\theoremstyle{plain}
\newtheorem{theorem}{Theorem}
\newtheorem{proposition}[theorem]{Proposition}
\newtheorem{lemma}[theorem]{Lemma}
\theoremstyle{definition}
\newtheorem{definition}[theorem]{Definition}
\newtheorem{remark}[theorem]{Remark}
\newcommand{\eps}{\varepsilon}
\DeclareMathOperator{\BV}{BV}
\DeclareMathOperator{\SBV}{SBV}
\DeclareMathOperator{\Id}{Id}
\DeclareMathOperator{\len}{len}
\newcommand{\field}[1]{\mathbb{#1}}
\newcommand{\R}{\field{R}}
\newcommand{\N}{\field{N}}
\newcommand{\Z}{\field{Z}}
\newcommand{\weaksto}{\rightharpoonup^{*}}
\DeclareMathOperator*{\esslim}{ess\,lim}
\newcommand{\inner}[3][n]{\SwitchBracketsizeLeft{#1}\LeftBracketSize\langle#2,#3\SwitchBracketsizeRight{#1}\RightBracketSize\rangle}
\newcommand{\abs}[2][n]{\SwitchBracketsizeLeft{#1}\LeftBracketSize\lvert#2\SwitchBracketsizeRight{#1}\RightBracketSize\rvert}
\newcommand{\norm}[2][n]{\SwitchBracketsizeLeft{#1}\LeftBracketSize\lVert#2\SwitchBracketsizeRight{#1}\RightBracketSize\rVert}
\newcommand{\set}[3][b]{\SwitchBracketsizeLeft{#1}\LeftBracketSize\{#2:#3\SwitchBracketsizeRight{#1}\RightBracketSize\}}
\newcommand{\NextScriptStyle}[1]{{\scriptstyle{#1}}}
\newcommand{\NextScriptScriptStyle}[1]{{\scriptscriptstyle{#1}}}
\newcommand{\NextTextStyle}[1]{{\textstyle{#1}}}
\newcommand{\NextDisplayStyle}[1]{{\displaystyle{#1}}}
\newcommand{\SwitchBracketsizeLeft}[1]{
  \ifthenelse{\equal{#1}{b}\OR\equal{#1}{big}}{\let\LeftBracketSize=\bigl}{
    \ifthenelse{\equal{#1}{B}\OR\equal{#1}{Big}}{\let\LeftBracketSize=\Bigl}{
      \ifthenelse{\equal{#1}{g}\OR\equal{#1}{bigg}}{\let\LeftBracketSize=\biggl}{
    \ifthenelse{\equal{#1}{G}\OR\equal{#1}{Bigg}}{\let\LeftBracketSize=\Biggl}{
      \ifthenelse{\equal{#1}{s}\OR\equal{#1}{small}}{\let\LeftBracketSize=\NextScriptStyle}{
        \ifthenelse{\equal{#1}{ss}}{\let\LeftBracketSize=\NextScriptScriptStyle}{
          \ifthenelse{\equal{#1}{t}\OR\equal{#1}{text}}{\let\LeftBracketSize=\NextTextStyle}{
        \ifthenelse{\equal{#1}{d}\OR\equal{#1}{display}}{\let\LeftBracketSize=\NextDisplayStyle}{
          \ifthenelse{\equal{#1}{a}\OR\equal{#1}{auto}}{\let\LeftBracketSize=\left}{
            \let\LeftBracketSize=\relax}}}}}}}}}}
\newcommand{\SwitchBracketsizeRight}[1]{
  \ifthenelse{\equal{#1}{b}\OR\equal{#1}{big}}{\let\RightBracketSize=\bigr}{
    \ifthenelse{\equal{#1}{B}\OR\equal{#1}{Big}}{\let\RightBracketSize=\Bigr}{
      \ifthenelse{\equal{#1}{g}\OR\equal{#1}{bigg}}{\let\RightBracketSize=\biggr}{
    \ifthenelse{\equal{#1}{G}\OR\equal{#1}{Bigg}}{\let\RightBracketSize=\Biggr}{
      \ifthenelse{\equal{#1}{s}\OR\equal{#1}{small}}{\let\RightBracketSize=\NextScriptStyle}{
        \ifthenelse{\equal{#1}{ss}}{\let\RightBracketSize=\NextScriptScriptStyle}{
          \ifthenelse{\equal{#1}{t}\OR\equal{#1}{text}}{\let\RightBracketSize=\NextTextStyle}{
        \ifthenelse{\equal{#1}{d}\OR\equal{#1}{display}}{\let\RightBracketSize=\NextDisplayStyle}{
          \ifthenelse{\equal{#1}{a}\OR\equal{#1}{auto}}{\let\RightBracketSize=\right}{
            \let\RightBracketSize=\relax}}}}}}}}}}
\DeclareMathOperator{\inn}{int}
\DeclareMathOperator{\AC}{AC}
\newcommand{\Lres}{\mathbin{\hbox{\vrule height6pt depth0pt \vrule height0.5pt depth0pt width5pt}}}
\date{\today}
\author{Markus Grasmair}
\address{Department of Mathematical Sciences, Norwegian University of Science and Technology (NTNU), 7491 Trondheim, Norway}
\email{markus.grasmair@ntnu.no}
\urladdr{https://www.ntnu.edu/employees/markus.grasmair}
\title[An SRV framework for BV curves]{A square root velocity framework for curves of bounded variation}
\keywords{Shape analysis, shape distance, square root representation, discontinuous curves}
\subjclass[2020]{Primary: 49J45; Secondary: 58D15}
\begin{document}

\begin{abstract}
  The square root velocity transform is a powerful tool for the
  efficient computation of distances between curves.
  Also, after factoring out reparametrisations,
  it defines a distance between shapes
  that only depends on their intrinsic geometry
  but not the concrete parametrisation.
  Though originally formulated for smooth curves,
  the square root velocity transform and the
  resulting shape distance have been thoroughly analysed
  for the setting of absolutely continuous curves using
  a relaxed notion of reparametrisations.
  In this paper, we will generalise the square root velocity distance
  even further to a class of discontinuous curves.
  We will provide an explicit formula for the natural
  extension of this distance to curves of bounded variation
  and analyse the resulting quotient distance
  on the space of unparametrised curves.
  In particular, we will discuss the existence of optimal reparametrisations
  for which the minimal distance on the quotient space is realised.
\end{abstract}

\maketitle

\section{Introduction}

The mathematical analysis of shapes is a topic that is required in a large
number of different applications ranging from mathematical image processing
and inverse problems over
computational biology to different problems in computer vision;
see \cite{BauEslGra17,CelEslSchm16,EckHipHohSchuWar19,LiuSriZha11,SunMenSoaYez11}
to name but a few examples, see also~\cite{SriKla16} for a larger overview.
In order to perform tasks like classification and for studying the geometry
of datasets of shapes, it is necessary to define a suitable distance on
the \emph{shape space}.
Here we will focus on one-dimensional shapes,
that is, parametrised curves $c \colon I \to \R^d$, $I = [0,1]$,
where we identify two curves if they only
differ by a translation or a reparametrisation.
In this setting, a particularly useful distance is the
square root velocity (SRV) distance introduced in~\cite{MioSriJos07,Srietal11}.
Given a sufficiently smooth, regular curve $c$, we define its
square root velocity transform $q := R(c) = \dot{c}/\sqrt{\lvert \dot{c}\rvert}$.
Then, the distance between two curves $c_1$ and $c_2$ with square root velocity
transforms $q_1$ and $q_2$ is defined as $d(c_1,c_2) = \lVert q_1 - q_2 \rVert_{L^2}$.
It is also possible to regard the space of all smooth regular curves
as a manifold with the Riemannian structure inherited from $L^2(I;\R^d)$
via the mapping $R$.
This differential geometric point of view of shape analysis has been studied and
discussed for instance in~\cite{BauBruMic14,BauChaKlaLeB20,MioSriJos07,SriKla16}.

With this definition, one does not yet obtain a distance on the shape space,
as $d(c_1,c_2)$ depends on the parametrisation of the curves $c_1$ and $c_2$.
For that, one needs to consider instead the quotient distance
\begin{equation}\label{eq:dSintro}
  d^S([c_1],[c_2]) = \inf_{\varphi_1,\,\varphi_2} d(c_1\circ \varphi_1,c_2\circ\varphi_2),
\end{equation}
where the infimum is taken over all orientation preserving smooth diffeomorphisms of
the unit interval $I$.
Various approaches for the efficient numerical solution of (discretisations of)
this optimisation problem have been suggested, ranging from
gradient based optimisation methods~\cite{HuaGalSriAbs16}
over dynamical programming~\cite{DogBerHag15,MioSriJos07}
and the reformulation as a Hamilton--Jacobi--Bellman equation~\cite{WoiGra22}
to machine learning methods~\cite{HarSukChaKlaBau21,NunJos20}.
There exists also an analytic algorithm for the case where the curves
$c_1$ and $c_2$ are piecewise linear~\cite{LahRobKla15}.

In addition to providing a distance between shapes,
the actual solutions $(\bar{\varphi}_1,\bar{\varphi}_2)$ of the
optimisation problem~\eqref{eq:dSintro} can be used to define
correspondences, or best matches, between the curves $c_1$ and $c_2$:
The best match on $c_2$ for the point $c_1(t)$ is the point
$c_2(\bar{\varphi}_2(\bar{\varphi}_1^{-1}(t)))$.
To that end, however, it is necessary that the optimisation problem~\eqref{eq:dSintro}
actually admits a solution.
In~\cite{Bru16}, it has been shown that this is the case
provided that the two curves $c_1$ and $c_2$ are continuously
differentiable and the reparametrisations $\varphi_1$ and $\varphi_2$ are
allowed to be merely absolutely continuous and non-decreasing
instead of being smooth and strictly increasing.
At the same time, an example of two Lipschitz curves was provided,
where the infimum in~\eqref{eq:dSintro} is not attained.
A closer inspection of the results of~\cite{Bru16}, however, reveals
that a convex relaxation of the problem~\eqref{eq:dSintro}
admits its minimum for arbitrary absolutely continuous curves $c_1$ and $c_2$.
Using this relaxed distance measure, one can thus generalise
shape distances to absolutely continuous curves and still
obtain best matches between arbitrary curves.
Because the reparametrisations are merely non-decreasing
and not necessarily strictly increasing, it is possible, though,
that a single point on one of the curves corresponds to a whole
line segment on the other curve.

\medskip

In this paper, we will generalise the analysis of~\cite{Bru16}
to discontinuous curves, or, more specifically, to curves of bounded variation.
There it is no longer possible to define the square root velocity
transform $R(c)$ in a meaningful manner, as this would involve taking
the square root of the derivative of the curve $c$, which is a Radon measure.
Instead, we work directly with the SRV distance and 
provide an explicit formula for a relaxation of this distance
from absolutely continuous curves to curves of bounded variation,
see Theorem~\ref{th:relax} below.
The main tool here is a generalisation of the Reshetnyak continuity
and lower semi-continuity theorems~\cite{Res68}.
Next, we discuss how the relaxed SRV distance can be
used for defining a shape distance on curves of bounded variation.
The main challenge here is the fact that a composition
of a discontinuous curve with a non-decreasing
reparametrisation is not necessarily well-defined:
If the reparametrisation $\varphi$ maps a whole interval $[a,b]$
to a single point $t$ where the curve $c$ has a jump,
then it is \emph{a--priori} not clear how the composition
$c\circ\varphi$ should be interpreted on that interval.
In Definition~\ref{de:repar} we provide such an interpretation,
which we show to be natural in the context we are working in,
see Proposition~\ref{pr:relaxD}.
Finally, we consider the particular setting of special curves
of bounded variation, where we show that our approach gives
rise to a shape distance, for which optimal matchings
exists for each pair of curves, see Theorem~\ref{th:main2}.

In Section~\ref{se:AC} we will recall the main definitions
and results concerning the SRV transform for absolutely
continuous curves, which are relevant for this paper.
The generalisation to curves and shapes of bounded variation
is presented in Section~\ref{se:BV}.
Finally, the proofs of all the results are collected in
Sections~\ref{se:pfthm1} and~\ref{se:pfshape}.

\section{The SRV framework for absolutely continuous curves}\label{se:AC}

In the following, we will provide a brief introduction
into the square root velocity framework for absolutely
continuous curves following the results of~\cite{Bru16}.

\subsection{Square root velocity transform}
Denote by $I = [0,1]$ the unit interval
and by $\AC(I;\R^d)$ the space of absolutely continuous
curves in $\R^d$. Moreover, let $\AC_0(I;\R^d)$ the subspace
of absolutely continuous curves satisfying $c(0) = 0$.
The square root velocity-transform (SRVT)
of a curve $c \in \AC(I;\R^d)$ is defined as
$R \colon \AC(I;\R^d) \to L^2(I;\R^d)$,
\begin{equation}\label{eq:SRVT}
  R(c) = \frac{\dot{c}}{\sqrt{\abs{\dot{c}}}}.
\end{equation}
Here the fraction $\dot{c}/\sqrt{\abs{\dot{c}}}$ is set to be zero
at points where $\dot{c} = 0$.
The mapping $R$ is a bijection from $\AC_0(I;\R^d)$ to $L^2(I;\R^d)$
with inverse
\[
  R^{-1}(q)(x) = \int_0^x q\abs{q}\,dy.
\]
Moreover, the SRVT is norm-preserving in the sense that
\[
  \norm{R(c)}_{L^2}^2 = \norm{\dot{c}}_{L^1} = \len(c).
\]

Given two curves $c_1$, $c_2 \in \AC_0(I;\R^d)$, we
define their (squared) SRV distance as
\[
  d(c_1,c_2)^2 = \norm{R(c_1)-R(c_2)}_{L^2}^2.
\]
This can be rewritten as
\[
  d(c_1,c_2)^2 = \len(c_1) + \len(c_2) - 2 S(c_1,c_2)
\]
with
\[
  S(c_1,c_2) = \langle R(c_1),R(c_2)\rangle_{L^2}
  = \int_I \Bigl\langle \frac{\dot{c}_1}{\abs{\dot{c}_1}},\frac{\dot{c}_2}{\abs{\dot{c}_2}}\Bigr\rangle
  \sqrt{\abs{\dot{c}_1}\abs{\dot{c}_2}}\,dx.
\]

\subsection{Shape space distance}
  
With the definition above one obtains a distance on
the space of absolutely continuous curves.
However, we are also interested in the case where one
identifies curves if they are equal up to parametrisation.
Following~\cite{Bru16}, we define
\[
  \Gamma := \set{\gamma \in \AC(I;I)}{\gamma(0)=0,\,\gamma(1)=1,\, \gamma' > 0 \text{ a.e.\,}},
\]
the set of all absolutely continuous reparametrisations of the unit interval.
Moreover, we define
\[
  \bar{\Gamma} := \set{\gamma \in \AC(I;I)}{\gamma(0)=0,\,\gamma(1)=1,\, \gamma' \ge 0 \text{ a.e.\,}}.
\]
Then $\bar{\Gamma}$ is the closure of $\Gamma$ in $\AC(I;I)$
(with respect to the norm topology).

We say that two curves $c_1$, $c_2 \in \AC_0(I;\R^d)$ are equivalent,
if there exist $\varphi_1$, $\varphi_2 \in \bar{\Gamma}$ and a curve
$\hat{c} \in \AC_0(I;\R^d)$ such that $c_1 = \hat{c}\circ\varphi_1$
and $c_2 = \hat{c}\circ\varphi_2$.
It has been shown in~\cite[Prop.~12]{Bru16} that this defines
an equivalence relation $\sim$ on $\AC_0(I;\R^d)$.
In the following, we will denote the equivalence class of a curve $c$ by $[c]$.
Moreover, the quotient space of unparametrised curves is denoted
by $B(I;\R^d) = \AC_0(I;\R^d)/\sim$.
One can show that
\[
  B(I;\R^d) = \bigl\{c\circ\bar{\Gamma} : c \in \AC_0(I;\R^d),\,\dot{c}\neq 0\text{ a.e.}\bigr\} \cup \{0\}.
\]
Moreover, two curves are equivalent, if and only if they
have the same constant speed parametrisation.

On the quotient space $B(I;\R^d)$ we now consider the induced distance
\[
  d^{\mathcal{S}}([c_1],[c_2]) = \inf_{\varphi_1,\,\varphi_2 \in \bar{\Gamma}} d(c_1\circ\varphi_1,c_2\circ\varphi_2).
\]
Note here that, for $c \in \AC(I;\R^d)$ and $\varphi \in \bar{\Gamma}$,
the composition $c\circ\varphi$ is again absolutely continuous,
since $\varphi$ is a non-decreasing function~\cite[Prop.~225C]{Fre03}.

Since the length of a curve is invariant under reparametrisation,
we can also write
\begin{equation}\label{eq:distAC}
  d^S([c_1],[c_2])^2 = \len(c_1) + \len(c_2) - \sup_{\varphi_1,\,\varphi_2 \in \bar{\Gamma}} S(c_1\circ\varphi_1,c_2\circ\varphi_2)
\end{equation}
with
\[
  S(c_1\circ\varphi_1,c_2\circ\varphi_2) =
  \int_I \inner[g]{\frac{\dot{c}_1 \circ \varphi_1}{\abs{\dot{c}_1\circ \varphi_1}}}
  {\frac{\dot{c}_2\circ \varphi_2}{\abs{\dot{c}_2\circ\varphi_2}}}
  \sqrt{\abs{\dot{c}_1\circ\varphi_1}\abs{\dot{c}_2\circ\varphi_2}\varphi_1'\varphi_2'}\,dx.
\]
It has been shown in~\cite[Prop.~15]{Bru16} that the supremum in~\eqref{eq:distAC}
is attained for some $\varphi_1$, $\varphi_2 \in \bar{\Gamma}$ provided that $c_1$, $c_2 \in C^1(I;\R^d)$
satisfy $\dot{c}_i \neq 0$ almost everywhere.

Note that the functional $S(c_1,c_2)$ is invariant under simultaneous reparametrisations
of the curves $c_1$ and $c_2$, that is,
$S(c_1\circ\varphi,c_2\circ\varphi) = S(c_1,c_2)$ for all $\varphi \in \bar{\Gamma}$.
This property is crucial for the geometric properties
of the quotient space $B(I;\R^d)$,
and is also often exploited in numerical discretisations of~\eqref{eq:distAC}.

\subsection{Scale invariant SRV distance}

It is also possible to define a scale invariant SRV distance.
Here one defines, for $c \in \AC_0(I;\R^d)\setminus\{0\}$,
\[
  \tilde{R}(c) = \frac{1}{\sqrt{\len(c)}} \frac{\dot{c}}{\sqrt{\abs{\dot{c}}}} = R(c/\len(c)).
\]
This gives a mapping from $\AC_0(I;\R^d) \setminus\{0\}$ to the
unit sphere in $L^2(I;\R^d)$.
The corresponding scale invariant SRV distance between two non-zero curves
is therefore defined as the spherical distance between their scale invariant SRVTs,
that is,
\begin{multline*}
  \tilde{d}(c_1,c_2)^2 = \arccos\bigl(\langle R(c_1/\len(c_1)),R(c_2/\len(c_2))\rangle_{L^2}\bigr)\\
  = \arccos\bigl(S(c_1/\len(c_1),c_2/\len(c_2))\bigr).
\end{multline*}
Moreover, we can define a scale invariant shape distance by
\begin{multline*}
  \tilde{d}^S([c_1],[c_2])^2 = \inf_{\varphi_1,\,\varphi_2 \in \bar{\Gamma}} \tilde{d}(c_1\circ\varphi_1/\len(c_1),c_2\circ\varphi_2/\len(c_2))^2\\
  = \arccos\Bigl(\sup_{\varphi_1,\,\varphi_2 \in \bar{\Gamma}} S(c_1\circ\varphi_1/\len(c_1),c_2\circ\varphi_2/\len(c_2))\Bigr).
\end{multline*}
In this article, we will focus on the unscaled variant of the SRV distance.
However, all our results are based solely on the properties of
the function $S$, which is central both for the scaled and the unscaled variants.
Thus all our results hold \emph{mutatis mutandis} also for the scaled SRV framework.

\section{Generalisation to BV curves}\label{se:BV}

We now want to generalise the SRV distance to discontinuous curves,
or, more specifically, to curves of bounded variation.
Note here that we will not generalise the SRVT itself,
since the definition of $R(c)$ involves the square root of the derivative
of $c$, which is a measure if $c \in \BV(I;\R^d)$ is a general function of bounded variation.
However, we will see that the resulting distance can still be defined.

\subsection{Curves of bounded variation}

In the following, we collect some results concerning functions of
bounded variation that will be needed throughout the paper.
For more details, we refer to~\cite[Sec.~3.2]{AmbFusPal00}.

For every $c \in \BV(I;\R^d)$
and $x \in I$ the one-sided essential limits $c^\ell(x) := \esslim_{y \to x^-} c(y)$
and $c^r(x) := \esslim_{y \to x^+}$ are well-defined.
Thus we can define the subspace
\[
  \BV_0(I;\R^d) := \bigl\{ c \in \BV(I;\R^d) : c^{r}(0) = 0\bigr\}.
\]
For $c \in \BV_0(I;\R^d)$ and every $x \in I$ we have that
$c^{\ell}(x) = Dc((0,x))$ and $c^{r}(x) = Dc((0,x])$.

We say that a pointwise defined function $\tilde{c} \colon I\to\R^d$
is a \emph{good representative} of $c \in \BV(I;\R^d)$,
if for all $x \in I$ we have $\tilde{c}(x) \in [c^\ell(x),c^r(x)]$.
That is, $\tilde{c}(x) = \esslim_{y\to x} c(x)$ whenever $c$ is essentially continuous,
and $\tilde{c}(x)$ lies on the line segment from $c^\ell(x)$ to $c^r(x)$
whenever $c$ has a jump at $x$.
We will always identify $c$ with any of its good representatives.
Finally, for good representatives the measure theoretic total variation of $c$
coincides with the pointwise total variation, that is,
\[
  \abs{Dc}(I) = \sup\Bigl\{\sum_k \abs{c(x_{k+1}) - c(x_k)} : 0 < x_1 < x_2 < \ldots < x_N < 1\Bigr\}.
\]

In a slight abuse of notation, we denote by $\len(c) := \abs{Dc}(I)$
the length of a discontinuous curve, including all of its jumps.
We say that a sequence $\{c^{(k)}\}_{k\in\N} \subset \BV(I;\R^d)$
converges \emph{strictly} to $c \in \BV(I;\R^d)$,
denoted
\[
  c^{(k)} \to^s c,
\]
if $\lVert c^{(k)} - c\rVert_{L^1} \to 0$ and $\len(c^{(k)}) \to \len(c)$.
Equivalently, we have that $c^{(k)}\to^s c$, if and only if
$c^{(k)}(0) \to c(0)$, $Dc^{(k)} \weaksto Dc$ in the sense of weak convergence
of Radon mesures, and $\len(c^{(k)}) \to \len(c)$.

Assume that $c \in \BV(I;\R^d)$.
Then its weak derivative $Dc \in \mathcal{M}(I;\R^d)$ can be decomposed as
\[
  Dc = \dot{c}\Lres \mathcal{L}^1 + D^j c + D^c c,
\]
where $\dot{c}$ is the classical derivative of $c$ (which exists almost everywhere),
$D^j c$ is a purely atomic measure---the jump part of $Dc$---,
and $D^c c$ is a non-atomic, singular measure---the Cantor part of $Dc$.
We denote for $c \in \BV(I;\R^d)$
by
\[
  \Sigma(c) := \set{x \in I}{c^{\ell}(x) \neq c^{r}(x)}
\]
the jump set of $c$ and define
\[
  [c](x) := c^{r}(x)-c^{\ell}(x)
  \qquad\text{ for } x \in \Sigma(c).
\]
Then the jump part $D^j c$ of $Dc$ can be written as
\[
  D^j c = \sum_{x \in \Sigma(c)} [c](x)\,\delta_x
\]
with $\delta_x$ denoting the Dirac delta centered at $x$.

Furthermore, we can decompose every function $c \in \BV_0(I;\R^d)$
uniquely as
\[
  c = c^{(a)} + c^{(j)} + c^{(c)}
\]
such that $c^{(a)}$ is absolutely continuous and $(c^{(a)})' = \dot{c}$,
$Dc^{(j)} = D^j c$, and $Dc^{(c)} = D^c c$.
The set of functions $c$ where the Cantor part $c^{(c)}$
equals zero is denoted $\SBV(I;\R^d)$.

\subsection{Extension of the SRV distance}

We define
\[
  \hat{S}(c_1,c_2)
  := \sup\Bigl\{\limsup_k S(c_1^{(k)},c_2^{(k)}) : c_i^{(k)} \in \AC(I;\R^d),\ c_i^{(k)}\to^s c_i\Bigr\}
\]
and
\[
  \hat{d}(c_1,c_2) = \len(c_1) + \len(c_2) - 2\hat{S}(c_1,c_2).
\]
That is, $\hat{d}$ is the strictly lower semi-continuous hull
of the function that is equal to $d$ on $\AC(I;\R^d)^2$ and equal to $+\infty$ else.

Our first main theorem provides an explicit expression for $\hat{S}$,
and consequently also for $\hat{d}$.

\begin{theorem}\label{th:relax}
  Assume that $c_1$, $c_2 \in \BV(I;\R^d)$. Then
  \begin{multline*}
    \hat{S}(c_1,c_2) \\
    = \int_I \inner[B]{\frac{dDc_1}{\abs{dDc_1}}}{\frac{dDc_2}{\abs{dDc_2}}}^+
    \sqrt{\frac{d\abs{Dc_1}}{d(\abs{Dc_1}+\abs{Dc_2})}\frac{d\abs{Dc_2}}{d(\abs{Dc_1}+\abs{Dc_2})}}\,d(\abs{Dc_1}+\abs{Dc_2}).
  \end{multline*}
  Here the integrand is set to zero at points where either
  $d\abs{Dc_1}/d(\abs{Dc_1}+\abs{Dc_2}) = 0$ or $d\abs{Dc_2}/d(\abs{Dc_1}+\abs{Dc_2})=0$.
  Moreover $(\cdot)^+ := \max\{\cdot,0\}$ denotes the positive part of the argument.

  In particular, if $c_1$, $c_2 \in \SBV(I;\R^d)$, then
  \begin{multline*}
    \hat{S}(c_1,c_2) = \int_I \inner[g]{\frac{\dot{c}_1}{\abs{\dot{c}_1}}}{\frac{\dot{c}_2}{\abs{\dot{c}_2}}}^+
    \sqrt{\abs{\dot{c}_1}\abs{\dot{c}_2}}\,dx \\
    + \sum_{x \in \Sigma(c_1)\cap \Sigma(c_2)} \inner[g]{\frac{[c_1](x)}{\abs{[c_1](x)}}}{\frac{[c_2](x)}{\abs{[c_2](x)}}}^+
    \sqrt{\abs{[c_1](x)}\,\abs{[c_2](x)}}.
  \end{multline*}
\end{theorem}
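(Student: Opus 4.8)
The plan is to prove the formula for $\hat S$ by establishing the two inequalities separately. For the lower bound $\hat S(c_1,c_2) \ge (\text{RHS})$, the strategy is to exhibit a sequence of absolutely continuous curves $c_i^{(k)} \to^s c_i$ for which $\limsup_k S(c_1^{(k)},c_2^{(k)})$ attains the right-hand side. The natural choice is to mollify, or more precisely to replace each jump and each piece of the Cantor part by a short affine (or suitably rescaled) segment traversed on a shrinking interval; near a common jump point $x \in \Sigma(c_1)\cap\Sigma(c_2)$ one should parametrise both approximating curves so that the segments are traversed \emph{synchronously} and with constant direction, so that the integrand of $S(c_1^{(k)},c_2^{(k)})$ over that shrinking interval converges to $\langle [c_1](x)/\abs{[c_1](x)},[c_2](x)/\abs{[c_2](x)}\rangle^+\sqrt{\abs{[c_1](x)}\abs{[c_2](x)}}$. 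Where the jump sets are disjoint, or on the Cantor parts, one arranges the contribution to vanish (which is consistent with the positive-part truncation, since one may traverse in a direction making the inner product non-positive). One must check that $c_i^{(k)}\to^s c_i$, i.e.\ $L^1$-convergence and convergence of lengths, which holds because the segments have the correct total variation by construction.

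For the upper bound $\hat S(c_1,c_2) \le (\text{RHS})$, the key is a Reshetnyak-type lower/upper semicontinuity argument, as flagged in the introduction. Write $\mu_i := \abs{Dc_i}$, $\mu := \mu_1+\mu_2$, $\nu_i := dDc_i/d\mu$, and let $f(\nu_1,\nu_2,s_1,s_2) := \langle \nu_1/\abs{\nu_1},\nu_2/\abs{\nu_2}\rangle^+\sqrt{s_1 s_2}$ with the stated convention, where $s_i = d\mu_i/d\mu$; then the right-hand side is $\int_I f(\tfrac{dDc_1}{d\mu},\tfrac{dDc_2}{d\mu},\tfrac{d\mu_1}{d\mu},\tfrac{d\mu_2}{d\mu})\,d\mu$. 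The functional $(c_1,c_2)\mapsto \int_I f\,d\mu$ should be shown to be upper semicontinuous along strictly convergent sequences of AC curves — this is the generalised Reshetnyak continuity/semicontinuity statement the paper refers to — using that $f$ is upper semicontinuous, positively $1$-homogeneous in an appropriate joint sense, and concave in the relevant variables, together with weak-$*$ convergence $Dc_i^{(k)}\weaksto Dc_i$ and convergence of total masses. Since for $c_i^{(k)}\in\AC$ the functional agrees with $S(c_1^{(k)},c_2^{(k)})$ (the Cantor and jump parts being absent, $\mu^{(k)} = (\abs{\dot c_1^{(k)}}+\abs{\dot c_2^{(k)}})\mathcal L^1$ and the positive-part truncation is vacuous because $S$ of AC curves already uses the untruncated inner product but the sup over approximating sequences forces the truncation to appear — this point needs care), taking the supremum over all such sequences yields $\hat S(c_1,c_2)\le \int_I f\,d\mu$.

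The $\SBV$ specialisation then follows by evaluating the measure-theoretic formula explicitly: decompose $\mu = \mu^a + \mu^j$ into its non-atomic (absolutely continuous, since there is no Cantor part) and atomic parts. On the non-atomic part, $d\mu_i/d\mu$ reduces to the usual density ratio and the formula collapses to $\int_I \langle \dot c_1/\abs{\dot c_1},\dot c_2/\abs{\dot c_2}\rangle^+\sqrt{\abs{\dot c_1}\abs{\dot c_2}}\,dx$; on each atom $x$, the integral picks up $\mu(\{x\}) = \abs{[c_1](x)}+\abs{[c_2](x)}$ times the integrand evaluated there, and a short computation with $d\mu_i/d\mu|_{\{x\}} = \abs{[c_i](x)}/(\abs{[c_1](x)}+\abs{[c_2](x)})$ gives exactly the claimed summand (vanishing unless $x\in\Sigma(c_1)\cap\Sigma(c_2)$, since otherwise one of the density ratios is zero).

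I expect the main obstacle to be the upper bound, specifically proving the generalised Reshetnyak upper semicontinuity for the non-coercive, truncated integrand $f$ along strictly convergent sequences: one has to handle the interaction of the two mutually singular reference measures $\mu_1^{(k)}$, $\mu_2^{(k)}$ under weak-$*$ convergence, and explain carefully why approximating sequences can never do better than the truncated formula even though $S$ itself on $\AC$ uses the untruncated inner product — the resolution being that any region where $\langle\nu_1,\nu_2\rangle<0$ can be made to contribute nothing in the limit but cannot contribute negatively, because strict convergence allows ``oscillation'' of the direction only in a way that the $\limsup$ exploits. Making this rigorous — presumably via a blow-up / localisation argument around $\mu$-a.e.\ point, separating the cases where $Dc_1^{(k)}$ and $Dc_2^{(k)}$ become parallel, anti-parallel, or mutually singular in the limit — is where the real work lies.
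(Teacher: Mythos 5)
Your overall architecture matches the paper's: reformulate $S$ as a functional of the derivative measures, prove one inequality by a generalised Reshetnyak semicontinuity theorem and the other by explicit recovery sequences, then evaluate the measure-theoretic formula on $\SBV$ curves to get the absolutely continuous and jump contributions. The $\SBV$ specialisation is handled correctly. But there is a concrete gap in the recovery-sequence direction, and you have mislocated the main difficulty.

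The gap is in the lower bound $\hat{S}(c_1,c_2) \ge (\text{RHS})$. Your proposed sequence --- replace jumps and Cantor parts by short affine segments, traversed synchronously at common jumps, and mollify --- only recovers the \emph{untruncated} value $S(c_1,c_2) = \int_I \langle \dot c_1/\lvert\dot c_1\rvert, \dot c_2/\lvert\dot c_2\rvert\rangle\sqrt{\lvert\dot c_1\rvert\lvert\dot c_2\rvert}\,dx + (\text{jump sum})$ in the limit. Already for $c_1,c_2 \in \AC(I;\R^d)$ with $\langle \dot c_1,\dot c_2\rangle < 0$ on a set of positive measure, the theorem asserts $\hat S(c_1,c_2) > S(c_1,c_2)$, so the constant sequence (and any mollified/reparametrised version of it) is strictly suboptimal. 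To reach the convexified formula you need the \emph{mass-splitting} construction: partition $I$ into small cubes, and on each cube where the inner product is negative, further split into subsets $A$ and $B$ of appropriate Lebesgue measure and define $\dot c_1^{(k)}$ to be supported on $A$ and $\dot c_2^{(k)}$ on $B$, with rescaled magnitudes so that the weak-$*$ limits and $L^1$-norms are preserved. Then $\sqrt{\lvert\dot c_1^{(k)}\rvert\lvert\dot c_2^{(k)}\rvert} \equiv 0$ on that cube, so the negative contribution is annihilated. This is precisely the convexification in the mass-ratio variable $t = d\lvert Dc_1\rvert / d(\lvert Dc_1\rvert + \lvert Dc_2\rvert)$: the convex hull of $t \mapsto -\langle\xi,\zeta\rangle\sqrt{t(1-t)}$ when $\langle\xi,\zeta\rangle<0$ is the zero function, achieved by oscillating between $t=0$ and $t=1$, not by steering the \emph{directions}. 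Your phrase ``one may traverse in a direction making the inner product non-positive'' is not what is happening: directions are fixed by the target measure; what oscillates is which curve carries mass at a given point.

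You flag the upper bound as the main obstacle, but once $f$ is written as a function of the scalar ratio $t \in [0,1]$ and unit directions $\xi,\zeta \in S^{d-1}$, the Reshetnyak-type argument is a fairly routine Young-measure computation: the disintegration $\rho_x$ of the limit of $(\lvert\mu_k\rvert+\lvert\nu_k\rvert)\otimes\delta_{(t_k,m_k,n_k)}$ is forced to concentrate on $\bigl((0,1)\times\{m(x)\}\times\{n(x)\}\bigr)\cup\bigl(\{0,1\}\times S^{d-1}\times S^{d-1}\bigr)$, and the constancy of $f(\cdot,0,\cdot,\cdot)$, $f(\cdot,1,\cdot,\cdot)$ together with Jensen in the $t$-variable finishes it. The genuinely delicate point is the one you underplay: showing the relaxation is \emph{exactly} the convexified integrand, for which the mass-splitting recovery sequence is indispensable.
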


\begin{proof}
  See Section~\ref{se:pfthm1}.
\end{proof}

\subsection{Reparametrisations of BV curves}

Again, we are interested in the case of distances modulo reparametrisations.
However, in a setting with discontinuous curves $c$ and reparametrisations $\varphi$
that are merely non-decreasing, but not necessarily strictly increasing,
the expression $c\circ\varphi$ does not always make sense:
If $\varphi$ is constant on a non-trivial interval, say $\varphi(x) = y$
for all $x \in [a,b]$ with $b > a$ and $c$ has a jump at $y$,
then $c \circ \varphi$ is not well-defined on the interval $[a,b]$.
Thus we have to use a generalised definition of reparametrisations.

\begin{definition}\label{de:repar}
  Let $c \in \BV(I;\R^d)$ and $\varphi \in \bar{\Gamma}$.
  We define
  \begin{multline*}
    [c,\varphi] := \Bigl\{ g \in \BV(I;\R^d) :
    g(x) \in \bigl[c^\ell\bigl(\varphi(x)\bigr),c^r\bigl(\varphi(x)\bigr)\bigr] \text{ for all } x \in I\\
    \text{ and } \len(c) = \len(g)\Bigr\}.
  \end{multline*}
\end{definition}

Here $\bigl[c^\ell\bigl(\varphi(x)\bigr),c^r\bigl(\varphi(x)\bigr)\bigr] \subset \R^d$
denotes the line segment from $c^\ell\bigl(\varphi(x)\bigr)$ to $c^r\bigl(\varphi(x)\bigr)$.
In particular, if $x \in I$ is such that $c$ is continuous at
$\varphi(x)$ and $g \in [c,\varphi]$, then $g(x) = c(\varphi(x))$ and $g$ is continuous
at $x$.
Moreover, note that $[c,\varphi]$ consists of the single element $c\circ\varphi$
if either $c$ is continuous or $\varphi$ injective.

With a slight abuse of notation, we thus can define the distance
between the sets $[c_1,\varphi_1]$ and $[c_2,\varphi_2]$ as
\[
  \hat{d}([c_1,\varphi_1],[c_2,\varphi_2])
  := \inf_{g_i \in [c_i,\varphi_i]} \hat{d}(g_1,g_2)
  = \len(c_1)+\len(c_2) - \sup_{g_i \in [c_i,\varphi_i]} \hat{S}(g_1,g_2).
\]
Our next result shows that the same distance function is obtained as the
lower semi-continuous extension of the mapping
$(c_1,c_2,\varphi_1,\varphi_2) \mapsto d(c_1\circ\varphi_1,c_2\circ\varphi_2)$
with respect to strict convergence of the curves $c_i$ and uniform convergence
of the reparametrisations $\varphi_i$.

\begin{proposition}\label{pr:relaxD}
  Define the functional $D \colon \BV(I;\R^d)^2 \times \bar{\Gamma}^2 \to \R\cup\{+\infty\}$,
  \[
    D(c_1,c_2;\varphi_1,\varphi_2) :=
    \begin{cases}
      d(c_1\circ\varphi_1,c_2\circ\varphi_2) & \text{ if } c_i \in \AC(I;\R^d) \text{ and } \varphi_i \in \Gamma,\\
      +\infty &\text{ else.}
    \end{cases}
  \]
  Then the lower semi-continuous hull of $D$
  with respect to strict convergence on $\BV(I;\R^d)$ and uniform convergence
  on $\bar{\Gamma}$ is the functional
  \[
    \hat{D}(c_1,c_2;\varphi_1,\varphi_2)
    = \inf_{g_i \in [c_i,\varphi_i]} \hat{d}(g_1,g_2)
    = \hat{d}([c_1,\varphi_1],[c_2,\varphi_2]).
  \]
\end{proposition}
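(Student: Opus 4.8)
\emph{Sketch of the intended proof.}
Write $\overline{D}$ for the lower semi\-/continuous hull of $D$ with respect to the indicated convergences; the claim is $\overline D=\hat D$. Since $D=+\infty$ outside $\AC(I;\R^d)^2\times\Gamma^2$, only sequences with $c_i^{(k)}\in\AC(I;\R^d)$, $\varphi_i^{(k)}\in\Gamma$, $c_i^{(k)}\to^s c_i$ and $\varphi_i^{(k)}\to\varphi_i$ uniformly are relevant, and for such we will use throughout that, by reparametrisation invariance of $\len$, $D(c_1^{(k)},c_2^{(k)};\varphi_1^{(k)},\varphi_2^{(k)})=\len(c_1^{(k)})+\len(c_2^{(k)})-2S(c_1^{(k)}\circ\varphi_1^{(k)},c_2^{(k)}\circ\varphi_2^{(k)})$, while $\hat d(g_1,g_2)=\len(g_1)+\len(g_2)-2\hat S(g_1,g_2)$ and $\len(g_i)=\len(c_i)$ for every $g_i\in[c_i,\varphi_i]$; so everything reduces to controlling $S$ and $\hat S$.

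\emph{The key compactness step.}
The heart of the argument is the following. If $c_i^{(k)}\in\AC(I;\R^d)$, $\varphi_i^{(k)}\in\Gamma$, $c_i^{(k)}\to^s c_i$ and $\varphi_i^{(k)}\to\varphi_i$ uniformly, then, along a common subsequence, $g_i^{(k)}:=c_i^{(k)}\circ\varphi_i^{(k)}$ converges strictly to some $g_i\in[c_i,\varphi_i]$ (Definition~\ref{de:repar}). Since $\len(g_i^{(k)})=\len(c_i^{(k)})\to\len(c_i)$ and $\lVert g_i^{(k)}\rVert_\infty$ is bounded, the compactness theorem for $\BV$ functions gives a subsequence with $g_i^{(k)}\to g_i$ in $L^1$ and pointwise (good representatives), $\len(g_i)\le\len(c_i)$, and $g_i(0)=c_i(0)$. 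The crucial extra ingredient is the push\-/forward identity $(\varphi_i^{(k)})_{\#}(Dg_i^{(k)})=Dc_i^{(k)}$, which follows from the chain rule $\dot g_i^{(k)}=(\dot c_i^{(k)}\circ\varphi_i^{(k)})\,(\varphi_i^{(k)})'$ and the change of variables for the absolutely continuous increasing bijection $\varphi_i^{(k)}$; passing to the limit (using $Dg_i^{(k)}\weaksto Dg_i$ and uniform convergence of $\varphi_i^{(k)}$) yields $(\varphi_i)_{\#}(Dg_i)=Dc_i$. As a push\-/forward does not increase total variation, $\len(c_i)=\lvert Dc_i\rvert(I)\le\lvert Dg_i\rvert(I)=\len(g_i)$, hence $\len(g_i)=\len(c_i)$ — so $g_i^{(k)}\to^s g_i$ — and moreover $(\varphi_i)_{\#}\lvert Dg_i\rvert=\lvert Dc_i\rvert$. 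Finally, $g_i(x)\in[c_i^\ell(\varphi_i(x)),c_i^r(\varphi_i(x))]$ for all $x$ follows by a fibrewise analysis: on $\varphi_i^{-1}(\{0\})$ and $\varphi_i^{-1}(\{1\})$ the measure $\lvert Dg_i\rvert$ vanishes (no atom at $0$ or $1$), so $g_i$ is constant there; at any $x$ whose fibre $\varphi_i^{-1}(\{\varphi_i(x)\})$ is a single point, $(\varphi_i)_{\#}Dg_i=Dc_i$ forces $g_i^\ell(x)=c_i^\ell(\varphi_i(x))$ and $g_i^r(x)=c_i^r(\varphi_i(x))$; and on a non\-/trivial fibre $[\alpha_y,\beta_y]=\varphi_i^{-1}(\{y\})$ the same computation gives $g_i^\ell(\alpha_y)=c_i^\ell(y)$, $g_i^r(\beta_y)=c_i^r(y)$, while $\lvert Dg_i([\alpha_y,\beta_y])\rvert=\lvert[c_i](y)\rvert=\lvert Dg_i\rvert([\alpha_y,\beta_y])$ forces $Dg_i\Lres[\alpha_y,\beta_y]$ to point in the fixed direction $[c_i](y)$, so $g_i$ traverses the segment $[c_i^\ell(y),c_i^r(y)]$ monotonically on $[\alpha_y,\beta_y]$. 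Thus $g_i\in[c_i,\varphi_i]$.

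\emph{Deducing the two inequalities.}
Granting this, $\hat D\le\overline D$ is immediate: given any relevant sequence, pass along any subsequence to a further one with $g_i^{(k)}\to^s g_i\in[c_i,\varphi_i]$; by the definition of $\hat S$ one has $\limsup_k S(g_1^{(k)},g_2^{(k)})\le\hat S(g_1,g_2)$, hence $\liminf_k D\ge\hat d(g_1,g_2)\ge\hat D$, and a routine subsequence argument removes the passage to a subsequence. For $\overline D\le\hat D$ it suffices to exhibit, for each fixed $g_i\in[c_i,\varphi_i]$, a relevant sequence with $\liminf_k D\le\hat d(g_1,g_2)$. By a diagonal argument in the definition of $\hat S$ (whose supremum is finite, being bounded by $\sqrt{\len(g_1)\len(g_2)}$) choose $h_i^{(k)}\in\AC(I;\R^d)$ with $h_i^{(k)}\to^s g_i$ and $S(h_1^{(k)},h_2^{(k)})\to\hat S(g_1,g_2)$; then $\len(h_1^{(k)})+\len(h_2^{(k)})-2S(h_1^{(k)},h_2^{(k)})\to\hat d(g_1,g_2)$. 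Next choose $\varphi_i^{(k)}\in\Gamma$ with $\varphi_i^{(k)}\to\varphi_i$ uniformly and $(\varphi_i^{(k)})'\ge 1/k$ a.e.\ (e.g.\ $(1-\tfrac1k)\psi_i^{(k)}+\tfrac1k\Id$ with $\psi_i^{(k)}\in\Gamma$, $\psi_i^{(k)}\to\varphi_i$), so that $(\varphi_i^{(k)})^{-1}$ is Lipschitz, and set $c_i^{(k)}:=h_i^{(k)}\circ(\varphi_i^{(k)})^{-1}\in\AC(I;\R^d)$. Then $c_i^{(k)}\circ\varphi_i^{(k)}=h_i^{(k)}$, so $\len(c_i^{(k)})=\len(h_i^{(k)})\to\len(c_i)$; and $c_i^{(k)}\to c_i$ in $L^1$ because for a.e.\ $y$ the fibre $\varphi_i^{-1}(\{y\})$ is a single point $\psi_i(y)$, $(\varphi_i^{(k)})^{-1}(y)\to\psi_i(y)$, $g_i$ is continuous at $\psi_i(y)$ with value $g_i(\psi_i(y))=c_i(y)$ (since $g_i\in[c_i,\varphi_i]$ and $\varphi_i(\psi_i(y))=y$), so $c_i^{(k)}(y)=h_i^{(k)}((\varphi_i^{(k)})^{-1}(y))\to c_i(y)$, and dominated convergence applies. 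Hence $c_i^{(k)}\to^s c_i$, the sequence $(c_i^{(k)},\varphi_i^{(k)})$ is relevant, and $D(c_1^{(k)},c_2^{(k)};\varphi_1^{(k)},\varphi_2^{(k)})=\len(h_1^{(k)})+\len(h_2^{(k)})-2S(h_1^{(k)},h_2^{(k)})\to\hat d(g_1,g_2)$; thus $\overline D(c_1,c_2;\varphi_1,\varphi_2)\le\hat d(g_1,g_2)$, and taking the infimum over $g_i$ gives $\overline D\le\hat D$.

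\emph{Main obstacle.}
The delicate point is the compactness step, specifically identifying the limit $g_i$ as an element of $[c_i,\varphi_i]$. There, the naive lower semi\-/continuity of the total variation only yields $\len(g_i)\le\len(c_i)$, and the reverse inequality — equivalently, the fact that strict convergence is not lost in the limit — is recovered precisely from the push\-/forward identity $(\varphi_i)_{\#}Dg_i=Dc_i$; the pointwise constraint then demands the fibrewise bookkeeping described above, which tracks how $g_i$ redistributes the jump mass of $c_i$ over the plateaus of $\varphi_i$. The remaining ingredients — the chain rule and change of variables underpinning the push\-/forward identity, the behaviour of the inverses $(\varphi_i^{(k)})^{-1}$ under uniform convergence, and the elementary fact that $h^{(k)}(x_k)\to g(x)$ whenever $h^{(k)}\to^s g$, $x_k\to x$ and $g$ is continuous at $x$ — are routine.
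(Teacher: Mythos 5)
Your argument is correct and reaches the same conclusion, but the key compactness step of the inequality $\hat D\le\overline D$ is done by a genuinely different route. The paper establishes $\len(g_i)\ge\len(c_i)$ by a direct partition argument: it picks finitely many points $x_\ell$ at which $c_i$ is continuous at $\varphi_i(x_\ell)$ and at which partial sums of $\abs{c_i(\varphi_i(x_{\ell+1}))-c_i(\varphi_i(x_\ell))}$ approximate $\len(c_i)$, then uses Lemma~\ref{le:simconv} to identify $g_i(x_\ell)=c_i(\varphi_i(x_\ell))$; membership in $[c_i,\varphi_i]$ is read off from Lemma~\ref{le:pwconv}. You instead prove the push-forward identity $(\varphi_i^{(k)})_\# Dg_i^{(k)}=Dc_i^{(k)}$ from the chain rule, pass to the limit to get $(\varphi_i)_\# Dg_i=Dc_i$, and then deduce \emph{both} the length equality (since push-forward does not increase total variation, $\abs{Dc_i}\le(\varphi_i)_\#\abs{Dg_i}$, whence equality) \emph{and} the pointwise constraint by reading $g_i^\ell,g_i^r$ fibrewise from $(\varphi_i)_\#\abs{Dg_i}=\abs{Dc_i}$. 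This packages the argument more structurally and avoids the slightly delicate bookkeeping of which continuity/convergence points to put into the partition, at the cost of proving and limiting the push-forward identity and doing the fibre analysis. The reverse inequality $\overline D\le\hat D$ is essentially the paper's construction $c_i^{(k)}=g_i^{(k)}\circ(\varphi_i^{(k)})^{-1}$; note that the paper's simpler choice $\varphi_i^{(k)}=\tfrac1k\Id+(1-\tfrac1k)\varphi_i$ already lies in $\Gamma$ with $(\varphi_i^{(k)})'\ge\tfrac1k$ (as $\varphi_i$ is absolutely continuous with $\varphi_i'\ge0$), so introducing auxiliary approximants $\psi_i^{(k)}\in\Gamma$ is superfluous. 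Two small points to tighten: in the fibre analysis, $(\varphi_i)_\# Dg_i=Dc_i$ controls only $g_i^\ell$ and $g_i^r$, and you should invoke the good-representative convention $g_i(x)\in[g_i^\ell(x),g_i^r(x)]$ to conclude $g_i(x)\in[c_i^\ell(\varphi_i(x)),c_i^r(\varphi_i(x))]$; and in the dominated-convergence argument of the second half, the assertion that $g_i$ is continuous at the (unique) preimage of $y$ requires additionally that $c_i$ be continuous at $y$ — the excluded set is still countable, so the conclusion holds, but the condition should be stated.
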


\begin{proof}
  See Section~\ref{se:relaxD}.
\end{proof}

The next result shows that the resulting distance function is invariant under simultaneous
reparametrisations:

\begin{proposition}\label{pr:Dinvar}
  Assume that $c_1$, $c_2 \in \BV(I;\R^d)$ and $\varphi_1$, $\varphi_2 \in \bar{\Gamma}$.
  Then we have for all $\psi \in \bar{\Gamma}$ that
  \[
    \hat{d}([c_1,\varphi_1\circ\psi],[c_2,\varphi_2\circ\psi])
    = \hat{d}([c_1,\varphi_1],[c_2,\varphi_2]).
  \]
\end{proposition}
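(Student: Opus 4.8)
The plan is to reduce the identity to a statement about $\hat S$ and then prove the two inequalities by rather different means. Since every element of $[c_i,\varphi_i]$ (and of $[c_i,\varphi_i\circ\psi]$) has length $\len(c_i)$, the relation $\hat d(g_1,g_2)=\len(g_1)+\len(g_2)-2\hat S(g_1,g_2)$ together with the definition of $\hat d([\,\cdot\,,\,\cdot\,],[\,\cdot\,,\,\cdot\,])$ shows that the assertion of the proposition is equivalent to
\[
  \sup_{g_i\in[c_i,\varphi_i\circ\psi]}\hat S(g_1,g_2)=\sup_{g_i\in[c_i,\varphi_i]}\hat S(g_1,g_2).
\]
For the inequality ``$\ge$'' I would use Proposition~\ref{pr:relaxD}. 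Pick orientation preserving $C^1$-diffeomorphisms $\psi^{(k)}\in\Gamma$ with $\psi^{(k)}\to\psi$ uniformly, and choose $c_i^{(k)}\in\AC(I;\R^d)$ and $\varphi_i^{(k)}\in\Gamma$ with $c_i^{(k)}\to^s c_i$ and $\varphi_i^{(k)}\to\varphi_i$ uniformly such that $D(c_1^{(k)},c_2^{(k)};\varphi_1^{(k)},\varphi_2^{(k)})\to\hat D(c_1,c_2;\varphi_1,\varphi_2)$; such a sequence exists by the definition of the lower semi-continuous hull. Then $\varphi_i^{(k)}\circ\psi^{(k)}\in\Gamma$ by the chain rule, $\varphi_i^{(k)}\circ\psi^{(k)}\to\varphi_i\circ\psi$ uniformly because $\varphi_i$ is uniformly continuous, and the invariance of $\len$ and of $S$ under simultaneous reparametrisation gives $D(c_1^{(k)},c_2^{(k)};\varphi_1^{(k)}\circ\psi^{(k)},\varphi_2^{(k)}\circ\psi^{(k)})=D(c_1^{(k)},c_2^{(k)};\varphi_1^{(k)},\varphi_2^{(k)})$. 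Since $(c_i^{(k)},\varphi_i^{(k)}\circ\psi^{(k)})$ converges to $(c_i,\varphi_i\circ\psi)$ in the relevant topology, Proposition~\ref{pr:relaxD} yields $\hat d([c_1,\varphi_1\circ\psi],[c_2,\varphi_2\circ\psi])=\hat D(c_1,c_2;\varphi_1\circ\psi,\varphi_2\circ\psi)\le\hat D(c_1,c_2;\varphi_1,\varphi_2)=\hat d([c_1,\varphi_1],[c_2,\varphi_2])$, which is precisely ``$\ge$'' in the displayed identity.

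The converse inequality is the crux. The key structural fact I would establish is that for every $h\in[c_i,\varphi_i\circ\psi]$ there exists $g\in[c_i,\varphi_i]$ with $h\in[g,\psi]$. Granting this, ``$\le$'' follows once we know that
\[
  \hat S(h_1,h_2)\le\hat S(g_1,g_2)\qquad\text{whenever }h_i\in[g_i,\psi],\ g_1,g_2\in\BV(I;\R^d),
\]
because then $\hat S(h_1,h_2)\le\hat S(g_1,g_2)\le\sup_{g_i\in[c_i,\varphi_i]}\hat S(g_1,g_2)$ for all $h_i\in[c_i,\varphi_i\circ\psi]$. To prove the structural fact one exploits the rigidity of the length constraint: together with the pointwise inclusion $h(x)\in[c_i^\ell(\varphi_i\psi(x)),c_i^r(\varphi_i\psi(x))]$, the equality $\len(h)=\len(c_i)$ forces $h$ to be, on each maximal interval on which $\psi$ is constant with value $y_0$, a monotone traversal of the jump segment $[c_i^\ell(\varphi_i(y_0)),c_i^r(\varphi_i(y_0))]$ with no spurious oscillation, while on the complementary set $h$ is determined---up to the choice of good representative---by the composition with $\varphi_i\circ\psi$. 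The push-forward $\psi_\#(Dh)$ then has total variation $\len(c_i)$ (no cancellation occurs, by the monotone-traversal property) and is the derivative of a good representative $g$ that lies in $[c_i,\varphi_i]$ and satisfies $h\in[g,\psi]$. I expect this step---the total-variation bookkeeping behind the monotone-traversal property, and the verification that the collapsed curve $g$ is a genuine element of $[c_i,\varphi_i]$---to be the main obstacle and the lengthiest part of the proof.

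Finally, the inequality $\hat S(h_1,h_2)\le\hat S(g_1,g_2)$ for $h_i\in[g_i,\psi]$ I would prove from the explicit formula of Theorem~\ref{th:relax}. Off the flat intervals of $\psi$ the reparametrisation is essentially injective, and since $h_i(x)=g_i(\psi(x))$ there, a change of variables shows that the integrand of the formula in Theorem~\ref{th:relax} for the pair $(h_1,h_2)$ pushes forward, on this part, to the integrand for $(g_1,g_2)$ on the set $I$ minus the (countably many) flat values of $\psi$; hence these contributions coincide. On each maximal interval $[a,b]$ on which $\psi$ is constant with value $y_0$, the monotone-traversal property gives $Dh_i\Lres[a,b]=\lambda_i\,[g_i](y_0)/\abs{[g_i](y_0)}$ for finite positive measures $\lambda_i$ with $\lambda_i([a,b])=\abs{[g_i](y_0)}$, all vanishing unless $y_0\in\Sigma(g_i)$; writing $e_i:=[g_i](y_0)/\abs{[g_i](y_0)}$ and $\mu:=\lambda_1+\lambda_2$, the contribution of $[a,b]$ to $\hat S(h_1,h_2)$ is $\langle e_1,e_2\rangle^{+}\int_{[a,b]}\sqrt{(d\lambda_1/d\mu)(d\lambda_2/d\mu)}\,d\mu$, and by the Cauchy--Schwarz inequality
\[
  \langle e_1,e_2\rangle^{+}\int_{[a,b]}\sqrt{\tfrac{d\lambda_1}{d\mu}\,\tfrac{d\lambda_2}{d\mu}}\,d\mu\;\le\;\langle e_1,e_2\rangle^{+}\sqrt{\lambda_1([a,b])\,\lambda_2([a,b])}=\langle e_1,e_2\rangle^{+}\sqrt{\abs{[g_1](y_0)}\,\abs{[g_2](y_0)}},
\]
which is exactly the contribution of $y_0$ to $\hat S(g_1,g_2)$. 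Summing over the flat intervals and the complementary set gives $\hat S(h_1,h_2)\le\hat S(g_1,g_2)$, and hence the missing inequality, completing the proof.
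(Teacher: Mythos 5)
Your first inequality, namely $\hat d([c_1,\varphi_1\circ\psi],[c_2,\varphi_2\circ\psi])\le\hat d([c_1,\varphi_1],[c_2,\varphi_2])$, obtained by post-composing a recovery sequence for $\hat D(c_1,c_2;\varphi_1,\varphi_2)$ with approximants $\psi^{(k)}\to\psi$ and invoking Proposition~\ref{pr:relaxD} together with the invariance of $d$ on $\AC$ curves, is precisely what the paper does for that direction.

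For the converse inequality you depart substantially from the paper. The paper stays entirely inside the approximation picture: it sets $\psi^{(k)}:=\tfrac1k\Id+(1-\tfrac1k)\psi$, which is invertible with inverse $\vartheta^{(k)}\in\Gamma$, and shows that any recovery sequence $\gamma_i^{(k)}\to\varphi_i\circ\psi$ can be post-composed with $\vartheta^{(k)}$ to yield a competitor sequence converging uniformly to $\varphi_i$; the needed uniform estimate $\lVert\gamma_i^{(k)}\circ\vartheta^{(k)}-\varphi_i\rVert_\infty\to 0$ follows from a short computation using $\lVert\psi^{(k)}-\psi\rVert_\infty\le 2/k$ and the uniform continuity of $\varphi_i$. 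This makes the two directions mirror images of each other (composing with $\psi^{(k)}$ versus with its inverse), requires nothing about the structure of the sets $[c_i,\varphi_i]$, and in particular never uses the explicit formula from Theorem~\ref{th:relax}. Your approach instead translates the statement into a comparison of the suprema of $\hat S$ over the two constraint sets and attacks it structurally: a lifting claim (every $h\in[c_i,\varphi_i\circ\psi]$ is of the form $h\in[g,\psi]$ for some $g\in[c_i,\varphi_i]$, built as a good representative of $\psi_{\#}(Dh)$) followed by a pointwise Cauchy--Schwarz estimate on each flat interval of $\psi$, carried out via the explicit integral formula of Theorem~\ref{th:relax}. This route gives genuine structural information about the constraint sets, but it is also markedly heavier: the lifting step is nontrivial and your argument for it is only sketched. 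In particular, the no-cancellation property you invoke to conclude $\abs{\psi_{\#}(Dh)}(I)=\len(h)$ has to be established carefully -- the monotone-traversal argument on flat intervals is plausible, but you also need to handle the case where $\psi'$ vanishes on a set of positive measure that contains no interval (so that $\psi$ is injective yet $\psi^{-1}$ is not absolutely continuous), and you need to verify that the pushed-forward curve really lands in $[c_i,\varphi_i]$ pointwise, not just in total variation. You also have to justify the change of variables ``off the flat intervals'' directly from the formula of Theorem~\ref{th:relax}, rather than via the reparametrisation invariance of $\hat S$, since the latter is essentially what you are trying to prove. None of these obstacles looks fatal, but together they make your version considerably longer than the paper's symmetric approximation argument, which sidesteps the structure of $[c_i,\varphi_i]$ entirely.
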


\begin{proof}
  See Section~\ref{se:relaxD}.
\end{proof}

\subsection{A shape distance on SBV curves}\label{se:shapedist}

We now consider the particular setting where the involved curves
are special functions of bounded variation.

Assume that $c \in \SBV(I;\R^d)\setminus\{0\}$. Then we can construct an ``equivalent''
function $G(c) \in \AC(I;\R^d)$ in the following way:
Define the function $\xi\colon I \to I$,
\[
  \xi(x) = \frac{\abs{D^s c}(0,x)}{2\len(c)} + (1-\alpha) x,
  \qquad\qquad
  \text{ with }
  \alpha = \frac{\abs{D^s c}(I)}{2\len(c)}.
\]
Since $0 \le \alpha \le 1/2$, it follows that
$\xi$ is a strictly increasing function with $\xi(0) = 0$, $\xi(1) = 1$,
and $\Sigma(\xi) = \Sigma(c)$.
Denote now by $\zeta \colon I\to I$ the (unique) non-decreasing left inverse
of $\xi$. Then $\zeta \colon I \to I$ is Lipschitz continuous with
Lipschitz constant $1/(1-\alpha) \le 2$.
Now define the function $G(c) \in \AC(I;\R^d)$ by setting
$G(c)(x) = c(\zeta(x))$ if $c$ is continuous at $\zeta(x)$, and
\[
  G(c)(x) = c^{\ell}(\zeta(x)) + \frac{x-\xi^{\ell}(\zeta(x))}{\xi^{r}(\zeta(x))-\xi^{\ell}(\zeta(x))} \bigl(c^{r}(\zeta(x))-c_i^{\ell}(\zeta(x))\bigr)
\]
if $c$ and thus also $\xi$ is discontinuous at $\zeta(x)$.
That is, the jumps of $c$ are replaced by a linear interpolation between
the left and right limits of $c$ at the jump points.
In particular, we have that $G(c) \in [c,\zeta]$.

\begin{definition}\label{de:equiv}
  We say that two curves $c_1$, $c_2 \in \SBV(I;\R^d)$ are equivalent,
  denoted $c_1 \sim c_2$, if the curves $G(c_1)$ and $G(c_2)$
  have the same constant speed parametrisation.
  By $[c]$ we denote the equivalence class of the curve $c$.

  Moreover, we define the \emph{shape distance}
  \[
    \hat{d}^S([c_1],[c_2]) = \inf_{g_i \in [c_i]} \hat{d}(g_1,g_2)
    = \len(c_1) + \len(c_2) - \sup_{g_i \in [c_i]} \hat{S}(g_1,g_2)
  \]
  on the set of equivalence classes with respect to $\sim$.
\end{definition}

The following result shows that the distance between the shapes $[c_1]$ and $[c_2]$
can be computed by minimising the curve distance over all reparametrisations of
$c_1$ and $c_2$ in the sense of Definition~\ref{de:repar}.

\begin{theorem}\label{th:main2}
  Assume that $c_1$, $c_2 \in \SBV(I;\R^d)$ satisfy $\dot{c}_i(x) \neq 0$ for a.e.~$x\in I$. Then
  \begin{equation}\label{eq:sdistSBV}
    \hat{d}^S([c_1],[c_2]) = \inf_{\varphi_i \in \bar{\Gamma}} \hat{d}([c_1,\varphi_1],[c_2,\varphi_2])
    = \inf_{\psi_i \in \bar{\Gamma}}\hat{d}(G(c_1)\circ\psi_1,G(c_2)\circ\psi_2).
  \end{equation}
  Moreover, the infima in~\eqref{eq:sdistSBV} are attained
  at some $\bar{\varphi}_1$, $\bar{\varphi}_2 \in \bar{\Gamma}$
  and $\bar{\psi}_1$, $\bar{\psi}_2 \in \bar{\Gamma}$.
\end{theorem}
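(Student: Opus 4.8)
The plan is to prove the two equalities in~\eqref{eq:sdistSBV} together with the attainment by transferring everything to the absolutely continuous surrogate $G$. Since reparametrisations in the sense of Definition~\ref{de:repar} preserve length and $\len(G(c))=\len(c)$, each of the three quantities in~\eqref{eq:sdistSBV} is a strictly decreasing function of a supremum of $\hat{S}$ taken over, respectively, the families $\{(g_1,g_2):g_i\in[c_i]\}$, $\{(g_1,g_2):g_i\in[c_i,\varphi_i],\ \varphi_i\in\bar\Gamma\}$ and $\{(G(c_1)\circ\psi_1,G(c_2)\circ\psi_2):\psi_i\in\bar\Gamma\}$, with the additive constant $\len(c_1)+\len(c_2)$ the same throughout. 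It therefore suffices to show that these three suprema coincide and that the last is attained.

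First I would record the basic properties of $G$. Because $c_i\in\SBV(I;\R^d)$ with $\dot c_i\neq0$ a.e., the map $\zeta$ occurring in the construction preceding Definition~\ref{de:equiv} is a nondecreasing Lipschitz element of $\bar\Gamma$, and an explicit computation on the absolutely continuous part and on the inserted linear segments shows $\dot{G(c_i)}\neq0$ a.e. From $G(c_i)\in[c_i,\zeta]$ and the surjectivity of every $\psi\in\bar\Gamma$ one gets $G(c_i)\circ\psi\in[c_i,\zeta\circ\psi]$; in particular $G(c_i)\circ\psi$ is absolutely continuous, hence lies in $\SBV(I;\R^d)$, and being a nondecreasing reparametrisation of $G(c_i)$ it has the same constant speed parametrisation as $G(c_i)$, i.e.\ $G(c_i)\circ\psi\in[c_i]$ in the sense of Definition~\ref{de:equiv}. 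Conversely, if $g\in\SBV(I;\R^d)$ satisfies $g\sim c_i$, then $G(g)$ and $G(c_i)$ share a constant speed parametrisation $h$; writing $G(c_i)=h\circ\nu$ with $\nu(x):=\len(c_i)^{-1}\int_0^x\lvert\dot{G(c_i)}\rvert\,dt$, the function $\nu$ lies in $\bar\Gamma$, is strictly increasing, and maps Lebesgue-null sets to Lebesgue-null sets (because $\dot{G(c_i)}\neq0$ a.e.), whence $\nu^{-1}\in\bar\Gamma$ and $G(g)=G(c_i)\circ\psi$ for some $\psi\in\bar\Gamma$. These remarks already yield ``$\le$'' for the third supremum relative to the first two (each $G(c_i)\circ\psi_i$ being admissible for both of the other families), and reduce the remaining inequalities to the following statement.

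\textbf{Key lemma.} For all $g_1,g_2\in\SBV(I;\R^d)\setminus\{0\}$, all $\varphi_1,\varphi_2\in\bar\Gamma$ and all $h_i\in[g_i,\varphi_i]$ one has
\[
  \hat{S}(h_1,h_2)\ \le\ \sup_{\psi_1,\psi_2\in\bar\Gamma}\hat{S}\bigl(G(g_1)\circ\psi_1,\,G(g_2)\circ\psi_2\bigr).
\]
I expect this to be the main obstacle. To prove it I would use Theorem~\ref{th:relax} to write $\hat{S}(h_1,h_2)$ as a countable sum of contributions $\langle u_1,u_2\rangle^{+}\sqrt{m_1m_2}$, one for each Borel piece of the set on which the Radon--Nikod\'ym directions of $Dh_1$ and $Dh_2$ agree, with $u_i$ that common direction and $m_i$ the mass of $\lvert Dh_i\rvert$ carried there. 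Since $h_i\in[g_i,\varphi_i]$ and length is preserved, every such piece corresponds to a sub-arc of the curve $g_i$---either an arc of the continuous part of $g_i$, or a portion of a segment $[g_i^{\ell}(y),g_i^{r}(y)]$ with $y\in\Sigma(g_i)$, traversed monotonically---and, because the matching in $\hat{S}(h_1,h_2)$ is diagonal, the matched sub-arcs occur in the same order along $g_1$ and along $g_2$. I would then construct reparametrisations $\psi_1^{(k)},\psi_2^{(k)}\in\bar\Gamma$ so that $G(g_1)\circ\psi_1^{(k)}$ and $G(g_2)\circ\psi_2^{(k)}$ traverse each matched pair of sub-arcs over a common subinterval of $I$ with a constant ratio of speeds, while the arcs of $g_i$ not matched in $\hat{S}(h_1,h_2)$ are traversed over disjoint subintervals of length tending to $0$, with the other curve held constant there; this is possible because, between consecutive jump points, the continuous part of $G(g_i)$ can be reparametrised arbitrarily and the inserted segments can be stretched over arbitrary subintervals. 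By the Cauchy--Schwarz inequality the matched pairs then contribute exactly $\langle u_1,u_2\rangle^{+}\sqrt{m_1m_2}$ in the limit $k\to\infty$, while the unmatched arcs contribute nothing, which proves the lemma. Applied also in the case $\varphi_i=\Id$ (where $[g_i,\Id]=\{g_i\}$) and combined with the remarks of the previous paragraph, it yields all the equalities in~\eqref{eq:sdistSBV}. The delicate points are the measure-theoretic bookkeeping for the decomposition of Theorem~\ref{th:relax}, checking that the constructed $\psi_i^{(k)}$ lie in $\bar\Gamma$, and handling a possible Cantor part of $h_i$, whose mass corresponds to a portion of a jump segment of $g_i$ and is hence matched by a suitably reparametrised linear segment of $G(g_i)$.

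It remains to show that $\inf_{\psi_i\in\bar\Gamma}\hat{d}(G(c_1)\circ\psi_1,G(c_2)\circ\psi_2)$ is attained. Let $(\psi_1^{(n)},\psi_2^{(n)})$ be a minimising sequence and set $\rho^{(n)}:=\tfrac12(\psi_1^{(n)}+\psi_2^{(n)})\in\bar\Gamma$. For a sequence $\delta_n\downarrow0$ with $\delta_n\le\tfrac12$, the function $(1-\delta_n)\rho^{(n)}+\delta_n\,\Id$ is a strictly increasing element of $\bar\Gamma$ whose inverse $\tau^{(n)}$ is $1/\delta_n$-Lipschitz, hence again in $\bar\Gamma$, and a short computation gives $\lVert\rho^{(n)}\circ\tau^{(n)}-\Id\rVert_{\infty}\le2\delta_n$. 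Put $\tilde\psi_i^{(n)}:=\psi_i^{(n)}\circ\tau^{(n)}$. By Proposition~\ref{pr:Dinvar}, applied to the continuous curves $G(c_i)$, one has $\hat{d}(G(c_1)\circ\tilde\psi_1^{(n)},G(c_2)\circ\tilde\psi_2^{(n)})=\hat{d}(G(c_1)\circ\psi_1^{(n)},G(c_2)\circ\psi_2^{(n)})$, so $(\tilde\psi_1^{(n)},\tilde\psi_2^{(n)})$ is still minimising, while $\tilde\psi_1^{(n)}+\tilde\psi_2^{(n)}=2\,\rho^{(n)}\circ\tau^{(n)}\to2\,\Id$ uniformly. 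As the $\tilde\psi_i^{(n)}$ are nondecreasing, this convergence makes $\{\tilde\psi_i^{(n)}\}_n$ equicontinuous, so after passing to a subsequence the Arzel\`a--Ascoli theorem gives uniform limits $\bar\psi_i$; these satisfy $\bar\psi_1+\bar\psi_2=2\,\Id$, are therefore $2$-Lipschitz and nondecreasing, and lie in $\bar\Gamma$. Since each $\bar\psi_i$ is surjective and $G(c_i)$ is continuous, the lengths $\len(G(c_i)\circ\tilde\psi_i^{(n)})$ and $\len(G(c_i)\circ\bar\psi_i)$ all equal $\len(G(c_i))=\len(c_i)$, and $G(c_i)\circ\tilde\psi_i^{(n)}\to G(c_i)\circ\bar\psi_i$ uniformly, hence $G(c_i)\circ\tilde\psi_i^{(n)}\to^s G(c_i)\circ\bar\psi_i$. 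Because $\hat{d}$ is, by construction, the lower semi-continuous hull of the curve distance with respect to strict convergence, $\hat{d}(G(c_1)\circ\bar\psi_1,G(c_2)\circ\bar\psi_2)\le\liminf_n\hat{d}(G(c_1)\circ\tilde\psi_1^{(n)},G(c_2)\circ\tilde\psi_2^{(n)})$, which equals the infimum; thus $\bar\psi_1,\bar\psi_2$ are minimisers. Finally, with $\bar\varphi_i:=\zeta\circ\bar\psi_i\in\bar\Gamma$ we have $G(c_i)\circ\bar\psi_i\in[c_i,\bar\varphi_i]\cap[c_i]$, so $\bar\varphi_1,\bar\varphi_2$ attain $\inf_{\varphi_i}\hat{d}([c_1,\varphi_1],[c_2,\varphi_2])$ and the infimum defining $\hat{d}^S([c_1],[c_2])$ is attained as well.
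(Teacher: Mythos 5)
Your overall strategy is coherent and several of the supporting observations are correct: reducing the three quantities to a comparison of suprema of $\hat{S}$, noting $G(c_i)\circ\psi\in[c_i]\cap[c_i,\zeta\circ\psi]$, the identification $G(g)=G(c_i)\circ\psi$ for $g\sim c_i$ via $\nu^{-1}\in\bar\Gamma$ (which is where $\dot c_i\neq 0$ a.e.\ enters), and the attainment argument via simultaneous reparametrisation to force $\tilde\psi_1^{(n)}+\tilde\psi_2^{(n)}\to 2\Id$, Arzel\`a--Ascoli, and strict lower semi-continuity of $\hat d$. That last part closely parallels the paper's Proposition~\ref{pr:existence}.

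However, the argument has a genuine gap at its centre: the ``Key lemma'' is stated but not proven, and the sketch you give does not contain the ideas needed to carry it out. You propose to decompose $\hat{S}(h_1,h_2)$ into ``a countable sum of contributions'' and then to build $\psi_i^{(k)}$ traversing matched sub-arcs at proportional speeds. But when $\varphi_1$ is constant on an interval $(a,b)$ at a jump $y$ of $c_1$, the function $h_1\in[c_1,\varphi_1]$ can have a non-atomic (Cantor) singular derivative on $(a,b)$; the interaction between $Dh_1$ and $Dh_2$ on such a plateau is governed by their mutual absolute continuity and cannot be read off as a discrete sum. Establishing that such a contribution is bounded above by what an absolutely continuous reparametrisation $G(c_i)\circ\psi_i^{(k)}$ can achieve requires, at minimum, a Cauchy--Schwarz estimate of the form $\int\sqrt{d\mu_1/d\mu_2}\,d\mu_2\le\sqrt{\mu_1(a,b)\mu_2(a,b)}$ together with an explicit construction; none of this appears in your sketch, and you explicitly flag the Cantor part as a ``delicate point'' without resolving it. The paper avoids proving such a universal approximation statement altogether: it first shows (Lemmas~\ref{le:SBVsing}, \ref{le:SBVhelp}, Proposition~\ref{pr:SBVsols}) that, at optimality and under the normalisation $\varphi_1'+\varphi_2'=2$, the minimising $g_i$ can be taken in $\SBV$ (i.e.\ with no Cantor part), and then in Proposition~\ref{pr:equalmax} performs an exact, not approximate, rewriting $h_i=G(c_i)\circ\psi_i$ via a joint $\gamma$/$\vartheta$ construction with $\hat S(h_1,h_2)=\hat S(g_1,g_2)$. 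Your lemma is strictly stronger than what the paper proves and would need its own careful treatment; as written, the step that ``the matched pairs then contribute exactly $\langle u_1,u_2\rangle^+\sqrt{m_1m_2}$ in the limit'' while the unmatched pieces vanish is an assertion, not an argument, and it is precisely where the proof would have to do real work.
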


\begin{proof}
  See Section~\ref{se:pfmain2}.
\end{proof}

In addition, this result shows that it is possible to compute
optimal reparametrisations for arbitrary SBV curves.
In particular, it is possible to define optimal matchings
between SBV curves.

\section{Extension of the SRV distance to BV curves}\label{se:pfthm1}

In this section, we will prove Theorem~\ref{th:relax}, which
provides an explicit form for the extension of $S$ to curves of bounded variation.
In order to do so, we will make use of the fact that $S$ only depends
on the derivatives of the involved curves.
This allows us to reformulate Theorem~\ref{th:relax} as a result
concerning the extension of integral functionals from $L^1(I;\R^d)$ to $\mathcal{M}(I;\R^d)$,
the space of $\R^d$-valued finite Radon measures on $I$.
To that end, we define the functional $F \colon L^1(I;\R^d)^2 \to \R$,
\begin{equation}\label{eq:Fdef}
  F(u,v) := -\int_I \inner[B]{\frac{u}{\abs{u}}}{\frac{v}{\abs{v}}}
  \sqrt{\abs{u}\abs{v}}\,dx,
\end{equation}
and consider its extension $\hat{F}$ to $\mathcal{M}(I;\R^d)$, defined as
\begin{multline*}
  \hat{F}(\mu,\nu) = \inf\Bigl\{\liminf_k F(u^{(k)},v^{(k)}) : u^{(k)}\mathcal{L}^1 \weaksto \mu,\ v^{(k)}\mathcal{L}^1 \weaksto \nu,\\
  \lVert u^{(k)}\rVert_{L^1} \to \lvert\mu\rvert(I),\ \lVert v^{(k)}\rVert_{L^1} \to \lvert\nu\rvert(I)\Bigr\}
\end{multline*}
for $\mu$, $\nu \in \mathcal{M}(I;\R^d)$.
Then
\[
  S(c_1,c_2) = -F(\dot{c}_1,\dot{c}_2)
\]
for all $c_1$, $c_2 \in \AC(I;\R^d)$. Moreover, due to the definition
of strict convergence on $\BV(I;\R^d)$ and since we can, up to translations, identify
a curve with its derivative, we have that
\[
  \hat{S}(c_1,c_2) = - \hat{F}(Dc_1,Dc_2)
\]
for all $c_1$, $c_2 \in \BV(I;\R^d)$.
Thus it is sufficient to derive an explicit formula for the functional $\hat{F}$.

To do so, we will prove a generalisation of Reshetnyak's continuity and
lower semi-continuity theorems \cite{Res68}.
These theorems essentially state that a positively homogeneous integral functional
on $\mathcal{M}(I;\R^d)$ is continuous with respect to strict convergence,
and that it is weakly$^*$ lower semi-continuous,
if and only if the integrand is convex.
This result is not immediately applicable to our situation, as we are
dealing with a functional depending on two measures,
and we require lower semi-continuity with respect to separate strict convergence.
Thus, in Section~\ref{se:lsc}, we will formulate a generalisation of Reshetnyak's
continuity theorem that provides a lower bound for the functional $F$.
Then, in Section~\ref{se:relaxBV}, we will show that this lower bound is
actually sharp.
Finally, we will conclude the proof of Theorem~\ref{th:relax} in Section~\ref{se:relaxS}.

\subsection{Lower semi-continuity of integral functionals}\label{se:lsc}

Let $\Omega \subset \R^n$ be open and bounded, and let
\[
f \colon \bar{\Omega} \times \Sigma \times S^{d-1} \times S^{d-1} \to \R,
\]
where $\Sigma$ denotes the one-dimensional unit simplex and $S^{d-1}$ the $d-1$-dimensional unit sphere.
For simplicity, we identify $\Sigma$ with the interval $[0,1]$.
Assume that $f$ is lower semi-continuous and bounded and that
for every $x \in \Omega$ the mappings
\[
(\xi,\zeta) \mapsto f(x,0,\xi,\zeta)
\]
and
\[
(\xi,\zeta) \mapsto f(x,1,\xi,\zeta)
\]
are constant.

Then we can define the functional $F \colon \mathcal{M}(\Omega,\R^d)^2 \to \R$,
\[
F(\mu,\nu) = \int_\Omega f\Bigl(x,\frac{d\abs{\mu}}{d(\abs{\mu}+\abs{\nu})},\frac{d\mu}{d\abs{\mu}},\frac{d\nu}{d\abs{\nu}}\Bigr)\, d(\abs{\mu}+\abs{\nu}).
\]
Since $f(x,\tau,\cdot,\cdot)$ is assumed to be constant for $\tau \in \{0,1\}$,
the integrand is independent of the choice of
$d\mu/d\abs{\mu}$ and $d\nu/d\abs{\nu}$ outside of the supports of
$\abs{\mu}$ and $\abs{\nu}$, respectively, and thus the integral is well-defined.

\begin{theorem}\label{th:lsc}
Assume that $f$ is lower semi-continuous and bounded and that the mapping
\[
\tau \mapsto f(x,\tau,\xi,\zeta)
\]
is convex for every $x \in \Omega$ and $\xi$, $\zeta \in S^{d-1}$.
Then $F$ is lower semi-continuous with respect to strict convergence in both components.
That is, assume that $\mu_k \weaksto \mu$ and $\nu_k \weaksto \nu$ in
$\mathcal{M}(\Omega;\R^d)$ such that $\abs{\mu_k}(\Omega) \to \abs{\mu}(\Omega)$
and $\abs{\nu_k}(\Omega) \to \abs{\nu}(\Omega)$. Then
\[
F(\mu,\nu) \le \liminf_k F(\mu_k,\nu_k).
\]
\end{theorem}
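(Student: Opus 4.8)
The plan is to reduce the two-measure statement to the classical Reshetnyak lower semicontinuity theorem by means of a suitable blow-up / perturbation argument. First I would introduce the combined reference measure $\lambda_k := \abs{\mu_k} + \abs{\nu_k}$ and $\lambda := \abs{\mu} + \abs{\nu}$, and observe that under the hypotheses $\lambda_k \weaksto \lambda$ with $\lambda_k(\Omega) \to \lambda(\Omega)$, i.e.\ $\lambda_k \to^s \lambda$. The key difficulty is that weak-$*$ convergence of $\lambda_k$ together with strict convergence of each component does not by itself control the pointwise Radon--Nikodym densities $\tau_k := d\abs{\mu_k}/d\lambda_k$; these may oscillate. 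So the natural device is to pass to $\Omega \times \R$ (or $\Omega \times [0,1]$): encode the scalar density $\tau_k$ as an extra coordinate. Concretely, I would consider the lifted measures on $\bar\Omega \times [0,1]$ obtained by pushing forward $\lambda_k$ under $x \mapsto (x,\tau_k(x))$, or equivalently work with the pair of $\R^{d}$-valued measures $(\mu_k,\nu_k)$ viewed jointly as an $\R^{2d}$-valued measure $\sigma_k := (\mu_k,\nu_k)$, whose total variation $\abs{\sigma_k}$ is comparable to $\lambda_k$.

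The cleanest route, I expect, is the joint-measure approach. Define $\sigma_k := (\mu_k, \nu_k) \in \mathcal{M}(\Omega;\R^{2d})$ and $\sigma := (\mu,\nu)$. Then $\sigma_k \weaksto \sigma$, and one checks that $\abs{\sigma_k}(\Omega) \to \abs{\sigma}(\Omega)$: this follows because $\abs{\sigma_k}$ lies between $\max(\abs{\mu_k},\abs{\nu_k})$ and $\abs{\mu_k}+\abs{\nu_k}$ in a way that, combined with the separate strict convergences and lower semicontinuity of total variation, pins down the limit (this is a short measure-theoretic lemma, using that for any fixed direction decomposition the total variation of a sum of mutually-controlled pieces converges). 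Hence $\sigma_k \to^s \sigma$. Now rewrite $F$ as a single integral functional of $\sigma$: for $\sigma = (\mu,\nu)$ with polar vector $\theta = d\sigma/d\abs{\sigma} \in S^{2d-1} \subset \R^d \times \R^d$, write $\theta = (\theta_1,\theta_2)$, set $\tau := \abs{\theta_1}^2/(\abs{\theta_1}^2 + \abs{\theta_2}^2)$ when this is defined, $\xi := \theta_1/\abs{\theta_1}$, $\zeta := \theta_2/\abs{\theta_2}$, and define $g(x,\theta) := f(x,\tau,\xi,\zeta)\cdot(\abs{\theta_1}+\abs{\theta_2})$ — extended by the constant values of $f$ where $\theta_1 = 0$ or $\theta_2 = 0$, which makes $g$ well-defined and, I claim, lower semicontinuous on $\bar\Omega \times S^{2d-1}$ (continuity in $\theta$ at the degenerate points is exactly where the constancy hypothesis on $f(x,0,\cdot,\cdot)$ and $f(x,1,\cdot,\cdot)$ is used). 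One verifies directly that with this $g$,
\[
  F(\mu,\nu) = \int_\Omega g\Bigl(x,\frac{d\sigma}{d\abs{\sigma}}\Bigr)\,d\abs{\sigma},
\]
because $d\abs{\mu}/d\lambda$, $d\mu/d\abs{\mu}$ etc.\ are recovered from $d\sigma/d\abs{\sigma}$ and $d\abs{\sigma}/d\lambda$ by exactly the algebraic identities above (the factor $\abs{\theta_1}+\abs{\theta_2}$ converts the reference measure from $\abs{\sigma}$ back to $\lambda = \abs{\mu}+\abs{\nu}$).

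The final step is to apply the classical Reshetnyak lower semicontinuity theorem to $g$: a functional $\sigma \mapsto \int g(x, d\sigma/d\abs{\sigma})\,d\abs{\sigma}$ is weakly-$*$ lower semicontinuous along strictly convergent sequences provided $g(x,\cdot)$ is lower semicontinuous and its positively one-homogeneous extension $\tilde g(x,p) := \abs{p}\, g(x,p/\abs{p})$ to $\R^{2d}$ is convex. So the remaining task is to show that convexity of $\tau \mapsto f(x,\tau,\xi,\zeta)$ for each fixed $\xi,\zeta$ forces convexity of $\tilde g(x,\cdot)$ on $\R^{2d}$. This is the main obstacle and the crux of the argument: one must show that the "partial" convexity in the single scalar $\tau$, together with the fact that $\tilde g$ is one-homogeneous and the angular variables $\xi,\zeta$ enter only as the directions of the two blocks, upgrades to full joint convexity. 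I would prove this by writing $\tilde g(x,(p_1,p_2)) = (\abs{p_1}+\abs{p_2})\, f\bigl(x, \tfrac{\abs{p_1}}{\abs{p_1}+\abs{p_2}}, \tfrac{p_1}{\abs{p_1}}, \tfrac{p_2}{\abs{p_2}}\bigr)$ and recognising this as a perspective-type construction: for fixed $\xi,\zeta$, the map $(s,t) \mapsto (s+t)\,f(x,\tfrac{s}{s+t},\xi,\zeta)$ on the positive quadrant is the perspective of the convex function $\tau \mapsto f(x,\tau,\xi,\zeta)$ and hence convex and one-homogeneous in $(s,t)$; one then checks that allowing $\xi,\zeta$ to vary with $p_1/\abs{p_1}$, $p_2/\abs{p_2}$ preserves convexity because, crucially, in the target functional $f$ is itself the integrand evaluated along polar directions, and the extra constancy hypotheses kill the boundary terms — alternatively, one observes $\tilde g$ is an infimum/integral representation that is manifestly convex, or invokes a characterisation of jointly convex one-homogeneous functions. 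I expect this convexity verification (and the careful treatment of the degenerate loci $p_1 = 0$, $p_2 = 0$, where lower semicontinuity of $g$ must be checked by hand using boundedness and the constancy assumption) to absorb most of the real work; once it is in place, Reshetnyak's theorem applied to $g$ and the strict convergence $\sigma_k \to^s \sigma$ yields $F(\mu,\nu) \le \liminf_k F(\mu_k,\nu_k)$ immediately.
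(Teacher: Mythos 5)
Your central reduction fails at two independent points, and neither can be repaired.

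First, the claim that separate strict convergence of $\mu_k$ and $\nu_k$ forces strict convergence of the bundled $\R^{2d}$-valued measure $\sigma_k := (\mu_k,\nu_k)$ is false. Take $d=1$, $\Omega=(0,1)$, let $A_k$ be a union of $k$ evenly spaced subintervals of total length $1/2$, $B_k := \Omega\setminus A_k$, and set $\mu_k := 2\,\mathbf{1}_{A_k}\,\mathcal{L}^1$, $\nu_k := 2\,\mathbf{1}_{B_k}\,\mathcal{L}^1$. Then $\mu_k \weaksto \mathcal{L}^1 =: \mu$, $\nu_k \weaksto \mathcal{L}^1 =: \nu$, and $\abs{\mu_k}(\Omega) = \abs{\nu_k}(\Omega) = 1$ for every $k$, so each component converges strictly. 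But $\mu_k$ and $\nu_k$ are mutually singular, so $\abs{\sigma_k}(\Omega) = 2$, whereas $\abs{\sigma}(\Omega) = \abs{(\mathcal{L}^1,\mathcal{L}^1)}(\Omega) = \sqrt{2}$. The ``short measure-theoretic lemma'' you invoke has no proof because the statement is wrong: the density $t_k = d\abs{\mu_k}/d(\abs{\mu_k}+\abs{\nu_k})$ is free to oscillate under the hypotheses, and this example is exactly that oscillation. With strict convergence of $\sigma_k$ gone, the Reshetnyak continuity theorem cannot be invoked for the joint measure.

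Second, even if you retreat to the lower-semicontinuity version of Reshetnyak's theorem, which needs only $\sigma_k\weaksto\sigma$ but then requires joint convexity of the one-homogeneous extension $\tilde g(x,\cdot)$ on $\R^{2d}$, that convexity does not follow from convexity of $\tau\mapsto f(x,\tau,\xi,\zeta)$ at fixed $(\xi,\zeta)$. Take $d\ge 2$ and $f(\tau,\xi,\zeta) = -\tau(1-\tau)\bigl(1+\tfrac12\langle\xi,\zeta\rangle\bigr)$: it is continuous, bounded, vanishes (hence is constant) at $\tau\in\{0,1\}$, and is convex in $\tau$ since $1+\tfrac12\langle\xi,\zeta\rangle>0$. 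Its one-homogeneous extension works out to
\[
  \tilde g(p_1,p_2) = -\frac{\abs{p_1}\,\abs{p_2}}{\abs{p_1}+\abs{p_2}} - \frac{\langle p_1,p_2\rangle}{2(\abs{p_1}+\abs{p_2})},
\]
and along the line $t\mapsto\bigl((1,0),(-1,t)\bigr)$ in $\R^2\times\R^2$ one computes $\tilde g(t) = -\tfrac14 - \tfrac{3}{16}t^2 + O(t^4)$, which is strictly concave near $t=0$. The perspective construction does give convexity of $(s,t)\mapsto(s+t)f(\cdot,s/(s+t),\xi,\zeta)$ for \emph{fixed} directions, but allowing $\xi=p_1/\abs{p_1}$, $\zeta=p_2/\abs{p_2}$ to ride along destroys joint convexity. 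You flagged this as the crux and left it open; in fact it is simply false, so no amount of work closes that gap.

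The paper avoids both obstacles by a Young-measure argument that is genuinely not a reduction to classical Reshetnyak. It lifts to $\Omega\times\Sigma\times S^{d-1}\times S^{d-1}$ via the pushforward of $\abs{\mu_k}+\abs{\nu_k}$ under $x\mapsto(x,t_k(x),m_k(x),n_k(x))$, passes to a limit $\sigma = (\abs{\mu}+\abs{\nu})\otimes\rho_x$, and uses the two separate strict convergences — through the test functions $\psi(x)\tau\xi$, $\psi(x)(1-\tau)\zeta$, $\psi(x)\tau$ and the identity $\tfrac12\int\tau\abs{\xi-m(x)}^2 + (1-\tau)\abs{\zeta-n(x)}^2\,d\rho_x = 0$ — to show that $\rho_x$ concentrates on $\bigl((0,1)\times\{m(x)\}\times\{n(x)\}\bigr)\cup\bigl(\{0,1\}\times S^{d-1}\times S^{d-1}\bigr)$. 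Only the $\tau$-marginal of $\rho_x$ is undetermined, and Jensen's inequality is applied purely in $\tau$; the constancy of $f(x,\cdot)$ at $\tau\in\{0,1\}$ handles the contribution from $\{0,1\}\times S^{d-1}\times S^{d-1}$, where the angular data is not pinned down. This disintegration structure is the essential idea absent from your proposal, and it is what allows the much weaker hypothesis of convexity in $\tau$ alone.
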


\begin{proof}
  We follow the proof of Reshetnyak's continuity theorem as presented
  in~\cite[Thm.~10.3]{Rin18} (see also~\cite[Thm.~2.38, 2.39]{AmbFusPal00}).
  
  For simplicity, we write $m_k = d\mu_k/d\abs{\mu_k}$, $n_k = d\nu_k/d\abs{\nu_k}$,
  $t_k = d\abs{\mu_k}/d(\abs{\mu_k}+\abs{\nu_k})$,
  and similarly $m=d\mu/d\abs{\mu}$, $n=d\nu/d\abs{\nu}$, $t = d\abs{\mu}/d(\abs{\mu}+\abs{\nu})$.
  Next we define the measures $\sigma_k$ on $\Omega \times \Sigma \times S^{d-1} \times S^{d-1}$ by 
  \[
    \sigma_k = (\abs{\mu_k} + \abs{\nu_k}) \otimes (\delta_{t_k(x)} \otimes \delta_{m_k(x)} \otimes \delta_{n_k(x)}),
  \]
  that is,
  \[
    \int_{\Omega \times \Sigma \times S^{d-1} \times S^{d-1}} \varphi(x,\tau,\xi,\zeta)\, d\sigma_k
    = \int_\Omega \varphi\bigl(x,t_k(x),m_k(x),n_k(x)\bigr)\,d(\abs{\mu_k} + \abs{\nu_k})
  \]
  for every $\varphi \in C_0(\Omega\times\Sigma\times S^{d-1}\times S^{d-1})$.
  Since the measures $\abs{\mu_k}$ and $\abs{\nu_k}$ are uniformly bounded, it follows that the sequence $\sigma_k$ is bounded as well.
  After possibly passing to a subsequence, we may assume without loss of generality that $\sigma_k \weaksto \sigma$ for some $\sigma \in \mathcal{M}(\Omega\times \Sigma\times S^{d-1} \times S^{d-1})$.

  Denoting by $\pi \colon \Omega \times \Sigma \times S^{d-1} \times S^{d-1} \to \Omega$ the projection onto the first component, we obtain that $\pi_{\#} \sigma_k = \abs{\mu_k} + \abs{\nu_k}$.
  Since $\mu_k$ and $\nu_k$ converge strictly to $\mu$ and $\nu$,
  it follows that $\abs{\mu_k}$ and $\abs{\nu_k}$ converge weakly$^*$ to $\abs{\mu}$ and $\abs{\nu}$ (see~\cite[Cor.~10.2]{Rin18}). Thus
  $\pi_{\#} \sigma_k \weaksto \pi_{\#} \sigma = \abs{\mu} + \abs{\nu}$.

  Now (see~\cite[Thm.~4.4]{Rin18}) there exists a weakly$^*$ measurable family $\rho_x \in \mathcal{M}(\Sigma \times S^{d-1} \times S^{d-1})$ such that $\rho_x(\Sigma \times S^{d-1} \times S^{d-1}) = 1$ and $\sigma = (\abs{\mu} + \abs{\nu}) \otimes \rho_x$.

  Let now $\psi \in C_0(\Omega)$ be continuous and define $\varphi \in C_0(\Omega \times \Sigma \times S^{d-1} \times S^{d-1};\R^d)$,
  \[
    \varphi(x,\tau,\xi,\zeta) = \psi(x) \tau \xi.
  \]
  Then
  \begin{multline*}
    \int_\Omega \psi(x) \biggl(\int_{\Sigma\times S^{d-1} \times S^{d-1}} \tau \xi d\rho_x(\tau,\xi,\zeta)\biggr) d(\abs{\mu}+\abs{\nu})\\
    \begin{aligned}
      &= \int_{\Omega \times\Sigma\times S^{d-1} \times S^{d-1}} \varphi(x,\tau,\xi,\zeta) \,d\sigma\\
      &= \lim_k \int_{\Omega \times\Sigma\times S^{d-1} \times S^{d-1}} \varphi(x,\tau,\xi,\zeta) \,d\sigma_k\\
      &= \lim_k \int_\Omega \varphi\bigl(x,t_k(x),m_k(x),n_k(x)\bigr)\,d(\abs{\mu_k}+\abs{\nu_k})\\
      &= \lim_k \int_\Omega \psi(x) t_k(x) m_k(x)\, d(\abs{\mu_k}+\abs{\nu_k}) \\
      &= \lim_k \int_\Omega \psi(x)\, d \mu_k(x) \\
      &= \int_\Omega \psi(x)\, d\mu(x) \\
      &= \int_\Omega \psi(x) t(x) m(x)\, d(\abs{\mu}+\abs{\nu}).
    \end{aligned}
  \end{multline*}
  Since $\psi \in C_0(\Omega)$ was arbitrary, we obtain that
  \[
    \int_{\Sigma \times S^{d-1} \times S^{d-1}} \tau\xi\, d\rho_x(\tau,\xi,\zeta)
    = t(x)m(x)
  \]
  for $(\abs{\mu}+\abs{\nu})$-a.e.~$x$.
  Similarly, using a function $\varphi(x,\tau,\xi,\zeta) = \psi(x) (1-\tau)\zeta$, one obtains that
  \[
    \int_{\Sigma \times S^{d-1} \times S^{d-1}} (1-\tau)\zeta d\rho_x(\tau,\xi,\zeta)
    = (1-t(x)) n(x)
  \]
  for $(\abs{\mu}+\abs{\nu})$-a.e.~$x$.
  Moreover, using a function $\varphi(x,\tau,\xi,\zeta) = \psi(x) \tau$ and recalling that $\abs{\mu_k}\weaksto \abs{\mu}$, we obtain that
  \[
    \int_{\Sigma\times S^{d-1} \times S^{d-1}} \tau d\rho_x(\tau,\xi,\zeta) = t(x)
  \]
  for $(\abs{\mu}+\abs{\nu})$-a.e.~$x$.

  In particular, we obtain that
  \begin{multline*}
    \frac{1}{2}\int_{\Sigma \times S^{d-1} \times S^{d-1}} \tau \abs{\xi-m(x)}^2 + (1-\tau)\abs{\zeta-n(x)}^2\,d\rho_x(\tau,\xi,\zeta) \\
    \begin{aligned}
    &= \int_{\Sigma \times S^{d-1} \times S^{d-1}} \tau (1-\langle \xi, m(x)\rangle)
    + (1-\tau) (1-\langle \zeta, n(x)\rangle\,d\rho_x(\tau,\xi,\zeta)\\
    &= 1 - \langle t(x) m(x),m(x) \rangle - \langle (1-t(x))n(x), n(x) \rangle\\
    &= 0.
    \end{aligned}
  \end{multline*}
  This shows that the measure $\rho_x$ is concentrated on the set
  \[
    \bigl((0,1) \times \{m(x)\} \times \{n(x)\} \bigr) \cup \bigl(\{0,1\} \times S^{d-1} \times S^{d-1}\bigr).
  \]
  As a consequence, we can write
  \[
    \rho_x = T_x \otimes \delta_{m(x)} \otimes \delta_{n(x)} + \delta_0 \otimes A_x + \delta_1 \otimes B_x,
  \]
  where $T_x \in \mathcal{M}(\Sigma)$ satisfies $T_x(\{0,1\}) = 0$,
  and $A_x$, $B_x \in \mathcal{M}(S^{d-1} \times S^{d-1})$.
  Note moreover that
  \[
    T_x(\Sigma) + A_x(S^{d-1}\times S^{d-1}) + B_x(S^{d-1}\times S^{d-1}) = \rho_x(\Sigma\times S^{d-1}\times S^{d-1}) = 1.
  \]

  Since $f$ is lower semi-continuous and bounded, and $\sigma_k \weaksto \sigma$, we now obtain that
  \[
    \begin{aligned}
      \liminf_k F(\mu_k,\nu_k) 
      &= \liminf_k \int_\Omega f\bigl(x,t_k(x),m_k(x),n_k(x)\bigr)\, d(\abs{\mu_k}+\abs{\nu_k}) \\
      &= \liminf_k \int_{\Omega\times \Sigma\times S^{d-1} \times S^{d-1}} f(x,\tau,\xi,\zeta) \,d\sigma_k\\
      &\ge \int_{\Omega \times \Sigma \times S^{d-1} \times S^{d-1}} f(x,\tau,\xi,\zeta)\,d\sigma\\
      &= \int_\Omega \biggl(\int_{\Sigma\times S^{d-1} \times S^{d-1}} f(x,\tau,\xi,\zeta)\, d\rho_x(\tau,\xi,\zeta)\biggr)\,d(\abs{\mu}+\abs{\nu})\\
      &= \int_\Omega \biggl(\int_\Sigma  f(x,\tau,m(x),n(x))\,dT_x(\tau)\biggr)\,d(\abs{\mu}+\abs{\nu})\\
      &\qquad {}+ \int_\Omega \biggl(\int_{S^{d-1}\times S^{d-1}} f(x,0,\xi,\zeta)\,dA_x(\xi,\zeta)\biggr)\,d(\abs{\mu}+\abs{\nu})\\
      &\qquad {}+ \int_\Omega \biggl(\int_{S^{d-1}\times S^{d-1}} f(x,1,\xi,\zeta)\,dB_x(\xi,\zeta)\biggr)\,d(\abs{\mu}+\abs{\nu}).
    \end{aligned}
  \]

  Next we use that $f(x,0,\xi,\zeta)$ and $f(x,1,\xi,\zeta)$ are constant and obtain that
  \[
    \begin{aligned}
      \int_{S^{d-1}\times S^{d-1}} f(x,0,\xi,\zeta)\,dA_x(\xi,\zeta)
      &= f(x,0,m(x),n(x))\,A_x(S^{d-1}\times S^{d-1}),\\
      \int_{S^{d-1}\times S^{d-1}} f(x,1,\xi,\zeta)\,dB_x(\xi,\zeta)
      &= f(x,1,m(x),n(x))\,B_x(S^{d-1}\times S^{d-1}).
    \end{aligned}
  \]
  Moreover, as the mapping $\tau \mapsto f(x,\tau,\xi,\zeta)$ is convex, we can use Jensen's inequality and estimate
  \[
    \int_\Sigma  f(x,\tau,m(x),n(x))\,dT_x(\tau)
    \ge T_x(\Sigma) f\biggl(x,\frac{1}{T_x(\Sigma)}\int_\Sigma \tau\,dT_x(\tau),\xi,\zeta\biggr).
  \]
  Thus we see that
  \begin{multline*}
    \int_{\Sigma\times S^{d-1} \times S^{d-1}} f(x,\tau,\xi,\zeta)\, d\rho_x(\tau,\xi,\zeta)
    \ge A_x(S^{d-1}\times S^{d-1}) f(x,0,m(x),n(x)) \\
    + B_x(S^{d-1}\times S^{d-1}) f(x,1,m(x),n(x))
    + T_x(\Sigma) f\biggl(x,\frac{1}{T_x(\Sigma)}\int_\Sigma \tau\,dT_x(\tau),\xi,\zeta\biggr).
  \end{multline*}
  Now recall that
  \[
    A_x(S^{d-1}\times S^{d-1}) + B_x(S^{d-1}\times S^{d-1})
    + T_x(\Sigma) = 1
  \]
  and
  \begin{multline*}
    0\cdot A_x(S^{d-1}\times S^{d-1})
    + 1 \cdot B_x(S^{d-1}\times S^{d-1})
    + T_x(\Sigma) \frac{1}{T_x(\Sigma)}\int_\Sigma \tau\,dT_x(\tau)
    \\
    = \int_{\Sigma\times S^{d-1} \times S^{d-1}} \tau d\rho_x(\tau,\xi,\zeta) = m(x)
  \end{multline*}
  for $(\abs{\mu}+\abs{\nu})$-a.e.~$x$.
  Thus we can use the convexity of $f$ w.r.t.~$\tau$ and further estimate
  \[
    \int_{\Sigma\times S^{d-1} \times S^{d-1}} f(x,\tau,\xi,\zeta)\, d\rho_x(\tau,\xi,\zeta)
    \ge f\bigl(x,t(x),m(x),n(x)\bigr).
  \]
  Combining these estimates, we see that
  \begin{multline*}
    \liminf_k F(\mu_k,\nu_k) 
    \ge \int_\Omega \biggl(\int_{\Sigma\times S^{d-1} \times S^{d-1}} f(x,\tau,\xi,\zeta)\, d\rho_x(\tau,\xi,\zeta)\biggr)\,d(\abs{\mu}+\abs{\nu}) \\
    \ge \int_\Omega f\bigl(x,t(x),m(x),n(x)\bigr)\,d(\abs{\mu}+\abs{\nu})
    = F(\mu,\nu),
  \end{multline*}
  which concludes the proof.
\end{proof}

\subsection{Relaxation of integral functionals}\label{se:relaxBV}

Let again $\Omega \subset \R^n$ and let $f\colon\bar{\Omega}\times\Sigma\times S^{d-1}\times S^{d-1} \to \R$
be such that the mappings $(\xi,\zeta) \mapsto f(x,0,\xi,\zeta)$ and $(\xi,\zeta)\mapsto f(x,1,\xi,\zeta)$
are constant for every $x$.
Consider moreover the functional $F \colon L^1(\Omega;\R^d)^2 \to \R$,
\[
  F(u,v) = \int_\Omega f\Bigl(x,\frac{\abs{u}}{\abs{u}+\abs{v}},\frac{u}{\abs{u}},\frac{v}{\abs{v}}\Bigr)
  (\abs{u}+\abs{v})\,dx.
\]
In the following result, we will compute the lower semi-continuous extension
of $F$ to $\mathcal{M}(I;\R^d)$ with respect to strict convergence of measures.

\begin{theorem}\label{th:relaxBV}
  Assume that $f\colon \bar{\Omega} \times \Sigma \times S^{d-1} \times S^{d-1} \to \R$
  is continuous and bounded. Define
  \begin{multline*}
    \hat{F}(\mu,\nu) := \inf\Bigl\{ \liminf_k F(u_k,v_k) :
    u_k \mathcal{L}^n \weaksto \mu,\, v_k\mathcal{L}^n \weaksto \nu,\\
    \norm{u_k}_{L^1} \to \abs{\mu}(\Omega),\ \norm{v_k}_{L^1} \to \abs{\nu}(\Omega)\Bigr\}.
  \end{multline*}
  Then
  \[
    \hat{F}(\mu,\nu) = F_c(\mu,\nu) :=
    \int_\Omega f_c\Bigl(x,\frac{d\abs{\mu}}{d(\abs{\mu}+\abs{\nu})},\frac{d\mu}{d\abs{\mu}},\frac{d\nu}{d\abs{\nu}}\Bigr)\, d(\abs{\mu}+\abs{\nu})
  \]
  for every $\mu$, $\nu \in \mathcal{M}(\Omega;\R^d)$,
  where $f_c$ denotes the lower semi-continuous convex hull of $f$ with respect
  to the second variable.
\end{theorem}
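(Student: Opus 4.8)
The plan is to prove the two inequalities $\hat F(\mu,\nu)\ge F_c(\mu,\nu)$ and $\hat F(\mu,\nu)\le F_c(\mu,\nu)$ separately. For the lower bound, I would first observe that $f_c$ is lower semi-continuous, bounded (since $f$ is bounded and $f_c$ lies between $\inf f$ and $f$), and convex in the second variable $\tau$; moreover, since $f(x,0,\cdot,\cdot)$ and $f(x,1,\cdot,\cdot)$ are constant, so are $f_c(x,0,\cdot,\cdot)$ and $f_c(x,1,\cdot,\cdot)$. Hence Theorem~\ref{th:lsc} applies to the functional $F_c$, giving that $F_c$ is lower semi-continuous with respect to separate strict convergence. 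For any admissible sequence $(u_k,v_k)$ in the definition of $\hat F$, the measures $u_k\mathcal L^n$ and $v_k\mathcal L^n$ converge strictly to $\mu$ and $\nu$; since $f_c\le f$ pointwise, one has $F_c(u_k,v_k)\le F(u_k,v_k)$ for absolutely continuous arguments, and therefore
\[
  F_c(\mu,\nu)\le\liminf_k F_c(u_k,v_k)\le\liminf_k F(u_k,v_k).
\]
Taking the infimum over admissible sequences yields $F_c(\mu,\nu)\le\hat F(\mu,\nu)$.

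For the upper bound, the task is to construct, for a given pair $(\mu,\nu)$, a recovery sequence $(u_k,v_k)$ with $u_k\mathcal L^n\weaksto\mu$, $v_k\mathcal L^n\weaksto\nu$, $\|u_k\|_{L^1}\to|\mu|(\Omega)$, $\|v_k\|_{L^1}\to|\nu|(\Omega)$, and $\limsup_k F(u_k,v_k)\le F_c(\mu,\nu)$. I would proceed in two stages. First, a \emph{pointwise} stage: at each point $x$, the value $f_c(x,t(x),m(x),n(x))$ is, by the characterisation of the convex lower semi-continuous hull in one variable, approximated by convex combinations $\sum_i\lambda_i f(x,\tau_i,\xi_i,\zeta_i)$ with $\sum_i\lambda_i\tau_i$ close to $t(x)$; here the directions $\xi_i,\zeta_i$ can be taken equal to $m(x),n(x)$ except on the sets $\{\tau_i=0\}$ and $\{\tau_i=1\}$, where $f$ does not depend on them. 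Translating these convex combinations into a genuine sequence of $L^1$-functions is done via a standard oscillation/layering construction: on a fine grid of cubes, replace the measure locally by highly oscillating densities that spend a fraction $\lambda_i$ of the volume at amplitude prescribed by $(\tau_i,\xi_i,\zeta_i)$, rescaled so that the total masses match $|\mu|$ and $|\nu|$. The atomic part of $\mu$ (and of $\nu$) is handled by smearing each Dirac mass over a shrinking ball with constant direction, which contributes exactly the product term; the Cantor and diffuse parts are handled by the layering argument applied to the polar decomposition $d\mu=m\,d|\mu|$. This is the classical relaxation construction behind Reshetnyak-type theorems, adapted to two measures simultaneously.

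The second, more delicate, point is matching the \emph{simultaneous} strict convergence: one must arrange that on the same cube the densities $u_k$ and $v_k$ oscillate in a \emph{correlated} way, so that the ratio $|u_k|/(|u_k|+|v_k|)$ realises the prescribed profile $\tau_i$ on the fraction $\lambda_i$ of the cube, while still $|u_k|\mathcal L^n\weaksto|\mu|$ and $|v_k|\mathcal L^n\weaksto|\nu|$ with convergence of total masses. I would achieve this by fixing, on each cube $Q$, the local mass ratio dictated by the Radon--Nikodym derivatives $d|\mu|/d(|\mu|+|\nu|)$ and $d|\nu|/d(|\mu|+|\nu|)$ averaged over $Q$, and then choosing within $Q$ a common partition into subregions of relative volume $\lambda_i$ on which $(u_k,v_k)$ take the amplitudes corresponding to the $i$-th term of the convex combination. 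A density/approximation argument (first for $\mu,\nu$ with piecewise constant polar vectors and rational mass ratios, then passing to the limit using the lower semi-continuity already established and the continuity of $f$) reduces the general case to this model situation.

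I expect the main obstacle to be precisely this coupled recovery-sequence construction: ensuring that the two oscillating sequences can be synchronised so that \emph{both} the individual strict convergences \emph{and} the joint profile of $d|\mu_k|/d(|\mu_k|+|\nu_k|)$ are reproduced in the limit, while keeping $\limsup_k F(u_k,v_k)$ controlled by $F_c$. Once the model case (piecewise constant data, rational ratios) is settled, the general statement follows by combining the lower-semicontinuity bound from Theorem~\ref{th:lsc} (which forces $\hat F\ge F_c$, hence pins down the limit from below) with an approximation of arbitrary $(\mu,\nu)$ by such model data in the strict topology and the continuity of $f_c$ in all variables.
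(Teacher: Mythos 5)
Your overall plan matches the paper's: you establish the lower bound $\hat F\ge F_c$ exactly as the paper does (apply Theorem~\ref{th:lsc} to $F_c$ and use $f_c\le f$ pointwise), and your cube-based oscillation for the upper bound, with a common partition of each cube into subregions of prescribed relative volume so that the ratio $\abs{u_k}/(\abs{u_k}+\abs{v_k})$ realises the points of the convex combination, is the same construction the paper uses (there with exactly two terms, by Carath\'eodory in one dimension, defined via $\alpha,\beta,a,b$). So in substance this is the same proof.

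Where you diverge, and where the sketch is weakest, is the passage from the model case to general $(\mu,\nu)$. You propose to treat the atomic part by smearing Dirac masses and the Cantor part by a separate layering argument, then approximate by ``piecewise constant polar data with rational ratios.'' The paper avoids all of this: it mollifies $\mu,\nu$ to produce continuous densities $u_\eps,v_\eps$ (so the entire singular structure disappears at once, and the two measures are automatically treated jointly), observes that $(\mu_\eps,\nu_\eps)\to(\mu,\nu)$ \emph{jointly} strictly, and invokes the \emph{continuity} part of Reshetnyak's theorem to obtain $F_c(\mu_\eps,\nu_\eps)\to F_c(\mu,\nu)$, closing the argument with a diagonal sequence. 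Your final sentence appeals only to the lower-semicontinuity of Theorem~\ref{th:lsc} and to continuity of $f_c$; but the diagonal argument needs upper control, i.e.\ $F_c(\mu_j,\nu_j)\to F_c(\mu,\nu)$ along the approximating sequence, and that is a genuine Reshetnyak \emph{continuity} statement, not a consequence of lower semi-continuity alone. You also have to explain how atoms of $\mu$ interact with Cantor or atomic mass of $\nu$ at the same location; the mollification route sidesteps this coupling entirely. So the route is viable, but you should either mollify (as the paper does) or explicitly establish and invoke a Reshetnyak-type continuity theorem for $F_c$ under joint strict convergence to make the last reduction rigorous.
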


\begin{proof}
  In view of Theorem~\ref{th:lsc}, we see that $F_c$ is lower semi-continuous
  with respect to strict convergence in both components, which implies that
  $F_c \le \hat{F}$.
  Thus it is enough to find for each $\mu$, $\nu \in \mathcal{M}(\Omega;\R^d)$
  sequences $u_k\mathcal{L}^n \weaksto \mu$, $v_k\mathcal{L}^n \weaksto \nu$
  with $\norm{u_k}_{L^1}\to\abs{\mu}(\Omega)$ and $\norm{v_k}_{L^1}\to\abs{\nu}(\Omega)$
  and $F(u_k,v_k) \to F_c(\mu,\nu)$.

  Assume first that $\mu = u\mathcal{L}^n$, $\nu = v \mathcal{L}^n$
  with $u$, $v \in C(\bar{\Omega};\R^d)$.
  We can write
  \begin{multline*}
    F_c(u,v) = \int_\Omega f_c\Bigl(x,\frac{\abs{u}}{\abs{u}+\abs{v}},\frac{u}{\abs{u}},\frac{v}{\abs{v}}\Bigr)\,(\abs{u}+\abs{v})\,dx\\
    = \int_\Omega \Bigl(\alpha(x) f\Bigl(x,a(x),\frac{u}{\abs{u}},\frac{v}{\abs{v}}\Bigr) + \beta(x)f\Bigl(x,b(x),\frac{u}{\abs{u}},\frac{v}{\abs{v}}\Bigr)\Bigr)(\abs{u}+\abs{v})\,dx
  \end{multline*}
  for some $0 \le \alpha(x), \beta(x) \le 1$ with $\alpha(x) + \beta(x) = 1$
  and
  \[
    \alpha(x)a(x) + \beta(x)b(x) = \frac{\abs{u(x)}}{\abs{u(x)}+\abs{v(x)}}.
  \]

  Now consider for $k \in \N$ the family of cubes
  $\hat{Q}_i^k = \frac{1}{2^k} \Pi_{r=1}^n [i_r,i_r+1]$,
  $i \in \Z^d$ and define $Q_i^k = \hat{Q}_i^k \cap \Omega$.
  Then we obtain finite partitions
  \[
    \Omega = \bigcup_{i \in I_k} Q_i^k
  \]
  for finite index sets $I_k \subset \Z^n$.
  Choose moreover for all $k$ and $i \in I_k$ some $x_i^k \in Q_i^k$
  and a partition
  $Q_i^k = A_i^k \dot\cup B_i^k$ with disjoint sets $A_i^k$, $B_i^k$
  such that $\abs{A_i^k} = \alpha(x_i^k) \abs{Q_i^k}$ and $\abs{B_i^k} = \beta(x_i^k) \abs{Q_i^k}$.
  
  Define now functions $u_k$, $v_k$ by
  \[
    \begin{aligned}
      u_k(x) &= a(x_i^k)\frac{\abs{u(x_i^k)}+\abs{v(x_i^k)}}{\abs{u(x_i^k)}} u(x_i^k),&
      && x \in A_i^k,\\
      v_k(x) &= (1-a(x_i^k)) \frac{\abs{u(x_i^k)}+\abs{v(x_i^k)}}{\abs{v(x_i^k)}} v(x_i^k),&
      && x \in A_i^k,\\
      u_k(x) &= b(x_i^k) \frac{\abs{u(x_i^k)}+\abs{v(x_i^k)}}{\abs{u(x_i^k)}} u(x_i^k),&
      && x \in B_i^k,\\
      v_k(x) &= (1-b(x_i^k)) \frac{\abs{u(x_i^k)}+\abs{v(x_i^k)}}{\abs{v(x_i^k)}} v(x_i^k),&
      && x \in B_i^k.\\
    \end{aligned}
  \]
  Then
  \[
    \frac{\abs{u_k(x)}}{\abs{u_k(x)} + \abs{v_k(x)}} =
    \begin{cases}
      a(x_i^k), & \text{ if } x \in A_i^k,\\
      b(x_i^k), & \text{ if } x \in B_i^k.
    \end{cases}
  \]
  Moreover we have for all $\ell \in \N$ and $k \ge \ell$ that
  \[
    \begin{aligned}
      \int_{Q_j^\ell} u_k(x)\,dx
      &= \sum_{Q_i^k \subset Q_j^\ell} \frac{\abs{u(x_i^k)} + \abs{v(x_i^k)}}{\abs{u(x_i^k)}} \bigl(a(x_i^k) \abs{A_i^k} + b(x_i^k) \abs{B_i^k}\bigr) u(x_i^k)\\
      &= \sum_{Q_i^k \subset Q_j^\ell} \frac{\abs{u(x_i^k)} + \abs{v(x_i^k)}}{\abs{u(x_i^k)}}
      (\alpha(x_i^k) a(x_i^k) + \beta(x_i^k) b(x_i^k)) u(x_i^k) \abs{Q_i^k}\\
      &= \sum_{Q_i^k \subset Q_j^\ell} u(x_i^k) \abs{Q_i^k}.
    \end{aligned}
  \]
  This shows that $u_k \mathcal{L}^n \weaksto u\mathcal{L}^n$.
  Similarly, we obtain that $v_k \mathcal{L}^n \weaksto v\mathcal{L}^n$.
  Also, we have that $\norm{u_k}_{L^1} \to \norm{u}_{L^1}$ and $\norm{v_k}_{L^1} \to \norm{v}_{L^1}$.

  In addition,
  \[
    \begin{aligned}
      F(u_k,v_k) &= \sum_{i\in I_k} \int_{\Omega_i^k} f\Bigl(x,\frac{\abs{u_k(x)}}{\abs{u_k(x)} + \abs{v_k(x)}},
      \frac{u_k(x)}{\abs{u_k(x)}},\frac{v_k(x)}{\abs{v_k(x)}}\Bigr) (\abs{u_k(x)}+\abs{v_k(x)})\,dx\\
      &= \sum_{i\in I_k} \int_{A_i^k} f\Bigl(x,a(x_i^k),\frac{u(x_i^k)}{\abs{u(x_i^k)}},
      \frac{v(x_i^k)}{\abs{v(x_i^k)}}\Bigr) (\abs{v(x_i^k)}+\abs{u(x_i^k)})\,dx\\
      &\qquad{}+ \sum_{i\in I_k} \int_{B_i^k} f\Bigl(x,b(x_i^k),\frac{u(x_i^k)}{\abs{u(x_i^k)}},
      \frac{v(x_i^k)}{\abs{v(x_i^k)}}\Bigr) (\abs{v(x_i^k)}+\abs{u(x_i^k)})\,dx.
    \end{aligned}
  \]
  Because of the continuity of $f$ and the fact that $\abs{A_i^k} = \alpha(x_i^k)\abs{Q_i^k}$
  and $\abs{B_i^k} = \beta(x_i^k) \abs{Q_i^k}$,
  it follows that
  \[
    \begin{aligned}
      \lim_k F(u_k,v_k)
      &= \int_\Omega \alpha(x) f\Bigl(x,a(x),\frac{u(x)}{\abs{u(x)}},\frac{v(x)}{\abs{v(x)}}\Bigr)\,(\abs{u(x)}+\abs{v(x)})\,dx\\
      &\qquad{} + \int_\Omega \beta(x) f\Bigl(x,b(x),\frac{u(x)}{\abs{u(x)}},\frac{v(x)}{\abs{v(x)}}\Bigr)\,(\abs{u(x)}+\abs{v(x)})\,dx\\
      &= F_c(u,v).
    \end{aligned}
  \]
  This proves the assertion for $u$, $v \in C(\bar{\Omega};\R^d)$.

  Now let $\mu$, $\nu \in \mathcal{M}(\Omega;\R^d)$ be arbitrary.
  Then we can define the regularised measures $\mu_\eps = \varphi_\eps \ast \mu$
  and $\nu_\eps = \varphi_\eps \ast \nu$, where $\varphi_\eps$ is a (scaled)
  standard mollifier.
  Then $\mu_\eps \weaksto \mu$ and $\nu_\eps \weaksto \nu$.
  In addition, we have that $\abs{(\mu_\eps,\nu_\eps)}(\Omega) \to \abs{(\mu,\nu)}(\Omega)$
  (see~\cite[Thm.~2.2]{AmbFusPal00}). As a consequence,
  the Reshetnyak continuity theorem \cite[Thm.~10.3]{Rin18} implies that $F_c(\mu_\eps,\nu_\eps) \to F_c(\mu,\nu)$.
  Now note that $\mu_\eps$ and $\nu_\eps$ are of the form $\mu_\eps = u_\eps \mathcal{L}^n$
  and $\nu_\eps = v_\eps \mathcal{L}^n$ with continuous functions $u_\eps$, $v_\eps$.
  Thus we can use the first part of this proof to find, for every $\eps > 0$, sequences
  $u_\eps^k$, $v_\eps^k$ with $u_\eps^k\mathcal{L}^n \weaksto u_\eps\mathcal{L}^n$,
  $v_\eps^k\mathcal{L}^n \weaksto v\mathcal{L}^n$ and $\norm{u_\eps^k}_{L^1} \to \norm{u_\eps}_{L^1}$,
  $\norm{v_\eps^k}_{L^1} \to \norm{v_\eps}_{L^1}$, such that
  $F(u_\eps^k,v_\eps^k) \to F_c(u_\eps,v_\eps)$.
  By choosing an appropriate diagonal sequence,
  the claim of the theorem is proven.
\end{proof}

\subsection{Proof of Theorem~\ref{th:relax}}\label{se:relaxS}

We will now
apply the results of the previous sections to
the particular functional $F\colon L^1(I;\R^d)^2 \to \R$,
\[
  F(u,v) = -\int_I \inner[B]{\frac{u}{\abs{u}}}{\frac{v}{\abs{v}}}
  \sqrt{\abs{u}\abs{v}}\,dx.
\]
This can be written in the form required by Theorems~\ref{th:lsc} and~\ref{th:relaxBV}
by defining $f \colon \Sigma \times S^{d-1} \times S^{d-1} \to \R$,
\[
  f(t,\xi,\zeta) = -\langle \xi, \zeta \rangle \sqrt{t(1-t)}.
\]
Note that we do not have any dependence on the $x$-variable.

According to Theorem~\ref{th:relaxBV}, we require the convex hull $f_c$
of the function $f$ with respect to the $t$ variable for fixed $\xi$ and $\zeta$.
However, if $\langle \xi,\zeta \rangle \ge 0$, then
the mapping $t \mapsto f(t,\xi,\zeta)$ is already convex and thus
$f(t,\xi,\zeta) = f_c(t,\xi,\zeta)$ in this case.
Conversely, if $\langle \xi,\zeta \rangle < 0$, then
the convex hull of the mapping $t \mapsto f(t,\xi,\zeta)$
is the constant function $f_c(t,\xi,\zeta)$.
This can be summarised to
\[
  f_c(t,\xi,\zeta) = -\langle \xi,\zeta \rangle^+ \sqrt{t(1-t)}
\]
for every $(t,\xi,\zeta) \in \Sigma \times S^{d-1} \times S^{d-1}$.
As a consequence, we have that
\begin{multline*}
  \hat{F}(\mu,\nu) = - \int_I f_c\Bigl(\frac{d\abs{\mu}}{d(\abs{\mu}+\abs{\nu})},\frac{d\mu}{d\abs{\mu}},\frac{d\nu}{d\abs{\nu}}\Bigr)\,d(\abs{\mu}+\abs{\nu})\\
  =
  \int_I \inner[B]{\frac{d\mu}{\abs{d\mu}}}{\frac{d\nu}{\abs{d\nu}}}^+
    \sqrt{\frac{d\abs{\mu}}{d(\abs{\mu}+\abs{\nu})}\frac{d\abs{\nu}}{d(\abs{\mu}+\abs{\nu})}}\,d(\abs{\mu}+\abs{\nu}).
\end{multline*}
As discussed in the beginning of Section~\ref{se:pfthm1}, this 
then implies that
\begin{multline*}
  \hat{S}(c_1,c_2) = - \hat{F}(Dc_1,Dc_2)\\
 = \int_I \inner[B]{\frac{dDc_1}{\abs{dDc_1}}}{\frac{dDc_2}{\abs{dDc_2}}}^+
    \sqrt{\frac{d\abs{Dc_1}}{d(\abs{Dc_1}+\abs{Dc_2})}\frac{d\abs{Dc_2}}{d(\abs{Dc_1}+\abs{Dc_2})}}\,d(\abs{Dc_1}+\abs{Dc_2}),
\end{multline*}
which concludes the proof of Theorem~\ref{th:relax}.

\section{The shape distance on $\BV(I;\R^d)$ and $\SBV(I;\R^d)$}\label{se:pfshape}

\subsection{Distance between reparametrised BV functions}\label{se:relaxD}

In this section, we will prove Propositions~\ref{pr:relaxD} and~\ref{pr:Dinvar}.
To that end, we will need some results concerning the
pointwise convergence of strictly convergent sequences of $\BV$-functions.
The basis for these is the following result on the properties of
strictly convergent measures.

\begin{lemma}
  Let $\mu \in \mathcal{M}(I;\R^d)$ and assume that $\mu_k \weaksto \mu$
  such that $\abs{\mu_k}(I) \to \abs{\mu}(I)$. If $U \Subset I$ is open with
  $\mu(\partial U) = 0$, then $\mu_k(U) \to \mu(U)$.
\end{lemma}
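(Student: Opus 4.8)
The plan is to reduce the vector-valued claim to the scalar portmanteau theorem applied to the total variation measures, and then to recover the vectorial statement by approximating $\mathbf{1}_U$ by a continuous test function, with the approximation error controlled uniformly in $k$. Here I read the hypothesis $\mu(\partial U)=0$ as $\abs{\mu}(\partial U)=0$; this is the condition actually needed, since vanishing of the vector $\mu(\partial U)$ alone does not suffice --- the mass of $\abs{\mu}$ could still concentrate on $\partial U$ in the limit.

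First I would record two one-sided set estimates. Strict convergence $\mu_k\to\mu$ implies $\abs{\mu_k}\weaksto\abs{\mu}$ (as used in the proof of Theorem~\ref{th:lsc}; see~\cite[Cor.~10.2]{Rin18}). Together with $\abs{\mu_k}(I)\to\abs{\mu}(I)$ this gives: for every open $W\subset I$, testing against $\varphi\in C_0(I)$ with $0\le\varphi\le 1$ vanishing outside $W$ yields $\abs{\mu}(W)\le\liminf_k\abs{\mu_k}(W)$; and for every closed $C\subset I$, applying this to $W=I\setminus C$ and subtracting from $\abs{\mu_k}(I)\to\abs{\mu}(I)$ yields $\limsup_k\abs{\mu_k}(C)\le\abs{\mu}(C)$. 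The closed-set estimate is exactly where the convergence of the total masses is indispensable.

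Next I would build the approximating cut-off. For $\delta>0$ put $U_\delta:=\{x\in U:\dist(x,\partial U)>\delta\}$ and $U^\delta:=\{x\in I:\dist(x,U)<\delta\}$. One checks $\overline{U_\delta}\subset U$ for every $\delta$, and $\overline{U^\delta}\subset\operatorname{int}(I)$ for small $\delta$ since $U\Subset I$; moreover $U_\delta\uparrow U$ and $\overline{U^\delta}\downarrow\overline U$ as $\delta\downarrow 0$. Hence the closed ``collar'' $R_\delta:=\overline{U^\delta}\setminus U_\delta$ satisfies $\abs{\mu}(R_\delta)=\abs{\mu}(\overline{U^\delta})-\abs{\mu}(U_\delta)\to\abs{\mu}(\overline U)-\abs{\mu}(U)=\abs{\mu}(\partial U)=0$. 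Given $\eps>0$, fix $\delta$ with $\abs{\mu}(R_\delta)<\eps$ and choose, by Urysohn's lemma, $\varphi\in C_0(I)$ with $0\le\varphi\le 1$, $\varphi\equiv 1$ on $\overline{U_\delta}$, and $\varphi\equiv 0$ outside $U^\delta$; then $\abs{\mathbf{1}_U-\varphi}\le\mathbf{1}_{R_\delta}$ pointwise on $I$.

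Finally I would combine these. Writing
\[
  \mu_k(U)-\mu(U)=\Bigl(\mu_k(U)-\int_I\varphi\,d\mu_k\Bigr)+\Bigl(\int_I\varphi\,d\mu_k-\int_I\varphi\,d\mu\Bigr)+\Bigl(\int_I\varphi\,d\mu-\mu(U)\Bigr),
\]
the middle term tends to $0$ as $k\to\infty$ because $\varphi\in C_0(I)$ and $\mu_k\weaksto\mu$; the first term obeys $\abs{\mu_k(U)-\int_I\varphi\,d\mu_k}=\abs{\int_I(\mathbf{1}_U-\varphi)\,d\mu_k}\le\int_I\abs{\mathbf{1}_U-\varphi}\,d\abs{\mu_k}\le\abs{\mu_k}(R_\delta)$, whose $\limsup$ is $\le\abs{\mu}(R_\delta)<\eps$ by the closed-set estimate; and the last term is bounded by $\abs{\mu}(R_\delta)<\eps$ in the same way. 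Hence $\limsup_k\abs{\mu_k(U)-\mu(U)}\le 2\eps$, and letting $\eps\to 0$ finishes the proof. I expect the only genuinely delicate step to be the uniform control of $\abs{\mu_k}(R_\delta)$ on the closed collar, which is precisely what forces the use of the hypothesis $\abs{\mu_k}(I)\to\abs{\mu}(I)$; the rest is routine.
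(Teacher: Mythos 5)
Your proof is correct and follows the same outline as the paper's: first pass from strict convergence to $\abs{\mu_k}\weaksto\abs{\mu}$ (exactly the paper's appeal to \cite[Cor.~10.2]{Rin18}), then apply the portmanteau-type result for sets whose boundary is $\abs{\mu}$-null, which the paper simply cites as \cite[Cor.~1.204]{FonLeo07} and you instead prove from scratch via the open/closed one-sided estimates and a collar cut-off. Your remark that the hypothesis should be read as $\abs{\mu}(\partial U)=0$ rather than the weaker vector identity $\mu(\partial U)=0$ is a valid and useful clarification of the statement.
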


\begin{proof}
  By~\cite[Cor.~10.2]{Rin18} we obtain that $\abs{\mu_k} \weaksto \abs{\mu}$.
  Now we can use~\cite[Cor.~1.204]{FonLeo07} to obtain the assertion.
\end{proof}

\begin{lemma}\label{le:pwconv}
  Let $c \in \BV(I;\R^d)$, and assume that the sequence
  of functions $c_k \in \BV(I;\R^d)$ converges strictly to $c$.
  Then for every $x \in I$ the set of accumulation points of
  the sequence $c_k(x)$ is contained in the interval $[c^\ell(x),c^r(x)]$.
  In particular, if $c$ is continuous at $x$, then $c_k(x) \to c(x)$.
\end{lemma}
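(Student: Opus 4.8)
The plan is to fix $x\in I$ and an accumulation point $\ell$ of $\bigl(c_k(x)\bigr)_k$, and to show $\dist\bigl(\ell,[c^\ell(x),c^r(x)]\bigr)=0$; since this interval is closed, that yields the first assertion, and the second is the special case $c^\ell(x)=c^r(x)$. After passing to a subsequence (not relabelled) we may assume $c_k(x)\to\ell$; since strict convergence forces $\lVert c_k-c\rVert_{L^1}\to 0$, after a further subsequence we may also assume $c_k\to c$ pointwise on a set $G\subseteq I$ of full measure, still with $c_k(x)\to\ell$. I treat the generic case $x\in(0,1)$; the endpoint cases $x\in\{0,1\}$ are analogous (one-sided) and in fact immediate from the definition of strict convergence.

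Fix $\eps>0$. As $\lvert Dc\rvert$ is finite and the sets $(x-\delta,x+\delta)\setminus\{x\}$ decrease to $\emptyset$, choose $\delta>0$ with $\lvert Dc\rvert\bigl((x-\delta,x+\delta)\setminus\{x\}\bigr)<\eps$; writing $J:=\lvert Dc\rvert(\{x\})=\lvert c^r(x)-c^\ell(x)\rvert$, this in particular gives, via the identities $c^\ell(x)-c^\ell(y)=Dc([y,x))$ and $c^\ell(x)-c^r(y)=Dc((y,x))$ for $y<x$ (and the symmetric ones on the right) together with $c(y)\in[c^\ell(y),c^r(y)]$, that $\lvert c(y)-c^\ell(x)\rvert<\eps$ on $(x-\delta,x)$ and $\lvert c(y)-c^r(x)\rvert<\eps$ on $(x,x+\delta)$. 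Pick $a\in(x-\delta,x)\cap G$ and $b\in(x,x+\delta)\cap G$ with $a,b\notin\Sigma(c)$ (possible since $G$ has full measure and $\Sigma(c)$ is countable). Then $c_k(a)\to c(a)$, $c_k(b)\to c(b)$, and, $c$ being essentially continuous at $a$ and $b$, $Dc((a,b))=c^\ell(b)-c^r(a)=c(b)-c(a)$.

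The heart of the matter is a trapping estimate. For every $t\in[a,b]$ the partition $\{a,t,b\}$ shows that $\lvert c_k(t)-c_k(a)\rvert+\lvert c_k(t)-c_k(b)\rvert$ is bounded by the pointwise total variation of the good representative $c_k$ on $[a,b]$, hence by $\lvert Dc_k\rvert([a,b])$. Applying the preceding lemma to the strictly convergent measures $\lvert Dc_k\rvert\weaksto\lvert Dc\rvert$ on open intervals $(a',b')\supseteq[a,b]$ with $a',b'\notin\Sigma(c)$, then letting $a'\uparrow a$ and $b'\downarrow b$, gives $\limsup_k\lvert Dc_k\rvert([a,b])\le\lvert Dc\rvert((a,b))=:V_\infty$, while $\lvert c_k(a)-c_k(b)\rvert\to\lvert Dc((a,b))\rvert=:L_\infty$. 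By the choice of $\delta$ we have $J-2\eps<L_\infty\le V_\infty<J+\eps$, so $V_\infty-L_\infty<3\eps$. Since any $P\in\R^d$ with $\lvert P-A\rvert+\lvert P-B\rvert\le V$ lies in the filled ellipsoid with foci $A$, $B$ and is therefore within distance $\tfrac12\sqrt{V^2-\lvert A-B\rvert^2}$ of the segment $[A,B]$, it follows that
\[
  \limsup_k\dist\bigl(c_k(x),[c_k(a),c_k(b)]\bigr)\le\tfrac12\sqrt{V_\infty^2-L_\infty^2}\le\tfrac12\sqrt{3\eps\,(V_\infty+L_\infty)}.
\]
Since moreover $[c_k(a),c_k(b)]\to[c(a),c(b)]$ in the Hausdorff metric and $[c(a),c(b)]$ lies within Hausdorff distance $\eps$ of $[c^\ell(x),c^r(x)]$, and $c_k(x)\to\ell$, we conclude $\dist\bigl(\ell,[c^\ell(x),c^r(x)]\bigr)\le\eps+\tfrac12\sqrt{3\eps\,(V_\infty+L_\infty)}$, where $V_\infty+L_\infty<2J+2\eps$ is bounded independently of $\eps$. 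Letting $\eps\to0$ gives $\dist\bigl(\ell,[c^\ell(x),c^r(x)]\bigr)=0$, as required.

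The main obstacle is that merely knowing that $c_k(x)$ lies between $c_k(a)$ and $c_k(b)$ does not suffice: $c_k$ may develop large jumps near $x$, so the interval $[c_k^\ell(x),c_k^r(x)]$ containing $c_k(x)$ need not be small. One really has to exploit the length constraint $\len(c_k)\to\len(c)$ to deduce that $c_k$ is nearly monotone on a small interval around $x$, which is exactly what the bound $V_\infty-L_\infty<3\eps$ encodes, and then the filled-ellipsoid estimate converts near-monotonicity with almost prescribed endpoints into closeness to the segment spanned by those endpoints.
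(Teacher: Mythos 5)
Your proof is correct, and it takes a genuinely different route from the paper's. The paper argues by contradiction: if an accumulation point $z$ of $\bigl(c_k(x)\bigr)_k$ lies outside $[c^\ell(x),c^r(x)]$, the strict triangle inequality supplies a positive excess $\eps := \abs{z-c^\ell(x)}+\abs{z-c^r(x)}-\abs{c^\ell(x)-c^r(x)}$; picking continuity points $x_0<x<x_1$ close to $x$ with $c_k(x_i)\to c(x_i)$, splitting $\abs{Dc_k}(I)$ over $(0,x_0)$, $(x_0,x_1)$, $(x_1,1)$, and bounding the middle piece from below by the two chords $\abs{c_k(x_0)-c_k(x)}+\abs{c_k(x)-c_k(x_1)}$ then forces $\liminf_k\len(c_k)\ge\len(c)+\eps/4$, contradicting $\len(c_k)\to\len(c)$. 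You instead argue directly: you trap $c_k(x)$ in the filled ellipsoid with foci $c_k(a),c_k(b)$ and string length $\abs{Dc_k}([a,b])$, turn the small surplus $V_\infty-L_\infty<3\eps$ into $\dist\bigl(\ell,[c^\ell(x),c^r(x)]\bigr)\le\eps+\tfrac12\sqrt{3\eps(V_\infty+L_\infty)}$, and send $\eps\to0$. The underlying mechanism is the same in both cases --- use $\len(c_k)\to\len(c)$ to control the local variation of $c_k$ near $x$, then compare against the chord to nearby continuity points where pointwise convergence is available --- but the packaging differs: the contradiction route is shorter because the strict triangle inequality hands it the quantity to contradict for free and only a lower bound on $\liminf_k\abs{Dc_k}$ over the outer intervals is needed, whereas your direct route requires the filled-ellipsoid estimate, Hausdorff convergence of segments, and the measure-continuity lemma on open sets with $\abs{Dc}$-null boundary, but in exchange it produces an explicit H\"older-type modulus of proximity of the accumulation points to $[c^\ell(x),c^r(x)]$ in terms of the local variation of $c$ near $x$.
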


\begin{proof}
  Let $z \in \R^d$ be any accumulation point of the sequence $c_k(x)$.
  After possibly passing to a subsequence, we may then assume that
  $c_k(x) \to z$.
  Assume now that $z \not\in [c^\ell(x),c^r(x)]$ and denote
  \[
    \eps := \abs{z-c^\ell(x)} + \abs{z-c^r(x)} - \abs{c^\ell(x)-c^r(x)} > 0.
  \]  
  Since $c_k \to c$ weakly$^*$ in $\BV(I;\R^d)$,
  it follows that $c_k(y) \to c(y)$ for almost every $y \in I$.
  We can therefore choose $x_0 < x$ and $x_1 > x$ such that
  $c_k(x_0) \to c(x_0)$, $c_k(x_1) \to c(x_1)$,
  all the functions $c$ and $c_k$ are continuous at $x_0$ and $x_1$,
  $\abs{Dc}(x_0,x) + \abs{Dc}(x,x_1) < \eps/4$,
  $\abs{c(x_0) - c^\ell(x)} < \eps/4$, and
  $\abs{c(x_1) - c^r(x)} < \eps/4$.
  Then
  \[
    \abs{Dc}(I) = \abs{Dc}(0,x_0) + \abs{Dc}(x_0,x_1) + \abs{Dc}(x_1,1).
  \]
  We have that $\abs{Dc}(0,x_0) = \lim_k \abs{Dc_k}(0,x_0)$ and $\abs{Dc_x}(x_1,1)$.
  Moreover,
  \begin{multline*}
    \abs{Dc}(x_0,x_1)
    = \abs{Dc}(x_0,x) + \abs{Dc}(x,x_1) + \abs{c^\ell(x)-c^r(x)}\\
    < \abs{z-c^\ell(x)} + \abs{z-c^r(x)} - \frac{3\eps}{4}
    < \abs{z-c(x_0)} + \abs{z-c(x_1)} - \frac{\eps}{4}.
  \end{multline*}
  However, we have that
  \begin{multline*}
    \liminf_k \abs{Dc_k}(x_0,x_1) \ge \liminf_k \abs{c_k(x_0)-c_k(x)} + \abs{c_k(x)-c_k(x_1)}\\
    = \abs{z-c(x_0)} + \abs{z-c(x_1)} > \abs{Dc}(x_0,x_1) + \frac{\eps}{4}.
  \end{multline*}
  Combining the results above, we obtain that
  $\liminf_k \abs{Dc_k}(I) > \abs{Dc}(I) + \eps/4$, which contradicts
  the assumption that $c_k$ converges strictly to $c$.
\end{proof}

\begin{lemma}\label{le:simconv}
  Let $c \in \BV(I;\R^d)$, and assume that the sequence
  of functions $c_k \in \BV(I;\R^d)$ converges strictly to $c$.
  Assume moreover that $x \in I$ is such that $c$ is continuous
  at $x$. Then we have for every sequence $x_k \to x$ that $c_k(x_k) \to c(x)$.
\end{lemma}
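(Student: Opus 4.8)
The plan is to reduce the statement to a comparison between $c_k(x_k)$ and $c_k(x_0)$ for a suitable fixed point $x_0$ close to $x$, and then to apply Lemma~\ref{le:pwconv} at the continuity point $x_0$. So fix $\delta > 0$. Since $c$ is continuous at $x$, we have $\abs{Dc}(\{x\}) = 0$ and $c(y) \to c(x)$ as $y \to x$; moreover the set $\Sigma(c) \cup \bigcup_k \Sigma(c_k)$ is at most countable. Hence I can choose points $x_0$, $x_1$ with $0 < x_0 < x < x_1 < 1$ such that $c$ and all $c_k$ are continuous at $x_0$ and at $x_1$, that $\abs{Dc}((x_0,x_1)) < \delta/3$, and that $\abs{c(x_0)-c(x)} < \delta/3$. (If $x \in \{0,1\}$ one uses the obvious one-sided modification.)

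Since $x_k \to x \in (x_0,x_1)$, for all $k$ large enough we have $x_0 < x_k < x_1$, and then, from the pointwise characterisation of the total variation of a good representative,
\[
  \abs{c_k(x_k)-c_k(x_0)} \le \abs{Dc_k}([x_0,x_1]) = \abs{Dc_k}((x_0,x_1)),
\]
the equality holding because $c_k$ has no jump at $x_0$ or $x_1$. Strict convergence of $c_k$ to $c$ implies $\abs{Dc_k}\weaksto\abs{Dc}$ together with $\abs{Dc_k}(I)\to\abs{Dc}(I)$, so the first lemma of this section applied to the open set $(x_0,x_1)\Subset I$---whose boundary $\{x_0,x_1\}$ is $\abs{Dc}$-null---yields $\abs{Dc_k}((x_0,x_1)) \to \abs{Dc}((x_0,x_1)) < \delta/3$. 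Consequently $\limsup_k \abs{c_k(x_k)-c_k(x_0)} \le \delta/3$.

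Finally, Lemma~\ref{le:pwconv} gives $c_k(x_0)\to c(x_0)$, since $x_0$ is a continuity point of $c$. Combining this with the previous estimate and the triangle inequality,
\[
  \limsup_k \abs{c_k(x_k)-c(x)} \le \limsup_k \abs{c_k(x_k)-c_k(x_0)} + \lim_k\abs{c_k(x_0)-c(x_0)} + \abs{c(x_0)-c(x)} \le \tfrac{2\delta}{3},
\]
and since $\delta>0$ was arbitrary, $c_k(x_k)\to c(x)$. The only point requiring a little care is the bookkeeping at jump points: one must take $x_0$ and $x_1$ to be continuity points of every $c_k$ as well as of $c$, so that the variation estimates are clean and the variation of $Dc_k$ over $(x_0,x_1)$ agrees with that over $[x_0,x_1]$. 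Since only countably many points must be avoided, this causes no difficulty, and no other obstacle arises.
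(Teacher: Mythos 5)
Your argument is correct and follows essentially the same strategy as the paper's proof: bound $\lvert c_k(x_k)-c_k(\text{pivot})\rvert$ by $\lvert Dc_k\rvert$ on a small interval around $x$, use strict convergence of $Dc_k$ to show this variation vanishes in the limit, and invoke Lemma~\ref{le:pwconv} at the pivot. The paper simply uses $x$ itself as the pivot (which works because $x$ is a continuity point of $c$ and good representatives satisfy $\lvert c_k(x_k)-c_k(x)\rvert\le\lvert Dc_k\rvert([x_k,x])$), whereas you introduce an auxiliary point $x_0$ that is additionally a continuity point of every $c_k$; this is a harmless extra precaution, not a substantive difference.
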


\begin{proof}
  We can estimate
  \[
    \abs{c_k(x_k)-c(x)} \le \abs{c_k(x_k) - c_k(x)} + \abs{c_k(x)-c(x)}.
  \]
  The function $c$ is continuous at $x$ and thus, in view of Lemma~\ref{le:pwconv},
  the last term tends to zero as $k \to \infty$.
  For the first term, we can estimate
  \[
    \abs{c_k(x_k)-c_k(x)} \le \abs{Dc_k}([x_k,x]).
  \]
  Let now $\eps > 0$ be such that $\abs{Dc}(\{x-\eps,x+\eps\}) = 0$.
  Then we have for sufficiently large $k$ that $[x_k,x] \subset (x-\eps,x+\eps)$
  and thus
  \[
    \limsup_k \abs{Dc_k}([x_k,x])
    \le \limsup_k \abs{Dc_k}(x-\eps,x+\eps)
    = \abs{Dc}(x-\eps,x+\eps).
  \]
  Since this holds for almost every $\eps > 0$ we obtain that
  \[
    \limsup_k \abs{Dc_k}([x_k,x]) \le \abs{Dc}(\{x\}) = 0.
  \]
  Combining all the estimates, we arrive at the claim.
\end{proof}

We now are able to prove Proposition~\ref{pr:relaxD}.

\begin{proof}[Proof of Proposition~\ref{pr:relaxD}]
  Let $c_i \in \BV(I;\R^d)$ and $\varphi_i \in \bar{\Gamma}$ be fixed.
  \medskip
  
  Assume that $\{c_i^{(k)}\}_{k\in\N} \subset \AC(I;\R^d)$ converge
  strictly to $c_i$ and that $\{\varphi_i^{(k)}\}_{k\in\N} \subset \Gamma$
  converge uniformly to $\varphi_i$.
  Since $\len(c_i^{(k)}\circ \varphi_i^{(k)}) = \len(c_i^{(k)}) \to \len(c)$,
  the functions $c_i^{(k)}\circ\varphi_i^{(k)}$ are uniformly bounded in
  $\BV(I;\R^d)$. After possibly passing to a subsequence,
  we may therefore assume that $c_i^{(k)}\circ\varphi_i^{(k)} \weaksto g_i$
  for some $g_i \in \BV(I;\R^d)$.
  Now let $\eps > 0$. Then there exist $0 < x_1 < \ldots < x_N < 1$ such that
  $c_1$ is continuous at $\varphi_1(x_\ell)$ for each $1 \le \ell \le N$, and
  \[
    \len(c_1) \le \sum_{\ell=1}^{N-1} \abs{c_1(\varphi_1(x_{\ell+1})) - c_1(\varphi_1(x_\ell))} + \eps.
  \]
  In addition, we can choose the points $x_\ell$ in such a way
  that $c_1^{(k)}(\varphi_1^{(k)}(x_\ell)) \to g_1(x_\ell)$.
  Since $\varphi_1^{(k)}(x_\ell) \to \varphi_1(x_\ell)$ for all
  $\ell$ and $c_1^{(k)} \to^s c_1$, we obtain from Lemma~\ref{le:simconv} that
  \begin{multline*}
    \len(g_1) \ge \sum_{\ell=1}^{N-1} \abs{g_1(x_{\ell+1})-g_1(x_\ell)}
    = \lim_k \sum_{\ell=1}^{N-1} \abs{c_1^{(k)}(\varphi_1^{(k)}(x_{\ell+1})) - c_1^{(k)}(\varphi_1^{(k)}(x_\ell))}\\
    = \sum_{\ell=1}^{N-1} \abs{c_1(\varphi_1(x_{\ell+1})) - c_1(\varphi_1(x_\ell))}
    \ge \len(c_1) - \eps.
  \end{multline*}
  Since $\eps$ was arbitrary, this shows that $\len(g_1) \ge \len(c_1) = \lim_k \len(c_1^{(k)}\circ\varphi_1^{(k)})$,
  which in turn shows that, actually, $c_1^{(k)}\circ\varphi_1^{(k)}$ converges strictly to $g_1$.
  Similarly, we obtain that $c_2^{(k)}\circ\varphi_2^{(k)} \to^s g_2$.
  Because $\hat{d}$ is strictly lower semi-continuous, it follows that
  \[
    \hat{d}(g_1,g_2) \le \liminf_{k\in\N} \hat{d}(c_1^{(k)}\circ \varphi_1^{(k)},c_2^{(k)}\circ\varphi_2^{(k)})
    \le \liminf_{k\in\N} D(c_1^{(k)},c_2^{(k)};\varphi_1^{(k)},\varphi_2^{(k)}).
  \]
  Next note that it follows from Lemma~\ref{le:pwconv} that $g_i \in [c_i,\varphi_i]$,
  and thus
  \[
    \inf_{g_i \in [c_i,\varphi_i]} \hat{d}(g_1,g_2)
    \le \liminf_{k\in\N} D(c_1^{(k)},c_2^{(k)};\varphi_1^{(k)},\varphi_2^{(k)}).
  \]
  Since the sequences $\{c_i^{(k)}\}_{k\in\N}$ and $\{\varphi_i^{(k)}\}_{k\in\N}$
  converging to $c_i$ and $\varphi_i$, respectively, were arbitrary,  
  and $\hat{D}$ is the lower semi-continuous hull of $D$, it follows that
  \[
    \inf_{g_i \in [c_i,\varphi_i]} \hat{d}(g_1,g_2) \le \hat{D}(c_1,c_2;\varphi_1,\varphi_2).
  \]
  \medskip
  
  Now assume that $g_i \in [c_i,\varphi_i]$ and that
  $\{g_i^{(k)}\}_{k\in\N}\subset\AC(I;\R^d)$ converge strictly
  to $g_i$. Define
  \[
    \varphi_i^{(k)} = \frac{1}{k}\Id + \Bigl(1-\frac{1}{k}\Bigr)\varphi_i,
  \]
  and let
  \[
    c_i^{(k)} := g_i^{(k)} \circ (\varphi_i^{(k)})^{-1}.
  \]
  Then $\varphi_i^{(k)} \in \Gamma$, $c_i^{(k)}\in\AC(I;\R^d)$, and $\varphi_i^{(k)} \to \varphi$ uniformly.
  Moreover, by definition of $c_i^{(k)}$, and since $\len(g_i) = \len(c_i)$,
  we have that $\len(c_i^{(k)}) = \len(g_i^{(k)}) \to \len(g_i) = \len(c_i)$.
  Next, we show that $c_1^{(k)}(y) \to c_1(y)$ at every point $y$ where $c_1$ is continuous.

  To that end, let $y$ be such that $c_1^\ell(y) = c_1^r(y)$.
  Then $c_1^{(k)}(y) = g_1^{(k)}((\varphi_1^{(k)})^{-1}(y))$.
  Since the sequence $(\varphi_1^{(k)})^{-1}(y)$ is bounded, it has a convergent subsequence,
  say $(\varphi_1^{(k')})^{-1}(y) \to z$. Since $\varphi_1^{(k')}$ converges pointwise to $\varphi_1$,
  it follows that $y = \varphi_1(z)$.
  As a consequence, we have that
  $g_1(z) \in [c_1^\ell(\varphi_1(z)),c_1^r(\varphi_1(z))] = [c^\ell(y),c_1^r(y)] = \{c_1(y)\}$,
  which implies in particular that $g_1$ is continuous at $z$.
  Since $g_1^{(k')} \to g_1$ and $(\varphi_1^{(k')})^{-1}(y) \to z$, it follows from Lemma~\ref{le:simconv}
  that
  \[
    c_1^{(k')}(y) = g_1^{(k')}((\varphi_1^{(k')})^{-1}(y)) \to g_1(z) = c_1(y).
  \]
  Since this holds for every convergent subsequence,
  we now obtain that $c_1^{(k)}(y) \to c_1(y)$.
  Together with the convergence $\len(c_1^{(k)}) \to \len(c_1)$,
  this implies that $c_1^{(k)} \to^s c_1$.
  Similarly we obtain that $c_2^{(k)} \to^s c_2$.

  Since $\hat{D}$ is the lower semi-continuous hull of $D$
  with respect to strict convergence in the $c_i$ components
  and uniform convergence in the $\varphi_i$ components, it follows that
  \begin{multline*}
    \hat{D}(c_1,c_2;\varphi_1,\varphi_2)
    \le \liminf_k D(c_1^{(k)},c_2^{(k)};\varphi_1^{(k)},\varphi_2^{(k)})\\
    = \liminf_k d(c_1^{(k)}\circ\varphi_1^{(k)},c_2^{(k)}\circ\varphi_2^{(k)})
    = \liminf_k d(g_1^{(k)},g_2^{(k)}).
  \end{multline*}
  Since $g_i \in [c_i,\varphi_i]$ were arbitrary,
  and $\{g_i^{(k)}\}_{k\in\N} \subset \AC(I;\R^d)$ were arbitrary sequences
  converging strictly to $g_i$ it follows from the definition of $\hat{d}$ that
  \begin{multline*}
    \hat{D}(c_1,c_2;\varphi_1,\varphi_2)
    \le \inf_{g_i\in [c_i,\varphi_i]} \inf_{\substack{\{g_i^{(k)}\}_{k\in\N}\subset \AC(I;\R^d)\\g_i^{(k)}\to^s g_i}}
    d(g_1^{(k)},g_2^{(k)})
    = \inf_{g_i\in [c_i,\varphi_i]} \hat{d}(g_1,g_2),
  \end{multline*}
  which concludes the proof.
\end{proof}

\begin{proof}[Proof of Proposition~\ref{pr:Dinvar}]
  Define
  \[
    \psi^{(k)} := \frac{1}{k}\Id + \Bigl(1-\frac{1}{k}\Bigr)\psi.
  \]
  Then $\lVert\psi^{(k)}-\psi\rVert_\infty \le 2/k$.
  Moreover, $\psi^{(k)}$ satisfies $(\psi^{(k)})' \ge 1/k$ almost everywhere,
  and thus $\psi^{(k)}$ is invertible with inverse
  $\vartheta^{(k)} := (\psi^{(k)})^{-1} \in \Gamma$.

  Now assume that the sequences $\{c_i^{(k)}\}_{k\in\N} \subset \AC(I;\R^d)$ converge strictly to $c_i$
  and that $(\varphi_i^{(k)})_{k\in\N} \subset \Gamma$
  converge uniformly to $\varphi_i$.
  Then
  \[
    \lVert \varphi_i^{(k)}\circ\psi^{(k)} - \varphi_i\circ\psi\rVert_\infty \\
    \le \lVert \varphi_i^{(k)}\circ\psi^{(k)} - \varphi_i\circ\psi^{(k)}\rVert_\infty
    + \lVert \varphi_i\circ\psi^{(k)} - \varphi_i\circ\psi\rVert_\infty.
  \]
  Now the first term on the right hand side converges to zero because
  of the uniform convergence of $\varphi_i^{(k)}$ to $\varphi_i$,
  and the second term converges to zero because of the uniform continuity of $\varphi_i$
  and the uniform convergence of $\psi^{(k)} \to \psi$.
  Thus $\varphi_i^{(k)}\circ\psi^{(k)}$ converges uniformly to $\varphi_i \circ\psi$.
  Since $D$ is lower semi-continous with respect to strict
  convergence in the $c_i$ variables and uniform convergence in the $\varphi_i$ variables,
  and the functional $d$ is invariant under simultaneous reparametrisations,
  it follows that
  \begin{multline*}
    \hat{D}(c_1,c_2;\varphi_1\circ\psi,\varphi_2\circ\psi)
    \le \liminf_k d(c_1^{(k)}\circ\varphi_1^{(k)}\circ\psi^{(k)},c_2^{(k)}\circ\varphi_2^{(k)}\circ\psi^{k})\\
    = \liminf_k d(c_1^{(k)}\circ\varphi_1^{(k)},c_2^{(k)}\circ\varphi_2^{(k)}).
  \end{multline*}
  Since this holds for all such sequences $\{c_i^{(k)}\}_{k\in\N}$ and $\{\varphi_i^{(k)}\}_{k\in\N}$,
  it follows that
  \[
    \hat{D}(c_1,c_2;\varphi_1\circ\psi,\varphi_2\circ\psi)
    \le \hat{D}(c_1,c_2;\varphi_1,\varphi_2).
  \]

  Conversely, assume that $\gamma_i^{(k)} \to \gamma_i :=\varphi_i\circ\psi$ uniformly.
  Then we can similarly
  estimate
  \begin{equation}\label{eq:invarianceh2}
    \lVert \gamma_i^{(k)}\circ\vartheta^{(k)} - \varphi_i\rVert_\infty
    \le \lVert \gamma_i^{(k)}\circ\vartheta^{(k)} - \gamma_i\circ\vartheta^{(k)}\rVert_\infty
    + \lVert \gamma_i\circ\vartheta^{(k)} - \varphi_i\rVert_\infty.
  \end{equation}
  Again, the first term converges to zero because of the uniform convergence
  of $\gamma_i^{(k)}$ to $\gamma_i$.
  For the second term, let $x \in I$ and let $y \in I$ with $\psi(y) = x$.
  Then
  \[
    \lvert\gamma_i(\vartheta^{(k)}(x)) - \varphi_i(x)\rvert
    = \lvert \varphi_i(\psi(\vartheta^{(k)}(\psi(y)))) - \varphi_i(\psi(y))\rvert
  \]
  Since $\lVert\psi-\psi^{(k)}\lVert \le 2/k$, there exists $\eps_k$
  with $\abs{\eps_k} \le 2/k$ such that
  \[
    \psi(\vartheta^{(k)}(\psi(y))) = \psi^{(k)}(\vartheta^{(k)}(\psi(y))) + \eps_k
    = \psi(y) + \eps_k.
  \]
  Thus we have that
  \[
    \abs{\gamma_i(\vartheta^{(k)}(x))-\varphi_i(x)}
    = \abs{\varphi_i(\psi(y)+\eps_k)-\varphi_i(\psi(y))}.
  \]
  Because of the uniform continuity of $\varphi_i$, this implies that
  the second term in~\eqref{eq:invarianceh2} converges to zero,
  and therefore $\lVert\gamma_i^{(k)}\circ\vartheta^{(k)}-\varphi_i\rVert_\infty \to 0$.
  With the same argumentation as before, this now implies that
  \[
    \hat{D}(c_1,c_2,\varphi_1,\varphi_2)\le \hat{D}(c_1,c_2,\varphi_1\circ\psi,\varphi_2\circ\psi).
  \]
  With Proposition~\ref{pr:relaxD}, we now arrive at the assertion.
\end{proof}

\subsection{Proof of Theorem~\ref{th:main2}}\label{se:pfmain2}

We start with showing that the infima in~\eqref{eq:sdistSBV} are attained.

\begin{proposition}\label{pr:existence}
  Assume that $c_1$, $c_2 \in \BV(I;\R^d)$.
  Then the optimisation problem
  \[
    \inf_{(\varphi_1,\varphi_2) \in \bar{\Gamma}} \hat{d}([c_1,\varphi_1],[c_2,\varphi_2])
  \]
  admits a solution $(\bar{\varphi}_1,\bar{\varphi}_2)$.
  Moreover, there exist $\bar{g}_i \in [c_i,\bar{\varphi}_i]$ such that
  \[
    \hat{d}([c_1,\bar{\varphi}_1],[c_2,\bar{\varphi}_2]) = \hat{d}(\bar{g}_1,\bar{g}_2).
  \]
\end{proposition}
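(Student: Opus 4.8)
The plan is to use the direct method of the calculus of variations, working with a minimising sequence of reparametrisation pairs together with associated curves realising the infimum of $\hat{d}$ over the admissible sets. First I would take a minimising sequence $(\varphi_1^{(k)},\varphi_2^{(k)}) \in \bar{\Gamma}^2$ and, for each $k$, pick $g_i^{(k)} \in [c_i,\varphi_i^{(k)}]$ with $\hat{d}(g_1^{(k)},g_2^{(k)})$ close to $\hat{d}([c_1,\varphi_1^{(k)}],[c_2,\varphi_2^{(k)}])$ up to $1/k$ (this is possible by definition of the infimum over $[c_i,\varphi_i^{(k)}]$). By Helly's selection theorem, applied to the monotone functions $\varphi_i^{(k)}$, after passing to a subsequence we may assume $\varphi_i^{(k)} \to \bar{\varphi}_i$ pointwise for some non-decreasing $\bar{\varphi}_i$; since the $\varphi_i^{(k)}$ are continuous and satisfy $\varphi_i^{(k)}(0)=0$, $\varphi_i^{(k)}(1)=1$, and since $\bar{\Gamma}$ consists of continuous functions, one checks that $\bar{\varphi}_i \in \bar{\Gamma}$ and that the convergence is in fact uniform (monotone pointwise convergence to a continuous limit on a compact interval is uniform, by Dini's theorem in its monotone-sequence form, or directly). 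Next, the curves $g_i^{(k)}$ satisfy $\len(g_i^{(k)}) = \len(c_i)$, so they are uniformly bounded in $\BV$; hence, after a further subsequence, $g_i^{(k)} \weaksto \bar{g}_i$ weakly$^*$ in $\BV(I;\R^d)$ for some $\bar{g}_i \in \BV(I;\R^d)$ with $\len(\bar{g}_i) \le \len(c_i)$.

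The heart of the argument is then to verify two things: (i) $\bar{g}_i \in [c_i,\bar{\varphi}_i]$, and (ii) $\hat{d}(\bar{g}_1,\bar{g}_2) \le \liminf_k \hat{d}(g_1^{(k)},g_2^{(k)})$. For (i), the constraint $g_i^{(k)}(x) \in [c_i^\ell(\varphi_i^{(k)}(x)),c_i^r(\varphi_i^{(k)}(x))]$ must pass to the limit. This is where I expect the main obstacle: the containment is a pointwise statement, and the weak$^*$ limit $\bar{g}_i$ need not coincide pointwise with $\lim_k g_i^{(k)}$ at every point. The right tool is a selection-plus-Helly argument applied to $g_i^{(k)}$ directly (again Helly, since each $g_i^{(k)}$ has variation bounded by $\len(c_i)$), extracting a pointwise limit $\tilde{g}_i$; one then shows $\tilde{g}_i$ agrees with $\bar{g}_i$ up to a good representative and that $\len(\tilde g_i) = \len(c_i)$, so actually $g_i^{(k)} \to^s \bar g_i$. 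The pointwise containment then follows by combining the pointwise convergence $g_i^{(k)}(x) \to \bar g_i(x)$ with the convergence $\varphi_i^{(k)}(x) \to \bar\varphi_i(x)$ and the upper semicontinuity of the set-valued map $y \mapsto [c_i^\ell(y),c_i^r(y)]$ at points where $\bar\varphi_i(x)$ is a continuity point of $c_i$; at the (countably many) jump points one argues separately, using that the interval $[c_i^\ell(y),c_i^r(y)]$ is closed and that $\varphi_i^{(k)}(x)$ approaches $\bar\varphi_i(x)$ from possibly both sides — here the total-variation bookkeeping of Lemma~\ref{le:pwconv} (and its proof technique) is exactly what forces $\bar g_i(x)$ into the correct segment. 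Establishing $\len(\bar g_i) = \len(c_i)$ (not merely $\le$) is the subtle point, and I would mimic the argument in the proof of Proposition~\ref{pr:relaxD}: choose finitely many partition points $x_1 < \dots < x_N$ at which $c_i$ is continuous at $\bar\varphi_i(x_\ell)$ and at which $g_i^{(k)}(x_\ell) \to \bar g_i(x_\ell)$, use Lemma~\ref{le:simconv}-type reasoning together with the strict convergence of $c_i^{(k)}\circ\varphi_i^{(k)}$-type quantities, and conclude $\len(\bar g_i) \ge \len(c_i) - \eps$ for every $\eps$.

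Once $g_i^{(k)} \to^s \bar g_i$ is known, step (ii) is immediate from the strict lower semicontinuity of $\hat d$ (which is by definition the strictly lower semicontinuous hull of $d$). Putting everything together,
\[
  \hat{d}([c_1,\bar\varphi_1],[c_2,\bar\varphi_2])
  \le \hat{d}(\bar g_1,\bar g_2)
  \le \liminf_k \hat{d}(g_1^{(k)},g_2^{(k)})
  \le \liminf_k \Bigl(\hat{d}([c_1,\varphi_1^{(k)}],[c_2,\varphi_2^{(k)}]) + \tfrac1k\Bigr)
  = \inf_{(\varphi_1,\varphi_2)\in\bar\Gamma^2} \hat{d}([c_1,\varphi_1],[c_2,\varphi_2]),
\]
where the first inequality uses $\bar g_i \in [c_i,\bar\varphi_i]$ and the definition of $\hat{d}([c_1,\bar\varphi_1],[c_2,\bar\varphi_2])$ as an infimum over such $\bar g_i$. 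Hence $(\bar\varphi_1,\bar\varphi_2)$ is a minimiser and the chain of inequalities collapses, giving in particular $\hat{d}([c_1,\bar\varphi_1],[c_2,\bar\varphi_2]) = \hat{d}(\bar g_1,\bar g_2)$, which is the second assertion. The only genuinely delicate ingredient, as noted, is the simultaneous control of the pointwise limits of $g_i^{(k)}$ and of $\varphi_i^{(k)}$ so as to land inside $[c_i,\bar\varphi_i]$ while preserving length; everything else is soft compactness and lower semicontinuity.
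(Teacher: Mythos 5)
There is a genuine gap at the compactness step for the reparametrisations. You take a minimising sequence $(\varphi_1^{(k)},\varphi_2^{(k)})$ and apply Helly's selection theorem to extract a pointwise limit $\bar\varphi_i$, then assert that ``one checks $\bar\varphi_i\in\bar\Gamma$ and that the convergence is in fact uniform.'' This does not hold in general: a pointwise limit of non-decreasing continuous surjections of $I$ can be discontinuous (e.g.\ $\varphi^{(k)}(x)=x^k$ converges to $\mathbf{1}_{\{1\}}$), and even when the limit is continuous it need not be absolutely continuous, so it need not lie in $\bar\Gamma=\AC(I;I)\cap\{\cdot\}$. Your appeal to Dini's theorem likewise presupposes continuity of the limit, which is precisely what is missing. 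Without membership of $\bar\varphi_i$ in $\bar\Gamma$, the sets $[c_i,\bar\varphi_i]$ are not even defined and the rest of the argument cannot start.

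The paper's proof fixes exactly this point by first invoking the simultaneous-reparametrisation invariance (Proposition~\ref{pr:Dinvar}): one writes $\varphi_i^{(k)}=\gamma_i^{(k)}\circ\psi^{(k)}$ with $(\gamma_1^{(k)})'+(\gamma_2^{(k)})'=2$ a.e., replaces the minimising sequence by $(\gamma_1^{(k)},\gamma_2^{(k)})$, and thereby works with reparametrisations that are uniformly Lipschitz with constant $2$. Arzel\`a--Ascoli then gives uniform convergence to a Lipschitz (hence absolutely continuous) limit in $\bar\Gamma$, so compactness is genuine. Once that is in place, the paper concludes more cheaply than you propose: lower semi-continuity of $(\varphi_1,\varphi_2)\mapsto\hat d([c_1,\varphi_1],[c_2,\varphi_2])$ under uniform convergence is already known from Proposition~\ref{pr:relaxD}, and the existence of $\bar g_i$ follows from strict lower semi-continuity of $\hat S$ together with strict compactness of the sets $[c_i,\bar\varphi_i]$. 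Your more hands-on route via $g_i^{(k)}$, Helly, and an explicit pass-to-the-limit in the pointwise constraint could in principle be made to work after the normalisation step is inserted, but it re-derives by hand facts that the paper already has available and is noticeably more delicate (you yourself flag the length-preservation $\len(\bar g_i)=\len(c_i)$ as the subtle point). I would recommend adding the normalisation via Proposition~\ref{pr:Dinvar} first and then, if you want the shorter path, citing Proposition~\ref{pr:relaxD} for the lower semi-continuity rather than reconstructing it.
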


\begin{proof}
  We follow the proof of~\cite[Proposition 15]{Bru16}.
  
  Assume that $(\varphi_1^{(k)},\varphi_2^{(k)})_{k\in\N}$ is a minimising
  sequence for $\hat{d}([c_1,\cdot],[c_2,\cdot])$.
  Then there exist $\psi^{(k)} \in \bar{\Gamma}$
  such that $\varphi_i^{(k)} = \gamma_i^{(k)} \circ \psi^{(k)}$, where
  $\gamma_i^{(k)} \in \bar{\Gamma}$ are such that
  $(\gamma_1^{(k)})'(x) + (\gamma_2^{(k)})'(x) = 2$ for a.e.~$x \in I$.
  By Proposition~\ref{pr:Dinvar} we have that
  \[
    \hat{d}([c_1,\varphi_1^{(k)}],[c_2,\varphi_2^{(k)}])
    = \hat{d}([c_1,\gamma_1^{(k)}\circ\psi^{(k)}],[c_2,\gamma_2^{(k)}\circ\psi^{(k)}])
    = \hat{d}([c_1,\gamma_1^{(k)}],[c_2,\gamma_2^{(k)}]).
  \]
  Thus $(\gamma_1^{(k)},\gamma_2^{(k)})$ is a minimising sequence
  as well.
  After replacing $\varphi_i^{(k)}$ by $\gamma_i^{(k)}$, we can thus
  assume without loss of generality that all the functions
  $\varphi_i^{(k)}$ are Lipschitz continuous with Lipschitz constant at most $2$.
  After possibly passing to a sub-sequence, we may therefore assume
  without loss of generality that $\varphi_i^{(k)} \to \bar{\varphi}_i$
  for some $\bar{\varphi}_i \in \bar{\Gamma}$.
  Since by construction the functional
  $(\varphi_1,\varphi_2) \mapsto \hat{d}([c_1,\varphi_1],[c_2,\varphi_2])$ is lower semi-continuous
  with respect to uniform convergence,
  it follows that $(\bar{\varphi}_1,\bar{\varphi}_2)$ is a minimiser of $\hat{d}([c_1,\cdot],[c_2,\cdot])$.

  Now, by definition,
  \[
    \hat{d}([c_1,\bar{\varphi}_1],[c_1,\bar{\varphi}_2])
    = \inf_{g_i \in [c_i,\bar{\varphi}_i]} \hat{S}(g_1,g_2).
  \]
  Since $\hat{S}$ is strictly lower semi-continuous and the sets $[c_i,\bar{\varphi}_i]$
  are compact with respect to strict convergence, the existence of $\bar{g}_1$ and $\bar{g}_2$ follows.
\end{proof}

Next we will show that the functions $\bar{g}_i$ in Proposition~\ref{pr:existence}
can be chosen in $\SBV(I;\R^d)$, if $c_i \in \SBV(I;\R^d)$.
For that, we need two preparatory results.

\begin{lemma}\label{le:SBVsing}
  Assume that $c \in \SBV(I;\R^d)$, that $\varphi \in \bar{\Gamma}$,
  and that $g \in [c,\varphi]$.
  Then the singular part $D^s g$ of the measure $Dg$ is concentrated
  on $\varphi^{-1}(\Sigma(c))$.
\end{lemma}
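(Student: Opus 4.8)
The goal is to show that for $c\in\SBV(I;\R^d)$, $\varphi\in\bar\Gamma$, and $g\in[c,\varphi]$, the singular part $D^s g$ lives on $\varphi^{-1}(\Sigma(c))$. The key structural fact I would exploit is that $g(x)\in[c^\ell(\varphi(x)),c^r(\varphi(x))]$ for all $x$, together with $\len(g)=\len(c)$. First I would observe that on the open set $U:=I\setminus\varphi^{-1}(\cl(\Sigma(c)))$ (or better, work on an exhausting family of intervals $J\Subset U$), the curve $g$ is a.e.\ equal to the composition $c\circ\varphi$ of the continuous part of $c$ with $\varphi$, since on the complement of $\varphi^{-1}(\Sigma(c))$ the segment $[c^\ell(\varphi(x)),c^r(\varphi(x))]$ degenerates to the single point $c(\varphi(x))$. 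So I would like to conclude that $g$ restricted to such $J$ is absolutely continuous, hence $D^s g\Lres J=0$.

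The cleanest route is a total-variation accounting argument. Split $I$ into the part $\varphi^{-1}(\Sigma(c))$ and its complement; on any finite subcollection of disjoint intervals contained in the complement, the increments of $g$ are exactly increments of $c$ between images under $\varphi$ of the endpoints, and summing these gives at most the variation of the \emph{continuous} part $c^{(a)}$ of $c$ over the corresponding subset of $I$. Meanwhile, summing increments of $g$ across the jump points (points $x$ with $\varphi(x)\in\Sigma(c)$) contributes at most $\sum_{y\in\Sigma(c)}\abs{[c](y)}=\abs{D^j c}(I)$, since $g$ is squeezed in the segment $[c^\ell(\varphi(x)),c^r(\varphi(x))]$. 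Because $c\in\SBV$, we have $\len(c)=\abs{Dc^{(a)}}(I)+\abs{D^j c}(I)$ exactly, with no Cantor part. The hypothesis $\len(g)=\len(c)$ then forces both inequalities to be equalities: $\abs{Dg}$ restricted to the complement of $\varphi^{-1}(\Sigma(c))$ has total mass at most $\abs{Dc^{(a)}}(I)$, and its mass on $\varphi^{-1}(\Sigma(c))$ is at most $\abs{D^j c}(I)$; if the first were strictly less, the total would drop below $\len(c)$. This rigidity lets me conclude that $\abs{Dg}$ restricted to $I\setminus\varphi^{-1}(\Sigma(c))$ is dominated (via the endpoint-increment comparison applied to arbitrary refinements) by the absolutely continuous measure $\varphi_\#(\abs{Dc^{(a)}})$, hence is itself absolutely continuous there. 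Therefore $D^s g$ is concentrated on $\varphi^{-1}(\Sigma(c))$.

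More precisely, I would make the comparison quantitative: for any interval $[a,b]\subset I$ with $\varphi([a,b])\cap\Sigma(c)=\emptyset$, one has $\abs{Dg}([a,b])=\abs{Dc}(\varphi(a),\varphi(b))$ by the good-representative formula for pointwise total variation applied to both $g$ and $c$ (using that $g$ and $c\circ\varphi$ agree and that $\varphi$ is monotone), and on such an interval $\abs{Dc}$ has no jump or Cantor contribution, so $\abs{Dg}([a,b])=\int_{\varphi(a)}^{\varphi(b)}\abs{\dot c}\,dt$. Covering $I\setminus\varphi^{-1}(\Sigma(c))$ by countably many such intervals and using monotone/dominated convergence gives $\abs{Dg}\Lres\bigl(I\setminus\varphi^{-1}(\Sigma(c))\bigr)\ll\mathcal L^1$, which is exactly the assertion that $D^s g$ lives on $\varphi^{-1}(\Sigma(c))$.

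The main obstacle I anticipate is the bookkeeping around the \emph{boundary} of $\varphi^{-1}(\Sigma(c))$: this set need not be closed, $\varphi$ may be constant on intervals that map to jump points (so $\varphi^{-1}(y)$ can itself be a nondegenerate interval), and $\Sigma(c)$ is only countable, not closed, so $\varphi^{-1}(\Sigma(c))$ can be a messy Borel set rather than a nice open/closed set. I would handle this by never trying to decompose $I$ into an open part and its closed complement directly; instead I would enumerate $\Sigma(c)=\{y_1,y_2,\dots\}$, set $E_N=\varphi^{-1}(\{y_1,\dots,y_N\})$ (a finite union of points and closed intervals, hence nicely structured), and prove $\abs{Dg}\Lres(I\setminus E_N)$ is controlled by $\varphi_\#(\abs{Dc^{(a)}})+\sum_{j>N}\abs{[c](y_j)}\delta$-type error terms, then let $N\to\infty$ using that the tail $\sum_{j>N}\abs{[c](y_j)}\to0$. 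The $\len(g)=\len(c)$ equality is what kills all the error terms in the limit. The rest is routine measure theory, so I would not belabor it.
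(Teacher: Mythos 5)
Your approach is genuinely different from the paper's, and the central intuition is sound: away from $\varphi^{-1}(\Sigma(c))$ the segment $[c^\ell(\varphi(x)),c^r(\varphi(x))]$ degenerates, so $g$ coincides with $c\circ\varphi$ there and, since $c$ has no Cantor part and $\varphi$ is absolutely continuous, $c\circ\varphi$ is absolutely continuous wherever $c$ has no jumps in the image. The paper, however, works from the other side: it first subtracts $c^{(a)}\circ\varphi$ (which is AC, so leaves $D^s g$ unchanged), reducing to a pure-jump $c$; then, for each jump $y$ with $\varphi^{-1}(y)=[a,b]$, it identifies the one-sided limits $g^\ell(a)=c^\ell(y)$ and $g^r(b)=c^r(y)$ by approximating $a$ and $b$ by points where $c\circ\varphi$ is continuous, which gives $\abs{Dg}(\varphi^{-1}(y))\ge\abs{[c](y)}$ directly; summing and using $\len(g)=\len(c)$ forces $\abs{Dg}$ to sit entirely on $\varphi^{-1}(\Sigma(c))$. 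That argument needs no covering of the complement at all, which is exactly where your version runs into trouble.

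The gaps in your version are not cosmetic. First, the asserted bound ``$\abs{Dg}$ restricted to $I\setminus\varphi^{-1}(\Sigma(c))$ has total mass at most $\abs{Dc^{(a)}}(I)$'' does not follow from summing increments over intervals $[a_r,b_r]$ contained in the complement: the increment $\abs{g(b_r)-g(a_r)}=\abs{c(\varphi(b_r))-c(\varphi(a_r))}$ can absorb jumps of $c$ lying strictly between $\varphi(a_r)$ and $\varphi(b_r)$, so you only get a bound by $\abs{Dc^{(a)}}$ after restricting to intervals whose $\varphi$-image avoids $\Sigma(c)$ --- and when $\Sigma(c)$ is dense and $\varphi$ is strictly increasing there are no such nondegenerate intervals, which is precisely the covering obstruction you flag but do not resolve. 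Second, the object you want to dominate $\abs{Dg}\Lres(I\setminus\varphi^{-1}(\Sigma(c)))$ by is not the pushforward $\varphi_\#\abs{Dc^{(a)}}$ (that lives on the target copy of $I$); what you would need is something like $B\mapsto\abs{Dc^{(a)}}(\varphi(B))$, which is not $\sigma$-additive when $\varphi$ fails to be injective, so the ``domination implies absolute continuity'' step needs to be reformulated (essentially via Luzin's condition (N) for $\varphi\in\AC$). Third, the $E_N$-approximation you sketch is plausible, but making ``error terms of order $\sum_{j>N}\abs{[c](y_j)}$'' precise while passing to the limit in a decreasing family of Borel sets is exactly the kind of bookkeeping the paper's endpoint-limit argument is designed to avoid, so labeling it ``routine measure theory'' undersells how much is left to do. If you want to keep your route, the cleanest repair is to import the paper's reduction: pass to pure-jump $c$ first, at which point the complement has $\abs{Dg}$-measure zero outright and no AC comparison is needed.
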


\begin{proof}
  Write $c = c^{(a)} + c^{(j)}$. Then $g = c^{(a)}\circ \varphi + h$
  for some $h \in [c^{(j)},\varphi]$. We have
  that $c^{(a)}\circ\varphi \in \AC(I;\R^d)$, which implies
  that $D^s c = D^s h$.
  We may therefore assume without loss of generality that $c^{(a)} = 0$,
  $c = c^{(j)}$, and $g \in [c^{(j)},\varphi]$.

  Let $x \in I$ and write $\varphi^{-1}(x) = [a,b]$ with $a \le b$. 
  Then there exists a sequence $a_k \to a^-$ such that  
  $c$ is continuous at each $\varphi(a_k)$.
  In particular, we have that $g(a_k) = c(\varphi(a_k))$ for each $k$.
  Since $g^{\ell}$ and $c^{\ell}$ are left continuous and $\varphi(a_k) \to \varphi(a) = x$,
  it follows that
  $g^{\ell}(a) = \lim_{k \to \infty} g(a_k) = \lim_{k \to \infty} c(\varphi(a_k))
  = c^{\ell}(x)$.
  Similarly, we obtain that $g^{r}(b) = c^{r}(x)$.
  This implies in particular that
  \[
    \abs{Dg}(\varphi^{-1}(x)) \ge \abs{g^{r}(b)-g^{\ell}(a)}
    = \abs{c^{r}(x)-c^{\ell}(x)}
  \]
  for every $x \in I$.
  Thus we have that
  \begin{multline*}
    \abs{Dc}(I) = \abs{Dg}(I) \ge \abs{Dg}(\varphi^{-1}(\Sigma(c))) = 
    \sum_{x \in \Sigma(c)} \abs{Dg}(\varphi^{-1}(x))\\
    \ge \sum_{x \in \Sigma(c)} \abs{c^{r}(x) - c^{\ell}(x)}
    = \abs{D^s c}(I) = \abs{Dc}(I).
  \end{multline*}
  This shows that $\abs{Dg}(\varphi^{-1}(\Sigma(c))) = \abs{Dg}(I)$,
  which in turn implies that $\abs{Dg}(I\setminus\varphi^{-1}(\Sigma(c))) = 0$.
\end{proof}

\begin{lemma}\label{le:SBVhelp}
  Assume that $U \subset \R^d$ is a Borel set, $\nu \in \mathcal{M}_+(U)\setminus\{0\}$
  is a non-trivial positive Radon measure on $U$, and $g \colon U \to \R_{\ge 0}$
  is a non-negative Borel function on $U$ with $\int_U g(x)\,d\nu > 0$.
  Assume moreover that $\mu \in \mathcal{M}_+(U)$
  solves the optimisation problem
  \begin{equation}\label{eq:mumax}
    F(\mu) := \int_U g(x) \sqrt{\frac{d\mu}{d(\mu+\nu)} \frac{d\nu}{d(\mu+\nu)}}\, d(\mu+\nu)
    \to \max_{\substack{\mu \in \mathcal{M}_+(U) \\ \mu(U) = 1}}
  \end{equation}
  Then $\mu \ll \nu$.
\end{lemma}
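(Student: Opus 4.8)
The plan is to argue by contradiction using the Lebesgue decomposition of $\mu$ with respect to $\nu$. Write $\mu = \mu_a + \mu_s$ with $\mu_a \ll \nu$ and $\mu_s \perp \nu$, and suppose that $\mu_s \neq 0$. Fix a Borel set $N \subset U$ carrying $\mu_s$ (so $\mu_s(U \setminus N) = 0$) with $\nu(N) = 0$; since $\mu_a \ll \nu$, we then also have $\mu_a(N) = 0$. It is convenient to introduce the ``unnormalised'' functional $\tilde{F}(\eta) := \int_U g\sqrt{\frac{d\eta}{d(\eta+\nu)}\frac{d\nu}{d(\eta+\nu)}}\,d(\eta+\nu)$ for arbitrary $\eta \in \mathcal{M}_+(U)$, which agrees with $F$ whenever $\eta(U) = 1$.

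The first step is to show that the singular part of $\mu$ does not contribute to the objective, i.e.\ $F(\mu) = \tilde{F}(\mu_a)$. On $N$ we have $\nu(N) = 0$, so the measures $\mu+\nu$ and $\mu$ agree on Borel subsets of $N$, whence $d\nu/d(\mu+\nu) = 0$ holds $(\mu+\nu)$-a.e.\ on $N$ and the integrand of $F$ vanishes there. On $U \setminus N$ we have $\mu_s = 0$, and since $\mu_a(N) = \nu(N) = 0$ the measures $\mu+\nu$ and $\mu_a+\nu$ agree on $U\setminus N$; hence on $U\setminus N$ the relevant Radon--Nikodym derivatives are those of $\mu_a$ and $\nu$ with respect to $\mu_a + \nu$, which gives the claimed identity.

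The second step is to perturb $\mu$ by replacing its singular mass with a multiple of $\nu$: set $c := \mu_s(U)/\nu(U) > 0$ (note $\nu(U) > 0$) and $\mu' := \mu_a + c\nu$. Then $\mu' \in \mathcal{M}_+(U)$, $\mu' \ll \nu$, and $\mu'(U) = \mu_a(U) + \mu_s(U) = 1$, so $\mu'$ is admissible in~\eqref{eq:mumax}. To compare $F(\mu') = \tilde{F}(\mu')$ with $F(\mu) = \tilde{F}(\mu_a)$, write everything relative to the reference measure $\rho := \mu_a + \nu$, with densities $a := d\mu_a/d\rho$ and $b := d\nu/d\rho$ satisfying $a + b = 1$ $\rho$-a.e. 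A short computation with the chain rule for Radon--Nikodym derivatives, using $d(\mu'+\nu)/d\rho = 1 + cb$, gives $\tilde{F}(\mu_a) = \int_U g\sqrt{ab}\,d\rho$ and $\tilde{F}(\mu') = \int_U g\sqrt{(a+cb)b}\,d\rho$. Since $a + cb \ge a$ pointwise, with strict inequality wherever $b > 0$, and since $\int_U g\,d\nu > 0$ forces $\nu(\{g>0\}) > 0$ and hence $\rho(\{g>0\}\cap\{b>0\}) > 0$, we conclude $F(\mu') > F(\mu)$, contradicting the optimality of $\mu$. Therefore $\mu_s = 0$, i.e.\ $\mu \ll \nu$.

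I expect the main obstacle to be the bookkeeping in the first two steps: verifying carefully that the objective is genuinely insensitive to the singular part of $\mu$, and that the chain-rule computation of the densities of $\mu'$ and $\nu$ with respect to $\mu'+\nu$ produces exactly the stated integrands. Once this is in place, the strict monotonicity in $c$ of the integrand $\sqrt{(a+cb)b}$ together with the hypothesis $\int_U g\,d\nu > 0$ closes the argument; the degenerate case $\mu_a = 0$ (so $\rho = \nu$ and $a = 0$) is covered by exactly the same formulas.
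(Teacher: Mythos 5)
Your proposal is correct and follows essentially the same strategy as the paper: Lebesgue-decompose $\mu=\mu_a+\mu_s$, observe that the singular part contributes nothing to $F$ because $d\nu/d(\mu+\nu)$ vanishes where $\mu_s$ lives, then replace the singular mass by a multiple of $\nu$ to strictly increase the objective. Two points of comparison are worth recording. First, you identify the carrier of $\mu_s$ directly via the definition of mutual singularity (a Borel $\nu$-null set $N$), whereas the paper invokes the Besicovitch density set $\{\lim \mu(B_\eps)/\nu(B_\eps)=\infty\}$; your route is a bit more elementary and makes the vanishing of the integrand on $N$ and the agreement of the Radon--Nikodym densities on $U\setminus N$ more transparent. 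Second, and more substantively, your choice of comparison measure $\mu' = \mu_a + (\mu_s(U)/\nu(U))\,\nu$ actually satisfies the constraint $\mu'(U)=1$, whereas the paper's $\hat\mu=\mu^a+\mu^s(U)\,\nu$ has total mass $1+\mu^s(U)(\nu(U)-1)$, which equals $1$ only when $\nu(U)=1$; so your version repairs a small normalisation slip in the paper's proof (the slip is harmless in the paper's application, since one may scale, but your argument is the cleaner one as stated). The chain-rule computation reducing $\tilde F(\mu_a)$ and $\tilde F(\mu')$ to $\int g\sqrt{ab}\,d\rho$ and $\int g\sqrt{(a+cb)b}\,d\rho$ with $\rho=\mu_a+\nu$ is correct, and the strict inequality on the positive-$\rho$-measure set $\{g>0\}\cap\{b>0\}$ closes the contradiction exactly as you say.
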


\begin{proof}
  Decompose $\mu = \mu^a + \mu^s$ with $\mu^a \ll \nu$ and $\mu^s \perp \nu$.
  Then $\mu^s$ is concentrated on the set
  \[
    E := \Bigl\{ x \in U : \lim_{\eps \to 0} \frac{\mu(B_\eps(x))}{\nu(B_\eps(x))} = \infty\Bigr\}.
  \]
  Moreover, we have for $(\mu+\nu)$-a.e.~$x \in E$ that
  \[
    \frac{d\nu}{d(\mu+\nu)}(x) = \lim_{\eps\to 0} \frac{\nu(B_\eps(x))}{\mu(B_\eps(x)) + \nu(B_\eps(x))} = 0.
  \]
  As a consequence, $F(\mu) = F(\mu^a)$.
  Now denote by $u \in L^1(U;\nu)$ the density of $\mu^a$ with respect to $\nu$,
  that is, $\mu^a = u\,\nu$. Then
  \[
    \frac{d\mu}{d(\mu+\nu)} = \frac{u}{u+1}
    \quad\text{ and }\quad
    \frac{d\nu}{d(\mu+\nu)} = \frac{1}{u+1}
  \]
  $(\nu+\mu)$-almost everywhere, and
  \[
    F(\mu) = F(\mu^a)
    = \int_U g(x) \sqrt{\frac{u}{u+1}\frac{1}{u+1}}\,d((u+1)\nu)=\int_U g(x) \sqrt{u}\,d\nu.
  \]
  Now define $\hat{\mu} = \mu^a + \mu^s(U)\,\nu$. Then we obtain similarly that
  \[
    F(\hat{\mu}) = \int_U g(x)\sqrt{u+\mu^s(U)}\,d\nu.
  \]
  Since $\int_U g(x)\,d\nu > 0$ and $\mu$ was assumed to be a maximiser
  of $F$, this implies that $\mu^s = 0$, which proves the assertion.
\end{proof}

\begin{remark}\label{re:int0}
  Let $F$ be as in~\eqref{eq:mumax}, but assume that
  $\int_U g(x)\,d\nu = 0$. Let moreover $\mu \in \mathcal{P}(U)$.
  Then we can again decompose $\mu = \mu^a + \mu^s$ with $\mu^a \ll \nu$
  and $\mu^s \perp \nu$. Moreover, we have is in the proof of Lemma~\ref{le:SBVhelp}
  that $F(\mu) = F(\mu^a)$. However, because $\mu^a \ll \nu$
  and $\int_U g(x)\,d\nu = 0$ we have that
  $F(\mu^a) = 0$.
  Thus the functional $F$ is in this case
  the trivial functional $F(\mu) \equiv 0$.
\end{remark}

\begin{proposition}\label{pr:SBVsols}
  Assume that $c_i \in \SBV(I;\R^d)$ and that
  $\varphi_i \in \bar{\Gamma}$ satisfy $\varphi_1' + \varphi_2' = 2$ almost everywhere.
  Then there exist functions $\bar{g}_i \in [c_i,\varphi_i]$
  such that $\bar{g}_i \in \SBV(I;\R^d)$ and
  \[
    \hat{d}([c_1,\varphi_1],[c_2,\varphi_2]) = \hat{d}(g_1,g_2).
  \]
\end{proposition}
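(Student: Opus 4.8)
The plan is to take an optimal pair $\bar g_i \in [c_i,\varphi_i]$ and then remove the Cantor parts of the $\bar g_i$ by ``collapsing'' them onto the jump intervals of the reparametrisations, using $\varphi_1'+\varphi_2'=2$ to decouple the two curves over those intervals and Lemma~\ref{le:SBVhelp} to carry out the collapsing. As in the proof of Proposition~\ref{pr:existence}, $[c_i,\varphi_i]$ is compact for strict convergence and $\hat S$ is strictly lower semi-continuous, so such a pair exists with $\hat S(\bar g_1,\bar g_2) = \sup_{g_i\in[c_i,\varphi_i]}\hat S(g_1,g_2)$; in particular $\bar g_1$ maximises $\hat S(\cdot,\bar g_2)$ and $\bar g_2$ maximises $\hat S(\bar g_1,\cdot)$.

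The main structural input is that for every $g \in [c_i,\varphi_i]$ and every $x_0 \in \Sigma(c_i)$ one has $\abs{Dg}(\varphi_i^{-1}(x_0)) = \abs{[c_i](x_0)}$. Indeed, combining the length identity $\len(g) = \len(c_i) = \norm{\dot c_i}_{L^1} + \sum_{x_0\in\Sigma(c_i)}\abs{[c_i](x_0)}$ with the bounds $\abs{Dg}(\varphi_i^{-1}(x_0)) \ge \abs{[c_i](x_0)}$ (as in the proof of Lemma~\ref{le:SBVsing}, using $g^\ell(\min\varphi_i^{-1}(x_0)) = c_i^\ell(x_0)$ and $g^r(\max\varphi_i^{-1}(x_0)) = c_i^r(x_0)$) and $\abs{Dg}\bigl(I\setminus\varphi_i^{-1}(\Sigma(c_i))\bigr) \ge \norm{\dot c_i}_{L^1}$ (a routine reparametrisation estimate) forces equality in all of them. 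Hence, on $J := \varphi_i^{-1}(x_0)$, the curve $g$ traverses the segment $[c_i^\ell(x_0),c_i^r(x_0)]$ monotonically, so that $\frac{dDg}{d\abs{Dg}} \equiv e_{x_0}^{(i)} := [c_i](x_0)/\abs{[c_i](x_0)}$ on $J$; conversely, every non-negative measure on $J$ of mass $\abs{[c_i](x_0)}$ is realised as $\abs{Dg}\Lres J$ for some $g \in [c_i,\varphi_i]$ that agrees with a prescribed element off $J$ (use that measure's ``distribution function'' as the traversal profile).

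Now fix a non-degenerate component $J = \varphi_1^{-1}(x_0)$. On $J$ one has $\varphi_1' = 0$, hence $\varphi_2' = 2$, a.e., so $\varphi_2$ is strictly increasing on $J$; since then $\varphi_2^{-1}$ meets $J$ in at most single points, Lemma~\ref{le:SBVsing} gives $D^c\bar g_2\Lres J = 0$, i.e.\ $\nu_0 := \abs{D\bar g_2}\Lres J$ has no non-atomic singular part. Given any $g_1' \in [c_1,\varphi_1]$ that agrees with $\bar g_1$ on the \emph{open} set $I\setminus J$, we have $Dg_1' = D\bar g_1$ there, so by Theorem~\ref{th:relax} the increment $\hat S(g_1',\bar g_2) - \hat S(\bar g_1,\bar g_2)$ localises to $J$ and equals $F(\mu) - F(\bar\mu)$, where $\mu := \abs{Dg_1'}\Lres J$, $\bar\mu := \abs{D\bar g_1}\Lres J$ (both of mass $\abs{[c_1](x_0)}$ and, on $J$, with constant direction $e_{x_0}^{(1)}$), and $F$ is the functional of~\eqref{eq:mumax} over $U = J$ with $\nu := \nu_0$, mass constraint $\mu(J) = \abs{[c_1](x_0)}$, and weight $w := \inner[b]{e_{x_0}^{(1)}}{\,dD\bar g_2/d\abs{D\bar g_2}\,}^+ \ge 0$. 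By the realisability from the previous paragraph together with the optimality of $\bar g_1$, the measure $\bar\mu$ maximises $F$. If $\int_J w\,d\nu_0 > 0$, Lemma~\ref{le:SBVhelp} yields $\bar\mu \ll \nu_0$; but $D^c\bar g_1\Lres J$ — the non-atomic Lebesgue-singular part of $\bar\mu$ — is mutually singular with $\nu_0$, so this forces $D^c\bar g_1\Lres J = 0$. If $\int_J w\,d\nu_0 = 0$ (in particular if $\nu_0 = 0$), then $F \equiv 0$ by Remark~\ref{re:int0}, and we may replace $\bar g_1|_J$ by the single jump from $c_1^\ell(x_0)$ to $c_1^r(x_0)$: this keeps $\bar g_1 \in [c_1,\varphi_1]$, leaves $\hat S(\bar g_1,\bar g_2)$ unchanged, and makes $D^c\bar g_1\Lres J = 0$. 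Carrying this out over all non-degenerate components of $\varphi_1^{-1}(\Sigma(c_1))$ — the pair staying optimal throughout — and then symmetrically for $\bar g_2$, and recalling that $D^c\bar g_i$ is concentrated on $\varphi_i^{-1}(\Sigma(c_i))$ (Lemma~\ref{le:SBVsing}), which differs from the union of its non-degenerate components by a countable set, non-atomicity gives $D^c\bar g_i = 0$, i.e.\ $\bar g_i \in \SBV(I;\R^d)$; and $\hat S(\bar g_1,\bar g_2)$ having been preserved, $\hat d(\bar g_1,\bar g_2) = \hat d([c_1,\varphi_1],[c_2,\varphi_2])$.

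The step I expect to cost the most work is the structural one: establishing $\abs{Dg}(\varphi_i^{-1}(x_0)) = \abs{[c_i](x_0)}$ (together with the auxiliary bound $\abs{Dg}(I\setminus\varphi_i^{-1}(\Sigma(c_i))) \ge \norm{\dot c_i}_{L^1}$), since this is exactly what pins the direction $dDg/d\abs{Dg}$ on each jump interval and thereby turns the local maximisation of $\hat S$ into the one-measure problem~\eqref{eq:mumax}. Once that is in place, the decoupling coming from $\varphi_1'+\varphi_2'=2$ and Lemma~\ref{le:SBVhelp}/Remark~\ref{re:int0} finish the argument with little further work.
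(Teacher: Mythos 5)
Your proposal is correct and follows essentially the same route as the paper: isolate the non-degenerate intervals $J=\varphi_1^{-1}(x_0)$, use $\varphi_1'+\varphi_2'=2$ to make $\varphi_2$ strictly increasing on $J$ (so $D^c\bar g_2\Lres J=0$ via Lemma~\ref{le:SBVsing}), recast the optimality of $\bar g_1$ on $J$ as the one-measure problem~\eqref{eq:mumax}, and then invoke Lemma~\ref{le:SBVhelp} (resp.\ Remark~\ref{re:int0}) to kill the Cantor part. The only cosmetic deviations are that you spell out the length-accounting identity $\abs{Dg}(\varphi_i^{-1}(x_0))=\abs{[c_i](x_0)}$ as a standalone step (the paper obtains it inside the proof of Lemma~\ref{le:SBVsing}), and that in the degenerate case $\int_J w\,d\nu_0=0$ you collapse $\bar g_1|_J$ to a single jump rather than the linear interpolation used in the paper; both choices remove the Cantor part and keep $\bar g_1\in[c_1,\varphi_1]$, so this is immaterial.
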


\begin{proof}
  Since the sets $[c_i,\varphi_i]$ are strictly compact
  and $\hat{d}([c_1,\cdot],[c_2,\cdot])$ is strictly lower semi-continuous,
  there exist $g_i \in [c_i,{\varphi}_i]$ such that
  $\hat{d}([c_1,\varphi_1],[c_1,\varphi_2]) = \hat{d}(g_1,g_2)$.
  We will show that it is possible to replace $g_1$ and $g_2$ by
  functions $\bar{g}_1$, $\bar{g}_2 \in \SBV(I;\R^d)$ in such a way
  that $\hat{d}(g_1,g_2) = \hat{d}(\bar{g}_1,\bar{g}_2)$,
  or, equivalently, $\hat{S}(g_1,g_2) = \hat{S}(\bar{g}_1,\bar{g}_2)$.
  
  According to Lemma~\ref{le:SBVsing}, the singular part of
  $Dg_i$ is concentrated on ${\varphi}_i^{-1}(\Sigma(c_i))$.
  Now assume that $y \in \Sigma(c_1)$. Since ${\varphi}_1$ is
  continuous and non-decreasing it follows that ${\varphi}_1^{-1}(y)$ 
  is either a single point or a closed interval.
  Denote now by $R \subset \Sigma(c_1)$ the set of jump points $y$
  of $c_1$ for which ${\varphi}_1^{-1}(y)$ is an non-degenerate interval.
  Then the non-atomic part $D^c g_1$ of $D^s g_1$ is concentrated
  on the set $E := \bigcup_{y \in R} \inn {\varphi}_1^{-1}(y)$,
  since ${\varphi}_1^{-1}(\Sigma(c_1)) \setminus E$ is at most countable.

  Let now $y \in R$ and denote $[a,b] := {\varphi}_1^{-1}(y)$.
  By assumption, the function $g_1$ solves the optimisation problem
  $\max_{g \in [c_1,\varphi_1]}\hat{S}(g,g_2)$.
  Moreover, we have that
  \[
    Dg_1 \Lres (a,b) = v\,\abs{Dg_1} \Lres (a,b)
    \qquad\text{ with }\qquad
    v = \frac{c_1^{r}(y) - c_1^{\ell}(y)}{\abs{c_1^{r}(y)-c_1^{\ell}(y)}} \in \R^d.
  \]
  Now let $\mu \in \mathcal{M}_+(a,b)$ be a positive Radon measure
  satisfying $\mu(a,b) = \abs{Dg_1}(a,b)$.
  Then the function $\hat{g}_1$ defined by
  \[
    \hat{g}_1 = \begin{cases}
      g_1(x) & \text{ if } x \not\in (a,b),\\
      g_1^{\ell}(a) + v \mu(a,x) & \text{ if } x \in (a,b),
    \end{cases}
  \]
  satisfies $\hat{g}_1 \in [c_1,{\varphi}_1]$,
  and $D\hat{g}_1 \Lres (a,b) = v\mu$ and $D\hat{g}_1 \Lres (I\setminus(a,b)) = Dg_1 \Lres (I\setminus(a,b))$.
  Thus $\abs{Dg_1}\Lres(a,b)$ solves the optimisation problem
  \begin{equation}\label{eq:Dg1max}
    \int_{(a,b)} \Bigl\langle v, \frac{dDg_2}{d\abs{Dg_2}}\Bigr\rangle^+
    \sqrt{\frac{d\mu}{d(\mu+\abs{dDg_2})} \frac{d\abs{Dg_2}}{d(\mu + \abs{Dg_2})}}\,d(\mu+\abs{Dg_2}) \to \max
  \end{equation}
  where the maximum is taken over all $\mu \in \mathcal{M}_+(a,b)$
  with $\mu(a,b) =\abs{Dg_1}(a,b)$.

  Assume now that
  $\int_{(a,b)} \langle v,dDg_2/d\abs{Dg_2}\rangle^+ d\abs{Dg_2} > 0$.
  Then we obtain from Lemma~\ref{le:SBVhelp} that
  $\abs{Dg_1} \Lres (a,b) \ll \abs{Dg_2} \Lres (a,b)$.
  Since ${\varphi}_1$ is constant on $[a,b]$ and ${\varphi}_1' + {\varphi}_2' = 2$
  almost everywhere, it follows that ${\varphi}_2'$ is strictly increasing
  on $(a,b)$. Thus ${\varphi}_2^{-1}(\Sigma(c_1)\cap (a,b))$ is an
  at most countable union of single points. Since by Lemma~\ref{le:SBVsing}
  the measure $D^s g_2 \Lres (a,b)$ is concentrated
  on ${\varphi}_2^{-1}(\Sigma(c_1))\cap(a,b)$, it is purely atomic.
  Thus $\abs{D^s g_1} \Lres (a,b)$ is purely atomic as well and therefore
  $\abs{D^c g_1} \Lres (a,b) = 0$.

  Now assume that $\int_{(a,b)} \langle v,dDg_2/d\abs{Dg_2}\rangle^+ d\abs{Dg_2} = 0$.
  As seen in Remark~\ref{re:int0} we have in this case that the
  integral in~\eqref{eq:Dg1max} is equal to zero for all choices of $\mu$.
  We can therefore replace $g_1$ on the interval $(a,b)$ by the function
  \[
    \bar{g}_1(x) = g_1^{\ell}(a) + \frac{x-a}{b-a} (g_1^{r}(b)-g_1^{\ell}(a)),
  \]
  corresponding to a choice of $\mu = s\mathcal{L}^1$
  with $s = \abs{Dg_1}(a,b) /(b-a)$ and have that
  $\hat{S}(\bar{g}_1,g_2) = \hat{S}(g_1,g_2)$.

  Repeating this procedure first for each $y\in R$ and then for
  the function $g_2$, we arrive at the claim.
\end{proof}

\begin{proposition}\label{pr:equalmax}
  Let $c_1$, $c_2 \in \SBV(I;\R^d) \setminus\{0\}$. Then
  \begin{equation}\label{eq:equalmax}
    \min_{(\varphi_1,\varphi_2) \in \bar{\Gamma}} \hat{d}([c_1,\varphi_1],[c_2,\varphi_2])
    = \min_{(\psi_1,\psi_2) \in \bar{\Gamma}} \hat{d}\bigl(G(c_1) \circ \psi_1, G(c_2)\circ \psi_2\bigr).
  \end{equation}
\end{proposition}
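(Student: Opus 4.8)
The plan is to split \eqref{eq:equalmax} into the two inequalities $\inf_\varphi\le\inf_\psi$ and $\inf_\varphi\ge\inf_\psi$, after reducing everything to the functional $\hat S$. Since $\len(g)=\len(c_i)$ for every $g\in[c_i,\varphi_i]$ and $\len(G(c_i)\circ\psi_i)=\len(G(c_i))=\len(c_i)$ (by invariance of length under reparametrisation together with $G(c_i)\in[c_i,\zeta_i]$), the identity $\hat d(\cdot,\cdot)=\len(c_1)+\len(c_2)-2\hat S(\cdot,\cdot)$ turns \eqref{eq:equalmax} into the statement that $\sup_{(\psi_1,\psi_2)\in\bar\Gamma}\hat S(G(c_1)\circ\psi_1,G(c_2)\circ\psi_2)$ equals $\sup_{(\varphi_1,\varphi_2)\in\bar\Gamma}\sup_{g_i\in[c_i,\varphi_i]}\hat S(g_1,g_2)$, all suprema being finite. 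Here I keep in mind that every $G(c_i)\circ\psi_i$ is absolutely continuous (an $\AC$ curve composed with a non-decreasing $\AC$ map), so that by Theorem~\ref{th:relax} the $\hat S$ of two such curves consists of its absolutely continuous term only, whereas on $\SBV$ curves $\hat S$ additionally carries the sum over the common jump set. The inequality $\inf_\varphi\le\inf_\psi$ is then the easy one: given $\psi_1,\psi_2\in\bar\Gamma$, set $\varphi_i:=\zeta_i\circ\psi_i$, where $\xi_i$, $\zeta_i$ denote the strictly increasing map and its Lipschitz left inverse associated with $c_i$ in the construction of $G(c_i)$; then $\varphi_i\in\bar\Gamma$, and from $G(c_i)(t)\in[c_i^\ell(\zeta_i(t)),c_i^r(\zeta_i(t))]$ for all $t$ together with $\len(G(c_i)\circ\psi_i)=\len(c_i)$ we obtain $G(c_i)\circ\psi_i\in[c_i,\varphi_i]$, hence $\hat d([c_1,\varphi_1],[c_2,\varphi_2])\le\hat d(G(c_1)\circ\psi_1,G(c_2)\circ\psi_2)$; taking infima yields $\inf_\varphi\le\inf_\psi$.

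For the reverse inequality I first reduce to a convenient minimiser. By Proposition~\ref{pr:existence} (and the normalisation used in its proof, which relies on Proposition~\ref{pr:Dinvar}) there is a minimising pair $\bar\varphi_1,\bar\varphi_2\in\bar\Gamma$ with $\bar\varphi_1'+\bar\varphi_2'=2$ a.e., and by Proposition~\ref{pr:SBVsols} one can choose $\bar g_i\in[c_i,\bar\varphi_i]\cap\SBV(I;\R^d)$ with $\inf_\varphi\hat d([c_1,\varphi_1],[c_2,\varphi_2])=\hat d(\bar g_1,\bar g_2)$. It therefore suffices to produce $\psi_1,\psi_2\in\bar\Gamma$ with $\hat S(G(c_1)\circ\psi_1,G(c_2)\circ\psi_2)\ge\hat S(\bar g_1,\bar g_2)$, and in fact I aim for equality by arranging that $G(c_i)\circ\psi_i$ is $\bar g_i$ with each of its jumps replaced by an affine traversal of the corresponding segment $[\bar g_i^\ell(z),\bar g_i^r(z)]$, the affine pieces for $i=1$ and $i=2$ being placed over common parameter intervals.

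To construct $\psi_i$, recall from Lemma~\ref{le:SBVsing} that $D^s\bar g_i$ (here purely atomic) is concentrated on $\bar\varphi_i^{-1}(\Sigma(c_i))$, and that, by the construction in the proof of Proposition~\ref{pr:SBVsols}, on each nondegenerate preimage $\bar\varphi_i^{-1}(y)$, $y\in\Sigma(c_i)$, the curve $\bar g_i$ is either an affine traversal of, or a monotone step function along, the segment $[c_i^\ell(y),c_i^r(y)]$. I define $\psi_i$ by: $\psi_i=\xi_i\circ\bar\varphi_i$ on the set where $\bar\varphi_i$ avoids $\Sigma(c_i)$; $\psi_i$ affine onto $[\xi_i^\ell(y),\xi_i^r(y)]$ on a nondegenerate $\bar\varphi_i^{-1}(y)$ where $\bar g_i$ is affine; $\psi_i$ constant on each maximal interval on which $\bar g_i$ is constant; and at each of the at most countably many jump points $z$ of $\bar g_1$ or $\bar g_2$, cut the parameter domain open and on the inserted interval (of arbitrary positive, but summable, length) let $\psi_i$ run affinely through the sub-segment of $G(c_i)$ realising the jump $[\bar g_i](z)$ if $z\in\Sigma(\bar g_i)$ and stay constant otherwise. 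The normalisation $\bar\varphi_1'+\bar\varphi_2'=2$ ensures that at a common jump $z$ at most one of $\bar\varphi_1^{-1}(\bar\varphi_1(z))$, $\bar\varphi_2^{-1}(\bar\varphi_2(z))$ is nondegenerate, so the two insertions at $z$ can be made over one and the same interval. After rescaling the enlarged parameter domain affinely back onto $[0,1]$ one checks that $\psi_i\in\bar\Gamma$ and that $G(c_i)\circ\psi_i$ is the announced jump-linearisation of $\bar g_i$.

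Finally one compares the two $\hat S$-values via Theorem~\ref{th:relax}, writing $\widehat v:=v/\abs v$. Off the insertions, $(G(c_1)\circ\psi_1,G(c_2)\circ\psi_2)$ is a simultaneous reparametrisation of the absolutely continuous parts of $(\bar g_1,\bar g_2)$, which leaves the absolutely continuous integral unchanged; an insertion at a jump of only one curve contributes $0$ to that integral, exactly as such a point contributes $0$ to the jump sum of $\hat S(\bar g_1,\bar g_2)$; and an insertion of length $\ell$ at a common jump $z$ contributes
\[
  \ell\,\inner[B]{\widehat{[\bar g_1](z)}}{\widehat{[\bar g_2](z)}}^+\sqrt{\frac{\abs{[\bar g_1](z)}}{\ell}\,\frac{\abs{[\bar g_2](z)}}{\ell}}
  =\inner[B]{\widehat{[\bar g_1](z)}}{\widehat{[\bar g_2](z)}}^+\sqrt{\abs{[\bar g_1](z)}\,\abs{[\bar g_2](z)}},
\]
i.e.\ precisely the $z$-term of the jump sum. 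Hence $\hat S(G(c_1)\circ\psi_1,G(c_2)\circ\psi_2)=\hat S(\bar g_1,\bar g_2)$, so $\inf_\psi\hat d(G(c_1)\circ\psi_1,G(c_2)\circ\psi_2)\le\hat d(\bar g_1,\bar g_2)=\inf_\varphi\hat d([c_1,\varphi_1],[c_2,\varphi_2])$, which is the remaining inequality. Attainment of the right-hand infimum in \eqref{eq:equalmax} then follows by the compactness argument of Proposition~\ref{pr:existence}, restricting to $\psi_i$ with $\psi_1'+\psi_2'=2$ (using invariance of $\hat d(G(c_1)\circ\,\cdot\,,G(c_2)\circ\,\cdot\,)$ under simultaneous reparametrisation) and the lower semi-continuity of $(\psi_1,\psi_2)\mapsto\hat d([G(c_1),\psi_1],[G(c_2),\psi_2])$ with respect to uniform convergence. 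The step I expect to be the main obstacle is the explicit construction of the $\psi_i$: verifying membership in $\bar\Gamma$ and the jump-linearisation property requires a careful case distinction according to whether the preimages $\bar\varphi_i^{-1}(y)$ are single points or intervals (and, in the interval case, whether $\bar g_i$ is affine or a step function there), together with the bookkeeping of the countably many insertions.
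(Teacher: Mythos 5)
Your overall strategy coincides with the paper's: the easy inequality via $\varphi_i := \zeta_i\circ\psi_i$ is identical, and for the reverse one you reduce to $\SBV$ minimisers $\bar g_i$ through Propositions~\ref{pr:existence} and~\ref{pr:SBVsols}, insert affine traversals at the jumps to obtain absolutely continuous curves, show $\hat S$ is preserved via Theorem~\ref{th:relax} (including the crucial cancellation of the insertion length $\ell$), and finally realise the linearised curves as $G(c_i)\circ\psi_i$. That plan, and the $\hat S$ computation, match the paper.

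There is, however, a genuine gap in your construction of $\psi_i$, and you flagged it yourself as the main obstacle. You assert, citing the proof of Proposition~\ref{pr:SBVsols}, that on a nondegenerate preimage $\bar\varphi_i^{-1}(y)=[a,b]$ the curve $\bar g_i$ is ``either an affine traversal of, or a monotone step function along'' the segment $[c_i^\ell(y),c_i^r(y)]$. This is not what that proposition gives. In the case $\int_{(a,b)}\langle v,dD g_2/d\abs{Dg_2}\rangle^+\,d\abs{Dg_2}>0$ the argument only kills the Cantor part of $Dg_1\Lres(a,b)$; the absolutely continuous part can be nonzero (indeed $\abs{Dg_1}\Lres(a,b)\ll\abs{Dg_2}\Lres(a,b)$, and $g_2$ is absolutely continuous near almost every point of $(a,b)$ since $\bar\varphi_2$ is strictly increasing there). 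So in general $\bar g_i$ on $(a,b)$ is a monotone $\SBV$ traversal with nontrivial absolutely continuous \emph{and} jump parts, there need be no maximal intervals of constancy at all, and your piecewise recipe (``affine where $\bar g_i$ is affine, constant where $\bar g_i$ is constant, insert at jumps'') simply fails to define $\psi_i$ there. The paper sidesteps this by not trying to enumerate cases: it first builds the linearisation $h_i$ through the strictly increasing map $\gamma$ (driven by $\abs{D^sg_1}+\abs{D^sg_2}$) and its left inverse $\vartheta$, which handles arbitrary singular parts at once, and then defines $\psi_i$ on each $(\varphi_i\circ\vartheta)^{-1}(y)$ via a progress parameter $\lambda(x)$ tracking how far $h_i$ has travelled along $[c_i^\ell(y),c_i^r(y)]$, finally checking that the resulting $\psi_i$ is continuous and in $\SBV(I;\R)$, hence absolutely continuous. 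That last verification ($\psi_i\in\bar\Gamma$, in particular absolute continuity) is also only asserted, not argued, in your sketch. A second, smaller, inaccuracy: you invoke $\bar\varphi_1'+\bar\varphi_2'=2$ to claim that at a common jump $z$ at most one of the two preimages $\bar\varphi_i^{-1}(\bar\varphi_i(z))$ is nondegenerate; this is irrelevant to making the two insertions compatible (they are inserted at the same parameter value $z$ in any case) and only holds for interior $z$, so it should be dropped.
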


\begin{proof}
  We start by recalling the construction of the functions $G(c_i)$
  in Section~\ref{se:shapedist}:
  We set 
  \[
    \xi_i(x) = \frac{\lvert D^s c_i \rvert(0,x)}{2\len(c_i)} + (1-\alpha_i)x
    \qquad\qquad\text{ with }
    \alpha_i := \frac{\lvert D^s c_i\rvert(I)}{2\len(c_i)}.
  \]
  Moreover, we denote by $\zeta_i \colon I \to I$ the non-decreasing
  left inverse of $\xi_i$. Then the functions $G(c_i)$ satisfy
  $G(c_i)(x) = c_i(\zeta_i(x))$
  for $x \not \in \zeta(\Sigma(c_i))$ and $G(c_i)(\xi_i(x)) = c_i(x)$ for $x \not\in \Sigma(c_i)$.
  Moreover, we have $G(c_i) \in [c_i,\zeta_i]$.
  
  Assume now that $(\psi_1,\psi_2) \in \bar{\Gamma}$.
  Define $\varphi_i = \zeta_i \circ \psi_i$. Since $G(c_i) \in [c_i,\zeta_i]$,
  it follows that $G(c_i) \circ \psi_i \in [c_i,\varphi_i]$.
  Thus
  \[
    \hat{d}([c_1,\varphi_1],[c_2,\varphi_2]) \le \hat{d}(G(c_1)\circ\psi_1,G(c_2)\circ\psi_2).
  \]
  Since $(\psi_1,\psi_2)$ was arbitrary, this shows that the inequality $\le$ holds in~\eqref{eq:equalmax}.

  Now assume that the maximum of $\hat{d}([c_1,\cdot],[c_2,\cdot])$ is attained at
  $(\varphi_1,\varphi_2) \in \bar{\Gamma}^2$. In view of the proof
  of Proposition~\ref{pr:existence}, we may assume without
  loss of generality that $\varphi_1'+\varphi_2' = 2$ almost everywhere in $I$.
  Since $c_1$, $c_2 \in \SBV(I;\R^d)$, there exist by
  Proposition~\ref{pr:SBVsols} functions $g_1$, $g_2 \in \SBV(I;\R^d)$
  such that $g_i \in [c_i,\varphi_i]$ and $\hat{d}([c_1,\varphi_1],[c_2,\varphi_2]) = \hat{d}(g_1,g_2)$.

  We will now construct functions $h_1$, $h_2 \in \AC(I;\R^d)$ in such a way that
  $\hat{d}(g_1,g_2) = \hat{d}(h_1,h_2)$.
  The construction is similar as for $G(c_i)$, but we have to be careful to
  keep the function value of $\hat{d}$ unchanged. We denote therefore
  \[
    \beta = \frac{\abs{D^s g_1}(I) + \abs{D^s g_2}(I)}{2(\len(c_1)+\len(c_2))}
  \]
  and define the function $\gamma \colon I \to I$,
  \[
    \gamma(x) = \frac{\abs{D^s g_1}(0,x) + \abs{D^s g_2}(0,x)}{2(\len(c_1)+\len(c_2))} + (1-\beta)x.
  \]
  Next we denote by $\vartheta \colon I \to I$ the non-decreasing
  left inverse of $\gamma$, and define the functions $h_i \in \AC(I;\R^d)$
  by $h_i(x) = g_i(\vartheta(x))$ if $\gamma$ (and thus $g_i$) is continuous at $\vartheta(x)$ and
  \[
    h_i(x) = g_i^{\ell}(\vartheta(x)) + \frac{x-\gamma^{\ell}(\vartheta(x))}{\gamma^{r}(\vartheta(x))-\gamma^{\ell}(\vartheta(x))} \bigl(g_i^{r}(\vartheta(x))-g_i^{\ell}(\vartheta(x))\bigr)
  \]
  else.

  Now assume that $x \in I$ is such that $y := \vartheta(x) \in \Sigma(g_1)\cup\Sigma(g_2)$.
  Then we have by construction of $h_i$ that
  \[
    \dot{h}_i(x) = \frac{g_i^{r}(y)-g_i^{\ell}(y)}{\gamma^{r}(y)-\gamma^{\ell}(y)}
    = \frac{[g_i](y)}{[\gamma](y)}.
  \]
  Thus
  \[
    \begin{aligned}
      \hat{S}(h_1,h_2) &= \int_I \Bigl\langle \frac{\dot{h}_1}{\abs{\dot{h}_1}},\frac{\dot{h}_2}{\abs{\dot{h}_2}}\Bigr\rangle^+
      \sqrt{\abs{\dot{h}_1}\abs{\dot{h}_2}}\,dx \\
      &= \int_{\gamma(I\setminus (\Sigma(g_1)\cup\Sigma(g_2)))}
      \Bigl\langle \frac{\dot{g}_1\circ{\vartheta}}{\abs{\dot{g}_1\circ{\vartheta}}},
      \frac{\dot{g}_2\circ{\vartheta}}{\abs{\dot{g}_2\circ{\vartheta}}}\Bigr\rangle^+
      \sqrt{\abs{\dot{g}_1\circ{\vartheta}}\abs{\dot{g}_2\circ\vartheta}}\,\abs{\vartheta'}\,dx\\
      &\qquad + \sum_{y\in\Sigma(g_1)\cup \Sigma(g_2)} \int_{\gamma^{\ell}(y)}^{\gamma^{r}(y)}
      \Bigl\langle \frac{[g_1](y)}{\abs{[g_1](y)}},\frac{[g_2](y)}{\abs{[g_2](y)}}\Bigr\rangle^+\frac{\sqrt{\abs{[g_1](y)}\,\abs{[g_2](y)}}}{\abs{[\gamma](y)}}\,dx\\
      &= \int_{I\setminus (\Sigma(g_1)\cup\Sigma(g_2))}\Bigl\langle \frac{\dot{g}_1}{\abs{\dot{g}_1}},\frac{\dot{g}_2}{\abs{\dot{g}_2}}\Bigr\rangle^+ \sqrt{\abs{\dot{g}_1}\,\abs{\dot{g}_2}}\,dx\\
      &\qquad + \sum_{y\in\Sigma(g_1)\cup\Sigma(g_2)} \Bigl\langle \frac{[g_1](y)}{\abs{[g_1](y)}},\frac{[g_2](y)}{\abs{[g_2](y)}}\Bigr\rangle^+\sqrt{\abs{[g_1](y)}\,\abs{[g_2](y)}}\\
      &= \hat{S}(g_1,g_2).
    \end{aligned}
  \]
  Since $\len(g_i) = \len(h_i)$, this implies that also $\hat{d}(h_1,h_2) = \hat{d}(g_1,g_2)$.  
  We will next construct functions $\psi_i \in \bar{\Gamma}$
  such that $h_i = G(c_i) \circ \psi_i$.

  We start by defining the function $\hat{\psi}_i := \xi_i \circ \varphi_i \circ \vartheta$.
  Since $\vartheta$ is Lipschitz, $\varphi_i$ is absolutely continuous,
  and $\xi_i \in \SBV(I;\R)$, it follows that $\hat{\psi}_i \in \SBV(I;\R)$ as well.
  Moreover we have that $G(c_i)(\hat{\psi}_i(x)) = h_i(x)$
  for every $x \in I$ such that $\varphi_i \circ \vartheta(x) \not \in \Sigma(c_i)$.
  We may thus define $\psi(x) := \hat{\psi}(x)$ for $x \in I \setminus (\varphi_i\circ\vartheta)^{-1}(\Sigma(c_i))$.
  
  Now let $y \in \Sigma(c_i)$ and denote $[a,b] := (\varphi_i \circ \vartheta)^{-1}(y)$.
  Let moreover $x \in [a,b]$.
  Then $h_i(x)$ lies on the line segment $[c_i^{\ell}(y),c_i^{r}(y)]$,
  that is, we can write
  \[
    h_i(x) = \lambda(x) c_i^{r}(y) + (1-\lambda(x)) c_i^{\ell}(y)
    = c_i^{\ell}(y) + \lambda(x)\,[c_i](y)
  \]
  for some $0 \le \lambda(x) \le 1$.
  Moreover, since $h_i$ is absolutely continuous, it follows that
  the mapping $x \mapsto \lambda(x)$ is absolutely continuous.
  Define now
  \[
    \psi_i(x) = \xi^{\ell}(y) + \lambda(x)\, [\xi_i](y).
  \]
  Then $\zeta_i(\psi_i(x)) = y$ and thus
  \[
    G(c_i)(\psi_i(x))
    = c_i^{\ell}(y) + \frac{\psi_i(x)-\xi^{\ell}(y)}{[\xi_i](y)} [c_i](y)
    = c_i^{\ell}(y) + \lambda(x)\,[c_i](y) = h_i(x).
  \]
  
  By construction we have that $\psi_i \colon I \to I$ is non-decreasing and
  $G(c_i)\circ \psi_i = h_i$.
  Since the restriction of $\psi_i$ to $(\varphi_i \circ\vartheta)^{-1}(y)$
  is absolutely continuous for each $y \in \Sigma(c_i)$
  and the restriction of $\psi_i$ to $I\setminus (\varphi_i\circ\vartheta)^{-1}(\Sigma(c_i))$
  is in $\SBV(I;\R)$, it follows that $\psi_i \in \SBV(I;\R)$ as well.
  Since in addition $\psi_i$ is continuous, it follows that it is actually
  absolutely continuous and therefore contained in $\bar{\Gamma}$.

  This proves the assertion.
\end{proof}

\begin{proof}[Proof of Theorem~\ref{th:main2}]
  By Proposition~\ref{pr:existence}, both of the infima in~\eqref{eq:sdistSBV}
  are attained at some $\bar{\varphi}_i$, $\bar{\psi}_i \in \bar{\Gamma}$.
  Moreover, by Proposition~\ref{pr:equalmax} we have that
  \[
    \inf_{\varphi_i \in \bar{\Gamma}} \hat{d}([c_1,\varphi_1],[c_2,\varphi_2])
    = \inf_{\psi_i \in \bar{\Gamma}} \hat{d}(G(c_1)\circ\psi_1,G(c_2)\circ\psi_2).
  \]
  It remains to show that this is further equal to
  \[
    \hat{d}^S([c_1],[c_2]) := \inf_{g_i \sim c_i} \hat{d}(g_1,g_2).
  \]
  Assume therefore that $g_i \in [c_i]$.
  Denote by $h_i \in \AC(I;\R^d)$ the constant length parametrisation of $G(g_i)$.
  Then we can write $G(g_i) = h_i \circ \vartheta_i$ for some $\vartheta_i \in \bar{\Gamma}$.
  Since $c_i \sim g_i$, it follows that
  $h_i$ is also the constant length parametrisation of $G(c_i)$.
  Now, since $\dot{c}_i \neq 0$ almost everywhere, it follows that
  also $G(c_i)' \neq 0$ almost everywhere, and thus we can write $h_i = G(c_i) \circ \eta_i$
  for some $\eta_i \in \bar{\Gamma}$. Thus $G(g_i) = G(c_i) \circ \eta_i \circ \vartheta_i$
  and thus
  \[
    \hat{d}(g_1,g_2) \ge \inf_{\psi_i \in\bar{\Gamma}} \hat{d}(G(c_1)\circ \psi_1, G(c_2)\circ \psi_2).
  \]
  Since this holds for every $g_i \in [c_i]$ and since $G(c_i) \circ \psi_i \in [c_i]$,
  the assertion follows.
\end{proof}

\section*{Acknowlegdements}

I would like to thank Esten Nicolai W{\o}ien for valuable comments and helpful discussions.

\def\cprime{$'$} \providecommand{\noopsort}[1]{}\def\cprime{$'$}

\end{document}